
\documentclass[11pt, a4paper]{amsart}

\usepackage[left=3cm, top=3cm,bottom=3cm,right=3cm]{geometry}

\usepackage[utf8]{inputenc}
\usepackage{lmodern}
\usepackage{amssymb,amsfonts,amsthm,amsmath,bbm,mathrsfs,aliascnt,mathtools}
\usepackage[english]{babel}
\usepackage[colorlinks]{hyperref}

\usepackage{ulem}
\usepackage{nicefrac}



\theoremstyle{plain}
\newtheorem{thm}{Theorem}[section]
\newtheorem{cor}[thm]{Corollary}
\newtheorem{lem}[thm]{Lemma}
\newtheorem{prop}[thm]{Proposition}


\theoremstyle{definition}

\theoremstyle{remark}

\DeclareMathOperator{\Hom}{Homeo}

\DeclareMathOperator{\supp}{supp}
\DeclareMathOperator{\card}{Card}

\newcommand{\uA}{\underline{A}}

\newcommand{\caseA}{${\mathfrak{a}}$}
\newcommand{\caseB}{$\mathfrak{b}$}

\newcommand{\I}{\mathcal I}
\newcommand{\E}{\mathbb E}

\newcommand{\R}{\mathbb R}
\newcommand{\Z}{\mathbb Z}
\newcommand{\N}{\mathbb N}

\renewcommand{\P}{\mathbb P}

\newcommand{\g}{\mathbf{g}}
\newcommand{\ug}{\underline{g}}
\newcommand{\HH}{\mathcal H}
\newcommand{\II}{\mathcal I}

\newcommand{\EE}{\mathcal E}
\newcommand{\FF}{\mathcal F}

\newcommand{\ux}{\underline{x}}

\newcommand{\Homr}{\Hom^+(\mathbb R)}

\newcommand{\ps}[1]{\hspace{-1.2pt}{}^{{\scalebox{.62}{ $#1$}}}\hspace{-2.2pt}}
\newcommand{\Aplus}{\ps{{+}}A}
\newcommand{\Aminus}{\ps{{-}}A}
\newcommand{\Yplus}{\ps{{+}}\, Y}
\newcommand{\Yminus}{\ps{{-}}\,Y}

\newcommand{\Cont}{$\mathfrak{C}$}
\newcommand{\Rec}{$\mathfrak{R}$}
\newcommand{\UB}{$\mathfrak{U}$}

\DeclareMathOperator*\uplim{\overline{lim}}

\def\d{{\rm d}}

\begin{document}

\title{On uniqueness of invariant measures for random walks on ${\textup {HOMEO}}^+(\mathbb R)$.}

\author{Sara Brofferio}
\address{Sara Brofferio, Laboratoire de Math\'ematiques, Universit\'e Paris-Sacley, Campus d’Orsay, France.}
\email{sara.brofferio@gmail.com}

\author{Dariusz Buraczewski}
\address{Dariusz Buraczewski, Mathematical Institute University of Wroclaw, Pl. Grunwaldzki 2/4
50-384 Wroclaw, Poland}\email{dariusz.buraczewski@uwr.edu.pl}

\author{Tomasz Szarek}
\address{Tomasz Szarek, Institute of Mathematics Polish Academy of Sciences, Abrahama 18, 81-967 Sopot, Poland}
\email{tszarek@impan.pl}

\maketitle

\begin{abstract}
We consider random walks  on the group  of
orientation-preserving homeomorphisms of the real line $\R$.
In particular, the fundamental question of uniqueness
of an invariant measure of the generated process is raised.
This problem was already studied  by Choquet
and Deny (1960) in the context of random walks generated by translations of the line.
Nowadays the answer is quite well understood in general settings
of strongly contractive systems. Here we focus on broader class of systems
satisfying the conditions: recurrence, contraction and unbounded action. We prove that under these 
conditions the random process possesses a unique invariant Radon measure on $\R$.
Our work can be viewed as a subsequent paper of Babillot  et al. (1997)
and Deroin et al. (2013).
\end{abstract}

\noindent \textbf{Keywords:} Random walks, group of homeomorphisms, invariant measure, ergodic measure.

\noindent \textbf{MSC 2020 subject classifications:} 60J05, 37H99, 37A05.



\section{Introduction}

Let $\Hom^+(\mathbb R)$ denotes the group of orientation-preserving homeomorphisms  of the real line $\mathbb R$.
We shall consider the (left) random walk on  $\Hom^+(\mathbb R)$, that is the sequence of random homeomorphims
$$
\ell_n:=\g_n\cdots \g_1
$$
obtained by iterated composition products of a sequence $(\g_n)_{n\in\N}$  of independent identically distributed  $\Hom^+(\mathbb R)$--valued random variables. We denote by $\mu$ the common distribution of the $\g_n$'s. We shall always assume that $\mu$  is a \textit{discrete} probability measure on $\Homr$.
This sequence of random transformations induces a \textit{Stochastic Dynamical System} (or {\it Iterated Random Function Systems}) on the real line, that is the Markov chain  $(X_n^x)_{n\in\N}$ defined recursively
for any starting value   $X_0^x=x\in\mathbb R$ by the formula:
\begin{equation*}
X_n^x:= \g_n(X_{n-1}^x)=\ell_n(x)\quad \mbox{for $n\ge 1$}.
\end{equation*}
The associated Markov kernel   is of the form
\begin{equation*}
Pf(x):=\sum_{g\in\Gamma} f\left(g (x)\right)\mu(g)\quad \mbox{for any bounded Borel-measurable function $f$ on $\R$.}
\end{equation*}
Here $\Gamma$ denotes the discrete support of $\mu$, that is $\Gamma:=\{g\in\Homr|\ \mu(g)>0\}$.

 We are interested in the case when the Markov chain $(X_n^x)_{n\in\mathbb N}$ does not escape to infinity. Namely, we  always suppose that the following hypothesis is satisfied:
\begin{itemize}
	\item[($\mathfrak{R}$)] The Markov chain  is (uniformly topologically) {\it recurrent}, that is   there exists a~compact interval $\I\subset\mathbb R$ such that for every $x\in\mathbb R$ the sequence $(X_n^x)_{n\in\mathbb N}$ visits $\I$ infinitely often a.s.
\end{itemize}
Condition ($\mathfrak{R}$) entails immediately that there exists an {\it invariant Radon measure} $\nu$ for the System generated by $\mu$, i.e.~a  measure,  finite on compact sets,  satisfying
$$
\int_\R f(x)\d\nu( x) = \int_{\R}Pf(x)\d\nu( x)
$$
for any $f\in C_C(\R)$, the space of continuous functions with compact support.
This measure can be either finite or infinite.

The fundamental question of this paper is to decide whether an invariant measure is unique up to a multiplicative constant. This problem has been widely studied for different kind of systems: the now classical Choquet-Deny theorem (\cite{Choquet:Deny}) can been seen as one of~the first results in this direction. It says that the Lebesgue measure on $\R$ is the unique Radon invariant measure for systems generated by translations that are recurrent  and do not have discrete orbits. Among other interesting results we would like to mention fundamental works on strongly contractive systems initiated  by H. Furstenberg \cite{Furstenberg},
see also \cite{Diaconis:Freedman,Letac,Peigne:Woess2}. In these works, under various contracting assumptions, it was proved that there exists a unique invariant probability measure.

A weaker contraction property (called local stability) has been proposed to deal with systems that have infinite Radon invariant measures. At first this property was used by Babillot, Bougerol and Elie \cite{Babillot:Bougerol:Elie} in the case of systems generated by centred random affinities. Next it was studied in much more general setting by M. Benda \cite{Benda}, M. Peigné and W. Woess \cite{Peigne:Woess1}, and B. Deroin, V. Kleptsyn, A. Navas, K. Parwani  \cite{Deroin:Kleptsyn:Navas:Parwani:2013}.
The latter paper contains a detailed study of the uniqueness of an invariant measure for random walks on $\Homr$ under the hypothesis of the measure $\mu$ being symmetric.
Using a~conjugation of the reals to some open interval, say $(0, 1)$,  we obtain some results for random walks on the group of orientation preserving homeomorphisms of the interval $(0, 1)$.
 At~first such walks were considered by L. Alsed\'a and M. Misiurewicz who studied some function systems consisting of piecewise linear homeomorphisms and proved the existence of a unique probability measure (see \cite{alseda_misiurewicz}). More general function systems were investigated by M. Gharaei and A. I. Homburg in \cite{Gharaei_Homburg}.
Recently D. Malicet obtained unique ergodicity as a consequence of the contraction principle for time homogeneous random walks on the topological group of homeomorphisms defined on the circle and interval  (see \cite{malicet}). His proof, in turn, is based upon an invariance principle of A. Avila and M. Viana (see \cite{Avila_Viana}). A  simple proof of unique ergodicity on the open interval $(0, 1)$ for a wide class of iterated function systems is given in \cite{Czudek_Szarek}.

The main goal of this paper is to show that the uniqueness of an invariant measure can be obtained  assuming, besides recurrence, the following two conditions that only involve the action of $\Gamma$ (the  support of $\mu$) on $\R$:
\begin{itemize}
\item[($\mathfrak{C}$)] \textbf{Contraction (or proximality) of the action.} There exists an interval $\I\subset\mathbb R$ such that for any compact set $K\subset\mathbb R$ there is some $g$ belonging to the semigroup generated by  $\Gamma$ such that $g(K)\subset \I$.
	\item[($\mathfrak{U}$)] \textbf{Unboundedness of the action.}  For every $x\in\mathbb R$ we have $g_1(x)<x< g_2(x)$ for some $g_1, g_2\in \Gamma$.
\end{itemize}	
The first of this conditions says that it is possible to shrink any bounded set at finite distance.  We will see that the second condition is equivalent to  the question of whether one can reach $+\infty$ and $-\infty$ from any starting point  $x$.

From now on, uniqueness will mean the existence of a unique, up to a scalar factor, invariant Radon measure.
The main purpose of the paper is to prove that under the above conditions the invariant measure is unique:

\begin{thm}\label{thm:main} Assume that a Stochastic Dynamical System, generated by a discrete distribution  $\mu$ on $\Homr$, satisfies assumptions ($\mathfrak{R}$), ($\mathfrak{C}$) and  ($\mathfrak{U}$). Then the System admits a unique invariant Radon measure $\nu$.
\end{thm}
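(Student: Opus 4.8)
The plan is to deduce the theorem from a ratio ergodic theorem, using the contraction hypothesis to render the ergodic averages insensitive to the starting point. It is convenient to conjugate $\R$ with the open interval $(0,1)$, so that the escape to $\pm\infty$ governed by $(\mathfrak U)$ becomes an approach to the endpoints $0$ and $1$; under this conjugation an invariant Radon measure on $\R$ corresponds to a measure on $(0,1)$ that is finite on every compact subinterval but may be infinite near $0$ and near $1$. Recall that every element of $\Homr$ is increasing, so $x\mapsto X_n^x$ is increasing and $\ell_n([x,y])=[X_n^x,X_n^y]$ for $x<y$. Since an invariant Radon measure exists by $(\mathfrak R)$, uniqueness will follow once we establish that (i) every invariant Radon measure is ergodic for $(X_n)$ and (ii) any two ergodic invariant Radon measures are proportional: indeed, if some invariant Radon $\nu$ were not ergodic, an invariant set $A$ would split it into two nonzero, non-proportional invariant Radon measures $\nu|_A$ and $\nu|_{A^c}$, contradicting (i), and (ii) then identifies all of them.

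\textbf{Step 1: a contraction lemma.} First I would show that, combining $(\mathfrak R)$ with $(\mathfrak C)$, for almost every $\omega$ and every $x<y$ one has $\liminf_{n\to\infty}(X_n^y-X_n^x)=0$. The mechanism is that $(\mathfrak R)$ forces the two-point trajectory $(X_n^x,X_n^y)$ to return infinitely often to a fixed compact region of $\{a\le b\}$: each coordinate visits $\I$ infinitely often, and $(\mathfrak C)$, applied to large compacts, prevents the interval $[X_n^x,X_n^y]$ from drifting to infinity while remaining long. At each such return the word furnished by $(\mathfrak C)$, which carries a fixed large compact $\I'\supset\I$ into $\I$, occurs with probability bounded below because $\mu$ is discrete; a conditional Borel--Cantelli argument then maps the interval into $\I$ infinitely often, so $\liminf_n\diam\ell_n([x,y])\le\diam\I$, and iterating the construction with progressively smaller target intervals forces the $\liminf$ to vanish.

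\textbf{Steps 2 and 3: ergodicity and the ratio ergodic theorem.} Condition $(\mathfrak R)$ makes $(X_n)$ conservative with respect to any invariant Radon measure $\nu$, while $(\mathfrak U)$ rules out proper invariant subintervals by making every orbit cofinal towards both endpoints, and the contraction of Step 1 trivializes the invariant $\sigma$-field; together these give ergodicity of $\nu$, hence (i). Hopf's ratio ergodic theorem then yields, for $f,h\in C_C(\R)$ with $h\ge 0$ and $\nu(h)>0$,
$$
\frac{\sum_{k=0}^{n-1}f(X_k^x)}{\sum_{k=0}^{n-1}h(X_k^x)}\ \xrightarrow[\ n\to\infty\ ]{}\ \frac{\nu(f)}{\nu(h)}\qquad\text{for }\nu\text{-a.e.\ }x\text{ and a.s.,}
$$
where the denominators tend to $\infty$ once $h$ is positive on a recurrent interval. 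For (ii), take ergodic invariant Radon measures $\nu_1,\nu_2$ and run the coupled two-point chain $(X_n^x,X_n^y)$ driven by a common $\omega$ with $x\sim\nu_1$, $y\sim\nu_2$. Step 1 gives a random subsequence along which $X_n^x$ and $X_n^y$ are arbitrarily close; choosing $h$ with $\supp f\subset\{h\ge\delta\}$, the goal is to compare the numerators and the denominators for the two starting points along this subsequence and conclude that the partial sums are asymptotically equivalent relative to the divergent denominators, so that the two Hopf limits coincide: $\nu_1(f)/\nu_1(h)=\nu_2(f)/\nu_2(h)$. As $f$ and $h$ are arbitrary in $C_C(\R)$, this forces $\nu_1=c\,\nu_2$.

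\textbf{Main obstacle.} I expect the crux to be the last step. The closeness of the two trajectories produced in Step 1 is only \emph{subsequential}, and necessarily so: the two-point chain spends an asymptotically vanishing fraction of time near the diagonal, just as a recurrent random walk spends a vanishing fraction of time near the origin, so one cannot simply invoke a Ces\`aro--Toeplitz estimate. Making Step 3 rigorous demands a quantitative control of the discrepancy between the two partial sums accumulated between consecutive contraction epochs --- for instance through a renewal-type decomposition of the trajectory at the times $[X_n^x,X_n^y]$ is shrunk into $\I$ --- or, alternatively, a soft argument showing that the Hopf limit, as a bounded function invariant for the two-point chain, is trivial on the support of $\nu_1\otimes\nu_2$. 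The second delicate point is Step 1 itself, together with the ergodicity claim, where the interplay of $(\mathfrak R)$, $(\mathfrak C)$ and $(\mathfrak U)$ must be used carefully to exclude the interval escaping to infinity while staying long and to exclude proper invariant subsets.
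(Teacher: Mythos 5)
The decisive gap is your Step 1. From $(\mathfrak R)$ and $(\mathfrak C)$ you cannot conclude $\liminf_n (X_n^y-X_n^x)=0$ a.s., for two reasons. First, $(\mathfrak R)$ gives infinitely many returns of each coordinate separately to $\I$, but not simultaneous returns of the pair $(X_n^x,X_n^y)$ to a fixed compact of $\R^2$: since $X_n^x<X_n^y$, at times when the lower coordinate is in $\I$ the upper one may be arbitrarily high and vice versa, and nothing in $(\mathfrak R)$, $(\mathfrak C)$, $(\mathfrak U)$ excludes that; the paper itself flags exactly this failure of joint recurrence in the proof of Lemma \ref{lem:condAL}(2)(b), where each of the two bounding affine recursions is recurrent but the joint process need not be. Second, even granting infinitely many simultaneous returns and the conditional Borel--Cantelli step, $(\mathfrak C)$ furnishes one \emph{fixed} interval $\I$: compressing $[X_n^x,X_n^y]$ into $\I$ infinitely often only yields $\liminf_n\diam \ell_n([x,y])\le\diam\I$, and $(\mathfrak C)$ provides no ``progressively smaller target intervals'', so the claimed vanishing of the $\liminf$ is unsupported. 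Since Step 1 also underlies your claim (i) (which is anyway stronger than needed: one never has to show that \emph{every} invariant Radon measure is ergodic; the ergodic decomposition for conservative Markov--Feller processes, Theorem \ref{Thm: erg dec}, reduces uniqueness to comparing ergodic measures) and underlies the coupling in Step 3, the proposal does not close; you yourself identify the remaining crux, namely that subsequential closeness does not control ratios of Birkhoff sums.

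For comparison, the paper's route avoids pathwise contraction altogether --- dispensing with such a ``local stability'' hypothesis is precisely its point. Uniqueness is reduced, via ergodic decomposition plus a pigeonhole argument showing that under $(\mathfrak C)$ and $(\mathfrak U)$ any two nonempty closed $\Gamma$-invariant sets intersect (Lemma \ref{lem:two invariant sets}), to comparing two ergodic measures with nested, non-discrete supports; note that your Step 3 implicitly needs overlapping supports anyway, so that both denominators $\nu_1(h)$ and $\nu_2(h)$ are positive, and this is exactly what $(\mathfrak C)$ and $(\mathfrak U)$ deliver. The comparison itself (Theorem \ref{thm:main2}) rests on a dichotomy for the reversed kernel $\widehat P$: if some interval $J$ meeting $\supp\nu_1$ in two points is visited only finitely often by the reversed walk started from an unbounded set of points, a purely order-theoretic comparison of indicator sums at two starting points in $J$ identifies the Chacon--Ornstein ratios (Proposition \ref{prop: case 2}); otherwise $\widehat P$ is conservative, hence admits an invariant Radon measure $\widehat\nu$, and closeness of $X_n^x$ and $X_n^y$ is measured by the a.s.\ convergent martingale $\widehat\nu[X_n^x,X_n^y)$, which gives smallness for \emph{all} large $n$ on an event of probability at least $p$ (not merely along a subsequence), while the discrepancy of Birkhoff sums is bounded by visits to $\varepsilon$-neighbourhoods of the endpoints and removed by letting $\varepsilon\to0$ after atoms are excluded (Lemma \ref{lem: appr nu hat }, Proposition \ref{prop: case 1}, Lemma \ref{lem-nu-atom}). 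To salvage your plan you would need either a proof of joint recurrence together with genuine (not merely into-$\I$) contraction from $(\mathfrak R)$, $(\mathfrak C)$, $(\mathfrak U)$, or the quantitative renewal/martingale control you allude to; the paper's reversed-measure martingale is precisely such a device.
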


The study of invariant measures is strictly related to the issue of  closed $\Gamma$-invariant sets, that is, closed sets  $M\subseteq\R$ such that $g M\subseteq M$ for all $g\in \Gamma$. In fact, for any invariant measure $\nu$ its support $\supp\nu$ is a closed $\Gamma$-invariant subset of $\R$. One of the crucial questions that the present paper  explores is  whether a closed $\Gamma$-invariant set can be contained in the support of  different invariant ergodic measures. Recall that an invariant measure $\nu$ is {\it ergodic} if for any $A\subseteq \R$ such that $\nu_A$, the restriction of $\nu$ to $A$, is invariant,  we obtain that either $\nu_A=\nu$ or $\nu_A \equiv 0$.
The following theorem gives a quite complete answer to this question under the recurrence and unboundedness hypotheses only.
	\begin{thm}\label{thm:main2}
	Assume that a Stochastic Dynamical System, generated by a discrete distribution  $\mu$ on $\Homr$, satisfies assumptions ($\mathfrak{R}$) and ($\mathfrak{U}$).
		\begin{enumerate}\item Let $\nu_1$ and $\nu_2$ be two ergodic invariant Radon measures such that $\supp\nu_1 \subseteq\supp\nu_2$ and $\supp\nu_1$ is not discrete. Then $\nu_1=C\nu_2$ for some constant $C>0$.
			\item The support of every ergodic invariant Radon measure $\nu$ is either minimal among the closed $\Gamma$-invariant sets or it contains a $\Gamma$-invariant discrete set.
			\item For any minimal  closed $\Gamma$-invariant set $M$ there exists a unique ergodic invariant Radon measure $\nu$ such $M=\supp\nu$.
		\end{enumerate}
	\end{thm}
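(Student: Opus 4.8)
The plan is to build the whole statement on a single analytic device: a comparison of two invariant measures through their "distribution functions". Given an ergodic invariant Radon measure $\nu$ with non-discrete support, fix a reference point and define $F_\nu(x) = \nu([x_0,x))$ (with the obvious sign convention for $x<x_0$); this is a non-decreasing, possibly unbounded function, continuous precisely where $\nu$ has no atoms, and it conjugates the $\Gamma$-action on $\supp\nu$ to an action by translations on $F_\nu(\R)$ — this is the classical observation underlying Deroin--Kleptsyn--Navas--Parwani and Babillot--Bougerol--Elie. Under hypothesis ($\mathfrak{U}$), reaching $\pm\infty$ from any point is possible, so the conjugated action is by translations with no common fixed point; combined with recurrence ($\mathfrak{R}$), a Choquet--Deny type argument then forces the relevant cocycle to be, in a suitable sense, degenerate. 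The first and main technical step, therefore, is to set up this conjugacy carefully on the (possibly Cantor) set $\supp\nu$, fill in the gaps by affine interpolation, and verify that recurrence of $(X_n^x)$ transfers to recurrence of the pushed-forward walk on $\R$ so that Choquet--Deny applies.

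For part (1), assume $\supp\nu_1\subseteq\supp\nu_2$ with $\supp\nu_1$ non-discrete. The idea is to compare $F_{\nu_1}$ and $F_{\nu_2}$: both conjugate the $\Gamma$-action to translation actions, so $F_{\nu_1}\circ F_{\nu_2}^{-1}$ (defined on $F_{\nu_2}(\supp\nu_1)$) intertwines two translation cocycles over the same recurrent walk. A Radon--Nikodym/ergodicity argument — using that the Radon--Nikodym derivative $\d\nu_1/\d\nu_2$ restricted to $\supp\nu_1$ is $\Gamma$-invariant, hence constant by ergodicity of $\nu_2$ on that set, once we know $\nu_1\ll\nu_2$ there — yields $\nu_1 = C\nu_2$. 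The point where one must be careful is that a priori $\nu_1$ need not be absolutely continuous with respect to $\nu_2$; establishing $\nu_1\ll\nu_2$ on $\supp\nu_1$ is where the non-discreteness of $\supp\nu_1$ and the contraction-free Choquet--Deny input (via ($\mathfrak{U}$) and ($\mathfrak{R}$)) do the real work, by ruling out that $\nu_1$ charges a $\nu_2$-null $\Gamma$-invariant set.

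Part (2) follows by dichotomy: let $\nu$ be ergodic invariant, $M=\supp\nu$, and suppose $M$ is not minimal, so it properly contains a non-empty closed $\Gamma$-invariant set $M'$. If $M'$ can be chosen non-discrete, pick any ergodic invariant Radon measure $\nu'$ with $\supp\nu'\subseteq M'\subsetneq M$ (such $\nu'$ exists by recurrence applied to the sub-system on $M'$); then part (1) gives $\nu'=C\nu$, forcing $\supp\nu'=M$, a contradiction. Hence every proper closed $\Gamma$-invariant subset of $M$ is discrete, which is the assertion. For part (3), given a minimal closed $\Gamma$-invariant set $M$, existence of an ergodic invariant Radon measure supported exactly on $M$ comes again from ($\mathfrak{R}$) (restrict the walk to $M$, extract an invariant Radon measure, pass to an ergodic component; minimality forces its support to be all of $M$). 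Uniqueness is precisely part (1): two such measures have equal, non-discrete support, so they are proportional. I expect the hard part to be the first step — making the conjugacy-to-translations rigorous when $\supp\nu$ is totally disconnected and the homeomorphisms are merely continuous (no regularity), and checking that recurrence survives the push-forward so that the Choquet--Deny theorem for random walks on $\R$ can be invoked to kill the cocycle.
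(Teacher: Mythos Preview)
Your core analytic device is mistaken. You claim that $F_\nu(x)=\nu([x_0,x))$ ``conjugates the $\Gamma$-action on $\supp\nu$ to an action by translations''. That would require $F_\nu(g(x))-F_\nu(x)$ to be independent of $x$ for each $g\in\Gamma$, i.e.\ $g_*\nu=\nu$ for every $g$ individually. But $\mu$-invariance only says $\sum_{g\in\Gamma}\mu(g)\,g_*\nu=\nu$, which is a far weaker averaged condition; in general no single $g\in\Gamma$ preserves $\nu$. Consequently the pushed-forward action is \emph{not} by translations, Choquet--Deny has nothing to bite on, and the subsequent Radon--Nikodym argument never gets off the ground. (The related idea in Deroin--Kleptsyn--Navas--Parwani is different: one uses a $\widehat\mu$-invariant measure $\widehat\nu$ so that $\widehat\nu[X_n^x,X_n^y)$ is a \emph{martingale}, not that the action becomes a translation.) Since your proof of (1) rests entirely on this conjugacy, it collapses; and the absolute continuity $\nu_1\ll\nu_2$ you hope to extract is in fact false in general---distinct ergodic measures are mutually singular---so proportionality cannot come from a Radon--Nikodym derivative.

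The paper's route is quite different. It proves (1) by a dichotomy on the reversed walk: either some interval $J$ meeting $\supp\nu_1$ in two points has $\sum_k\widehat P^k{\bf 1}_J$ finite somewhere (then a direct Chacon--Ornstein comparison of partial sums $S_n{\bf 1}_{[a,z)}(x_i)$ for two nearby points $x_1,x_2$ yields $\nu_1[a,z)/\nu_1[a,b)=\nu_2[a,z)/\nu_2[a,b)$), or else $\widehat P$ is topologically conservative, so a $\widehat\mu$-invariant Radon measure $\widehat\nu$ exists with $\supp\nu_1\subseteq\supp\widehat\nu$, and the martingale $\widehat\nu[X_n^x,X_n^y)$ controls how often two nearby trajectories can separate, again feeding into Chacon--Ornstein. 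There is no Choquet--Deny step and no conjugacy to translations.

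Two smaller points. In your argument for (2), what you need is that $\supp\nu'$ is non-discrete, not that $M'$ is; this follows once you assume $\supp\nu$ contains no discrete $\Gamma$-invariant subset, since $\supp\nu'$ is such a subset. In (3), you cannot appeal to part (1) when the minimal set $M$ is discrete; that case needs a separate (easy) argument via Chacon--Ornstein applied at a common atom.
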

	
The proof of Theorem \ref{thm:main2} will be given in Section \ref{sec: Minimalty}. In Section \ref{sec: uniqueness} we will show that Theorem \ref{thm:main} is a consequence of this result together with the contraction hypothesis and the ergodic decomposition of invariant measures.  We would like to point out that the results of these two theorems are quite optimal and that conditions (\Rec),  (\Cont) and (\UB) are  all needed to ensure uniqueness. In Section \ref{sect: exp} we shall  provide a number of examples and discus our hypothesis.

In this paper we would also like to show how the general theorem -- Theorem \ref{thm:main}, can be applied to several specific but interesting situations. For instance we will prove  that an immediate consequence is the uniqueness of an invariant measure for recurrent affine recursions:
\begin{cor}\label{cor: aff}
Let $\mu$ be a discrete measure on $\Gamma\subset \Hom^+(\mathbb R)$. Assume that  every $g\in\Gamma$ is of the form $g(x)=A(g)x+B(g)$ for $x\in\mathbb R$. Moreover, assume that there exists $g_0\in \Gamma$ such that $A(g_0)<1$. Then, if conditions  (\Rec) and (\UB) hold, the corresponding Stochastic Dynamical System admits a unique invariant measure $\nu$.
\end{cor}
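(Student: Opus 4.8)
The plan is to deduce Corollary~\ref{cor: aff} from Theorem~\ref{thm:main}, so the only work is to verify that the contraction hypothesis (\Cont) holds automatically for affine maps once (\Rec), (\UB) and the existence of a contracting element are assumed; conditions (\Rec) and (\UB) are given. First I would record that every $g\in\Gamma$ has the form $g(x)=A(g)x+B(g)$ with $A(g)>0$ (orientation preserving), and that for a composition $\ell=g_{i_k}\circ\cdots\circ g_{i_1}$ one has $A(\ell)=\prod_j A(g_{i_j})$ and $\ell$ contracts Euclidean distances by exactly the factor $A(\ell)$. Hence to shrink a given compact set $K$ into the fixed interval $\I$ from (\Rec) it suffices to (a) produce semigroup elements with arbitrarily small multiplicative part $A$, and (b) use recurrence to move the (now tiny) image back near $\I$.

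For step (a), the hypothesis gives $g_0\in\Gamma$ with $A(g_0)<1$; then $A(g_0^{\,n})=A(g_0)^n\to0$, so the semigroup generated by $\Gamma$ contains elements whose linear part is as small as we wish, and consequently $g_0^{\,n}(K)$ has length $\le A(g_0)^n\,\length(K)\to 0$. The remaining issue is that $g_0^{\,n}(K)$, though short, may drift off to $\pm\infty$ (its fixed point need not lie near $\I$). To correct this I would invoke recurrence: pick any point $x_0\in\I$; by (\Rec) applied along the deterministic trajectories available in $\Gamma$ (more precisely, using that (\Rec) forces, for the point $y_n:=g_0^{\,n}(x_0)$, the existence with positive probability — hence the existence in the semigroup — of some $h_n\in\langle\Gamma\rangle$ with $h_n(y_n)\in\I$), the map $h_n\circ g_0^{\,n}$ sends $x_0$ into $\I$ while still contracting lengths by $A(h_n)A(g_0)^n$. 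Choosing $n$ large enough that this contraction factor is smaller than $|\I|/(\length(K)+\dist(x_0,K))$ — note $A(h_n)$ is bounded along a suitable subsequence because $h_n(y_n)$ and $y_n+$(bounded set) both stay in a fixed compact region once we also track an endpoint — we get $h_n\circ g_0^{\,n}(K)\subset\I$, which is exactly (\Cont).

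The main obstacle is precisely controlling $A(h_n)$: recurrence as stated is a statement about the random trajectory, and a priori the correcting elements $h_n$ supplied by it could have huge linear parts that swamp the smallness of $A(g_0)^n$. The clean way around this is to apply the recurrence hypothesis not to a single point but to the compact interval $J_n:=g_0^{\,n}(\I)\supseteq$ a neighbourhood of $y_n$: since $J_n$ is contained in a fixed compact set $\I'$ independent of $n$ (as $A(g_0)<1$ keeps $g_0^{\,n}(\I)$ bounded, its closure lying in some fixed $\I'$), recurrence gives a single $h\in\langle\Gamma\rangle$, independent of $n$, with $h(\I')\subset\I$; then $h\circ g_0^{\,n}$ contracts by $A(h)A(g_0)^n\to0$ and maps $\I\supseteq$ (eventually) $g_0^{-n}(K)$... more carefully, one fixes $h$ with $h(\I')\subset\I$, then chooses $n$ so large that $g_0^{\,n}(K)\subset \I'$ (possible because $g_0^{\,n}(K)$ shrinks toward the bounded fixed point of $g_0$, which lies in $\I'$ after enlarging $\I'$ once), and concludes $h\circ g_0^{\,n}(K)\subset h(\I')\subset\I$. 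With (\Cont) established, Theorem~\ref{thm:main} applies verbatim and yields the unique invariant Radon measure $\nu$, completing the proof.
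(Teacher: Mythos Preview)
Your argument contains the right kernel but is built around an unnecessary detour that also introduces an unjustified step. The source of the detour is a misreading of condition (\Cont): the interval appearing in (\Cont) is merely \emph{some} compact interval, not necessarily the recurrence interval from (\Rec). Once you drop that identification, your own final observation already finishes the proof. Since $A(g_0)<1$, the affine map $g_0$ has a unique fixed point $p_0=B(g_0)/(1-A(g_0))$, and for any compact $K$ one has $g_0^{\,n}(K)\to\{p_0\}$ in the Hausdorff metric; hence $g_0^{\,n}(K)\subset\I$ for all large $n$, where $\I$ is any fixed compact interval containing $p_0$ in its interior. This is precisely the paper's route (see Lemma~\ref{lem:condAL}(1), which does exactly this computation in the slightly more general asymptotically linear setting): iterate the single contracting map, and take $\I$ to be a neighbourhood of its fixed point. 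No correcting element $h$ and no appeal to (\Rec) are needed to verify (\Cont).

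The step you added, ``recurrence gives a single $h\in\Gamma^*$ with $h(\I')\subset\I$'', is not justified by (\Rec) as stated. Recurrence guarantees that for each \emph{point} $x$ the trajectory visits $\I$ infinitely often, hence that some $h\in\Gamma^*$ satisfies $h(x)\in\I$; it says nothing about mapping an entire interval $\I'$ into $\I$. For affine $h$ this would require simultaneous control of $A(h)$ and of the image of one endpoint, which recurrence alone does not provide. Fortunately, as noted above, this step is superfluous: once you observe that $g_0^{\,n}(K)$ is eventually trapped in a fixed compact $\I'$, you have already verified (\Cont) with the interval $\I'$, and Theorem~\ref{thm:main} applies directly.
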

This result is well known but we give here a new proof of it. In particular, it is not based on the Lipschitz property of affine transformations. The proof is valid both in the contractive case (when there exists a stationary probability \cite{Furstenberg}) and in the centred case (when the invariant measure has infinite mass \cite{Babillot:Bougerol:Elie}).

Recurrence (\Rec) and contraction condition (\Cont)  can be easily verified when homeomorphisms  are repulsive at $\pm \infty$. In Lemma \ref{lem:condAL} we will present some general criteria for systems that are asymptotically linear, such as affine recursions.  As a consequence, using  a conjugation, one can obtain the following results for $C^2$-diffeomorphism of the interval.
\begin{cor}\label{cor:C2diff}
	Let  $\mu$ be a finitely  supported measure on the group of increasing diffeomorphism in $C^2([0,1])$. Assume that
	\begin{itemize}
		\item[(\Rec')] $\displaystyle
		\sum_{h\in\supp \mu}\mu(h)\ln h'(0)\geq 0\quad\text{and}\quad \sum_{h\in\supp \mu}\mu(h)\ln h'(1)\geq 0$;
		\item[(\Cont')] there exists $h\in\supp \mu$ such that  $h'(0)>1$ and $h'(1)>1$;
		\item[(\UB')]  for every $x\in (0, 1)$ there exist $h_1,h_2\in\supp \mu$   such that $h_1(x)<x<h_2(x)$.
	\end{itemize}
	Then there exists a unique invariant Radon measure on $(0,1)$.
\end{cor}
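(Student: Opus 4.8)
The plan is to conjugate everything to $\R$ and invoke Theorem \ref{thm:main}. Fix the increasing diffeomorphism $\varphi\colon(0,1)\to\R$, $\varphi(x)=\ln\frac{x}{1-x}$, which behaves like $\ln x$ near $0$ and like $-\ln(1-x)$ near $1$, and for $h\in\supp\mu$ set $g_h:=\varphi\circ h\circ\varphi^{-1}$. Since each $h$ is an increasing homeomorphism of $[0,1]$ fixing the endpoints, $g_h$ is an orientation-preserving homeomorphism of $\R$, and the image $\mu^{\varphi}$ of $\mu$ under $h\mapsto g_h$ is a finitely supported probability measure on $\Homr$. Because $\varphi$ is a homeomorphism it carries compact subsets of $(0,1)$ onto compact subsets of $\R$ and conjugates the two Markov kernels, so $\nu\mapsto(\varphi^{-1})_{*}\nu$ is a bijection between the invariant Radon measures of the $\R$-system and those of the original system on $(0,1)$. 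It therefore suffices to check that the conjugated system satisfies (\Rec), (\Cont) and (\UB).

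I would first record the asymptotics of $g_h$ at $\pm\infty$. Writing $x=\varphi^{-1}(t)$ one has $x=e^{t}(1+O(e^{t}))$ as $t\to-\infty$ and $1-x=e^{-t}(1+O(e^{-t}))$ as $t\to+\infty$. Since $h\in C^{2}([0,1])$ is increasing with $h(0)=0$, $h(1)=1$ (so $h'(0),h'(1)>0$), Taylor's formula gives $h(x)=h'(0)x+O(x^{2})$ near $0$ and $1-h(x)=h'(1)(1-x)+O((1-x)^{2})$ near $1$; substituting into $g_h(t)=\ln\frac{h(x)}{1-h(x)}$ yields
\begin{equation*}
g_h(t)=t+\ln h'(0)+O(e^{t})\quad(t\to-\infty),\qquad g_h(t)=t-\ln h'(1)+O(e^{-t})\quad(t\to+\infty).
\end{equation*}
Thus, near each end of $\R$, the conjugated system is a random walk by translations up to exponentially small (in particular summable) errors, with translation exponents $\ln h'(0)$ at $-\infty$ and $-\ln h'(1)$ at $+\infty$; this places it in the ``asymptotically linear'' framework of Lemma \ref{lem:condAL}, and is the only point where $C^{2}$ is used quantitatively.

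Next I would read off the three conditions. Condition (\UB) is immediate: for $t=\varphi(x)$ the monotonicity of $\varphi$ turns $h_1(x)<x<h_2(x)$ into $g_{h_1}(t)<t<g_{h_2}(t)$, and every $t\in\R$ is of this form, so (\UB') gives (\UB). For (\Cont), take $h_0$ as in (\Cont'): then $\ln h_0'(0)>0$ and $-\ln h_0'(1)<0$, so by the expansion above there is $M>0$ with $g_{h_0}(t)>t$ for $t\le-M$ and $g_{h_0}(t)<t$ for $t\ge M$; hence $\mathrm{Fix}(g_{h_0})$ is nonempty and contained in $[-M,M]$, and for all $a<-M$, $b>M$ the nested intervals $g_{h_0}^{n}([a,b])$ shrink to $[\min\mathrm{Fix}(g_{h_0}),\max\mathrm{Fix}(g_{h_0})]\subseteq[-M,M]$. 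With $\I:=[-M-1,M+1]$ this gives $g_{h_0}^{n}(K)\subseteq\I$ for every compact $K\subset\R$ once $n$ is large, and $g_{h_0}^{n}$ lies in the semigroup generated by the $g_h$'s, which is (\Cont). Finally (\Rec') reads $\sum_h\mu(h)\ln h'(0)\ge0$ and $\sum_h\mu(h)\ln h'(1)\ge0$, i.e.\ the mean translation drift of the conjugated walk points toward the interior at each end (possibly vanishing there); together with the summable error bound this is the recurrence criterion of Lemma \ref{lem:condAL} and yields (\Rec). Theorem \ref{thm:main} then gives a unique invariant Radon measure for $\mu^{\varphi}$, and its $(\varphi^{-1})$-pushforward is the required unique invariant Radon measure on $(0,1)$.

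The expansion and the transfer of conditions are routine; the substance lies in Lemma \ref{lem:condAL}, namely that a walk on $\R$ which is a translation walk up to exponentially small corrections, with inward-pointing drift at each end, is recurrent. The non-degenerate case is a straightforward Foster--Lyapunov argument, but when a drift vanishes one must compare with the associated mean-zero, finite-variance random walk (recurrent by Chung--Fuchs) and verify, via a slowly growing Lyapunov function such as $\ln(1+|t|)$, that the corrections accumulated along the trajectory are negligible against the diffusive fluctuations. This borderline case is the step I expect to be the main obstacle.
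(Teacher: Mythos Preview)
Your strategy---conjugate to $\R$, verify (\Rec), (\Cont), (\UB), and invoke Theorem \ref{thm:main}---is exactly the paper's, and your verifications of (\UB) and (\Cont) are correct. The substantive difference is the choice of conjugation, and it matters.

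The paper conjugates by $r(u)=-\tfrac1u+\tfrac1{1-u}$ (Lemma \ref{lem:C2vsAL}), under which the transferred maps satisfy the linear bound \eqref{eq: lin bound} with
\[
\Aminus(h_r)=\frac{1}{h'(0)},\qquad \Aplus(h_r)=\frac{1}{h'(1)}.
\]
Conditions (\Cont') and (\Rec') then become precisely the hypotheses of Lemma \ref{lem:condAL}(1) and (2)(b): some $\ps{\pm}A<1$ and $\int\log\ps{\pm}A\,d\mu\le 0$. The proof of (2)(b) compares with genuine affine recursions $Y_n=A_nY_{n-1}+B_n$, whose recurrence in the contractive and critical cases is supplied by \cite{Babillot:Bougerol:Elie}; crucially, (\Cont') guarantees that $A$ is not a.s.\ equal to~$1$, so one is never in the degenerate random-walk case.

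Your logarithmic conjugation $\varphi(x)=\ln\frac{x}{1-x}$ instead produces asymptotic \emph{translations}: $\Aplus(g_h)=\Aminus(g_h)=1$ for every $h$. The linear bound \eqref{eq: lin bound} then holds trivially, and so does the hypothesis $\int\log\ps{\pm}A\,d\mu=0\le 0$ of Lemma \ref{lem:condAL}(2)(b)---but the lemma's proof breaks down, because the comparison process $Y_n=Y_{n-1}+B_n$ is now a random walk whose recurrence would require the mean of the chosen bound constants $B(g)$ to vanish, and nothing in the construction forces this. In particular your sentence ``this is the recurrence criterion of Lemma \ref{lem:condAL}'' is not justified: the conditions (\Rec') play no role in the hypotheses of (2)(b) once all slopes equal~$1$, so the lemma as written cannot be what encodes them. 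What you really need is a separate recurrence statement for perturbed translation walks with nonpositive drift at each end (and the zero-drift boundary case handled via Chung--Fuchs plus control of the exponentially small errors), which you correctly flag as the main obstacle but do not supply. The paper's choice of $r$ is designed precisely to avoid this detour: it turns the boundary derivatives $h'(0),h'(1)$ into multiplicative coefficients, so that the well-developed theory of affine recursions can be quoted directly.
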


In Section \ref{sec:ergodic}, as an appendix, we shall discuss some seminal results on ergodic invariant measures  for Markov--Feller processes on locally compact metric spaces.
In particular, we will give an explicit proof of the ergodic decomposition of a general invariant Radon measure as an integral over all ergodic Radon measures.

\section{Basic notions and preliminary results}\label{sec:basic}

In this section we give the fundamental notions and basic facts about invariant Radon measures that will play an important role in the sequel.

\subsection{Random walks on $\Homr$ and associated dynamical systems}

We denote by $\Homr$ the set of orientation preserving homeomorphisms  of the real line.
We consider the left random walk on $\Hom^+(\mathbb R)$, i.e. the Markov Chain
$$
\ell_n:=\g_n\cdots \g_1
$$
obtained by composition product of a sequence $(\g_n)_{n\in\N}$, which is a sequence of i.i.d. $\Hom^+(\mathbb R)$--valued random variables whose distribution is a discrete measure $\mu$.
Let $$\Gamma:=\{g\in\Homr\,\,|\,\, \mu(g)>0 \}\subset \Hom^+(\mathbb R)$$ be the discrete support of $\mu$.
The space of trajectories of the random walk is then the infinite product space $\Gamma^{\mathbb N}$. This space will be equiped with the product measure: $(\Gamma^{\mathbb N}, \mu^{\otimes \mathbb N})$. The associated probability law will be denoted by~$\P$. We denote by $$\Gamma^*:=\{g=g_1\cdots g_n\in \Homr\,\,|\,\,\mbox{ for some } g_i\in \Gamma \}$$ the semigroup generated by $\Gamma$. Observe that $\Gamma^*$ is countable and may be equipped with the discrete topology.

We denote by $\mathcal B(\R)$ the collection of all Borel subsets of $\R$, by $B(\R)$
the family of all Borel--measurable bounded (real valued) functions with the supremum norm $\|\cdot\|_{\infty}$ and by $C(\R)$ the subspace of $B(\R)$ consisting of all continuous functions. The subfamily of $C(\R)$ consisting of all continuous functions with compact support is denoted by $C_C(\R)$.

Since the semigroup $\Gamma^*$ acts on $\R$, we can  introduce the Stochastic Dynamical System on the real line $(X_n^x)_{n\in\N}$  corresponding to the left random walk on $\Hom^+(\mathbb R)$, that is, for any $x\in\mathbb R$ we define the Markov chain
\begin{equation*}
X_n^x:= \g_n(X_{n-1}^x)=\ell_n(x)\quad \mbox{for $n\ge 1$}
\end{equation*}
and $X_0^x=x$.
The transition probability for this Markov chain is given by the formula:
$$
P(x, A)=\sum_{g\in \Gamma} {\bf 1}_A(g(x))\mu(g)\qquad \mbox{ for $x\in\R$ and $A\in {\mathcal B}(\R)$.}
$$
It induces a positive contraction $P$ on $B(\R)$ defined by
\begin{equation}\label{e2_20.02.13}
Ph(x):=\sum_{g\in\Gamma}h(g (x))\mu(g)\qquad \mbox{for $h\in B(\R)$}.
\end{equation}

For any Radon measure $\nu$ on $\R$, let $ \P_\nu$ be the measure defined on the trajectories of  the Markov chain $(X_n)_{n\in\mathbb N}$  where $X_0$ is distributed according to  $\nu$.
More precisely, $ \P_\nu$ is a measure on the space  $ \R^\N$  (endowed with the product $\sigma$-algebra) such that for any finite collection of compact intervals $I_i$, $i=0,\ldots, n$, the measure of the cylinder $[I]=I_0\times\cdots \times I_n\times \R\times \cdots$ is defined by:
\begin{align*}
\P_\nu([I])&= \P_\nu(X_0\in I_0,\ldots, X_n\in I_n )\\&:=\sum_{g_1,\ldots, g_n\in\Gamma} \mu(g_1)\cdots \mu(g_n)\int_\R {\bf 1}_{I_0}(x) {\bf 1}_{I_1}(g_1(x))\cdots {\bf 1}_{I_n}(g_n\cdots g_1(x))\d\nu(x).
\end{align*}
Observe that if $\nu$ is a Radon measure of infinite mass then $ \P_\nu$ is  not a probability measure but it is finite on the cylinders whose bases $I_i$ are compact intervals.

\subsection{Invariant measure and recurrence }
An {\it invariant Radon measure} for the system induced by a measure $\mu$ is a  Borel measure $\nu$ that is finite on compact sets and satisfies
$$
\nu(f) = \int_{\R}\E f(X_1^x)\nu(\d x) = \sum_{g\in \Gamma} \int_{\R} f(g(x))\nu(\d x)\mu(g)=\nu(Pf)
$$ for any $f\in C_C(\R)$. In short, we shall say the $\nu$ is {\it invariant} for $\mu$, or that $\nu$ is a $\mu$--invariant measure.
It is easy to check that if $\nu$ is invariant for $\mu$, then the measure $\P_\nu$ is invariant for the shift $\tau$ of $\R^\N$.

It is well known that recurrence hypothesis  (\Rec) entails immediately the existence of a $\mu$-invariant Radon measure.
 Indeed, it is easy to see that the operator $P$ is {\it topologically conservative}, that is, there exists a bounded set $K\subset\R$ such that
$$
\sum_{k=0}^{\infty} P^k{\bf 1}_K(x)=\infty \quad  \mbox{for every $x\in\R$}.
$$
Actually,  ($\mathfrak{R}$) implies that this condition holds with
$K=\I$. Then Lin's result \cite[Theorem 5.1]{Lin70} ensures the existence of a $\mu$--invariant Radon measure $\nu$.
\medskip

\subsection{Support of an invariant measure and closed $\Gamma$-invariant sets}\label{sec:Gamma-invariant-set}
The analysis of $\mu$-invariant measures is strictly related to the study of \textit{closed $\Gamma$- invariant sets} of $\R$, that is, closed sets $M\subseteq\R$ such that $gM\subseteq M$ for all $g\in \Gamma$.
In fact, for any $\mu$-invariant measure $\nu$  its support
$$
\supp\nu:=\{x\in\R: \nu(V_x)>0\,\,\mbox{ for every open neighbourhood $V_x$ of } x\}
$$
is a  closed   $\Gamma$-invariant set of $\R$. To check $\Gamma$-invariance, take $x\in \supp\nu$, $g_0\in \Gamma$ and $V$ an open neighbourhood of $g_0(x)$. Then
$$\nu(V)=\sum_{g\in\Gamma}\mu(g)\nu(g^{-1}V)\geq \mu(g_0)\nu(g_0^{-1}V)>0,$$
since $g_0^{-1}V$ is an open-neighbourhood of $x$.

If (\Rec) holds then, thanks to Lin's Theorem, any closed $\Gamma$-invariant set contains the support of at least one $\mu$-invariant Radon measure $\nu$. In particular, if there exist two disjoint closed $\Gamma$-invariant sets, there are at least two different invariant measures.

To decide whether a $\Gamma$-invariant set can be (or contains) the support of  different invariant measures it is indispensable  to characterise \textit{minimal closed $\Gamma$-invariant} sets, that is, closed $\Gamma$-invariant sets  not containing other closed $\Gamma$-invariant sets except the void set and itself.

\subsection{Unboundedness hypothesis}
The last of the fundamental hypotheses of our paper:
\begin{itemize}
	\item[($\mathfrak{U}$)] \textbf{Unboundedness of the action.}  For every $x\in\mathbb R$ we have $g_1(x)<x< g_2(x)$ for some $g_1, g_2\in \Gamma$
\end{itemize}	
guarantees that any closed $\Gamma$-invariant set  is unbounded. In fact, we have the following easy lemma:

\begin{lem}\label{lem:unbound}
Hypothesis ($\mathfrak{U}$) is satisfied if and only if for any $x\in \R$,
$$\sup_{g\in\Gamma^*} g(x)=+\infty\quad \mbox{ and }\quad \inf_{g\in\Gamma^*} g(x) =-\infty. $$
In particular, if condition ($\mathfrak{U}$) is satisfied then any nonempty $\Gamma$-invariant set is unbounded on both sides.

\end{lem}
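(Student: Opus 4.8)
The plan is to prove the equivalence by unwinding the definition of the semigroup $\Gamma^*$ and iterating the one-step inequalities in hypothesis ($\mathfrak{U}$).

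First, for the implication ($\mathfrak{U}$)$\Rightarrow$ unboundedness of $\sup_{g\in\Gamma^*}g(x)$, fix $x\in\R$ and suppose toward a contradiction that $s:=\sup_{g\in\Gamma^*}g(x)<+\infty$. Since each $g\in\Gamma^*$ is orientation preserving, the set $O^+(x):=\{g(x):g\in\Gamma^*\}\cup\{x\}$ is a bounded subset of $\R$, hence $s=\sup O^+(x)$ is finite. Applying ($\mathfrak{U}$) to the point $s$ gives some $g_2\in\Gamma\subset\Gamma^*$ with $g_2(s)>s$. Now I would approximate: pick $g\in\Gamma^*$ with $g(x)$ close enough to $s$ that $g_2(g(x))>s$, which is possible because $g_2$ is continuous and increasing and $g_2(s)>s$; more concretely, by continuity of $g_2$ there is $\delta>0$ with $g_2(y)>s$ whenever $y>s-\delta$, and by definition of the supremum there is $g\in\Gamma^*$ with $g(x)>s-\delta$. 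But then $g_2 g\in\Gamma^*$ and $g_2g(x)>s$, contradicting the definition of $s$. The argument for $\inf_{g\in\Gamma^*}g(x)=-\infty$ is symmetric, using $g_1$ instead of $g_2$.

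Second, the converse implication is immediate: if $\sup_{g\in\Gamma^*}g(x)=+\infty$ then in particular there is $g\in\Gamma^*$ with $g(x)>x$; writing $g=h_n\cdots h_1$ with $h_i\in\Gamma$ and letting $x_k=h_k\cdots h_1(x)$ (so $x_0=x$, $x_n=g(x)>x$), at least one index $k$ must satisfy $x_k>x_{k-1}$, i.e.\ $h_k(x_{k-1})>x_{k-1}$ — but this gives unboundedness above only at the point $x_{k-1}$, not at $x$ itself. So instead I would argue directly at the level of the point $x$: since $\Gamma\subset\Gamma^*$, it suffices to observe that the hypothesis $\sup_{g\in\Gamma^*}g(x)=+\infty$ cannot hold if $g(x)\le x$ for all $g\in\Gamma$, because then $g(x)\le x$ would propagate to all of $\Gamma^*$ by induction on word length (using that each generator is increasing and maps $(-\infty,x]$ into itself once it fixes the relevant comparison — more carefully, if every generator satisfies $h(x)\le x$ it does not immediately follow that $hh'(x)\le x$, so this direction genuinely needs the contrapositive phrased pointwise). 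The clean way: ($\mathfrak{U}$) is literally the $n=1$ case of the stated condition restricted to $\Gamma$, and the displayed two-sided condition for all $x$ trivially implies ($\mathfrak{U}$) for all $x$ since $\sup=+\infty$ forces the existence of some $g_2\in\Gamma^*$ with $g_2(x)>x$, and then the word-length induction above locates a generator witnessing ($\mathfrak{U}$) at $x$ after all — because one may take $g_2$ of minimal length, so that $x_{n-1}\le x$ and $x_n=h_n(x_{n-1})>x\ge x_{n-1}$ forces $h_n(x)>x$ by monotonicity of $h_n$.

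Finally, for the last sentence: let $M$ be a nonempty $\Gamma$-invariant set and $x\in M$. By $\Gamma$-invariance and induction on word length, $g(x)\in M$ for every $g\in\Gamma^*$. Hence $M\supseteq\{g(x):g\in\Gamma^*\}$, and by the first part this set has supremum $+\infty$ and infimum $-\infty$, so $M$ is unbounded above and below. The main obstacle, as indicated above, is the bookkeeping in the converse direction: one must be careful that the one-step inequality in ($\mathfrak{U}$) is recovered at the \emph{same} point $x$, which is handled by passing to a word of minimal length realizing $g(x)>x$ and invoking monotonicity of the last generator. Everything else is a routine supremum/continuity argument.
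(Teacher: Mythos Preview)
Your proof is correct. The forward direction matches the paper's approach: both apply ($\mathfrak{U}$) at the finite supremum $s$ to reach a contradiction. The paper streamlines your $\varepsilon$--$\delta$ continuity step by observing directly that, since $g_0$ is an increasing homeomorphism, $g_0(s)=\sup_{g\in\Gamma^*}(g_0 g)(x)\le s$, which already contradicts ($\mathfrak{U}$) at the point $s$.

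For the converse, your minimal-word-length extraction is correct but more elaborate than needed. Your expressed doubt --- that ``if every generator satisfies $h(x)\le x$ it does not immediately follow that $hh'(x)\le x$'' --- is misplaced: it \emph{does} follow, since $h'(x)\le x$ together with $h$ increasing gives $h(h'(x))\le h(x)\le x$. This is precisely the paper's argument (contrapositive plus induction on word length, using monotonicity of each generator), and it is shorter than locating a generator via a shortest word realising $g(x)>x$. Both routes are valid; the paper's contrapositive is the cleaner one, and you nearly had it before talking yourself out of it.
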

\begin{proof}
Suppose first that ($\mathfrak{U}$) holds and $x_0=\sup_{g\in\Gamma^*} g(x) <\infty$. Then for all $g_0\in \Gamma$
$$
g_0(x_0)= \sup_{g\in\Gamma^*} (g_0 g)(x) \leq \sup_{g\in\Gamma^*} g(x)= x_0,
$$
which  contradicts to ($\mathfrak{U}$).

Conversely, assume that there is an $x\in \R$ such that  $g_1(x)\leq x$ for all $g_1\in\Gamma$. Since all the homeomorphisms preserve the order,  $g_2 (g_1(x)) \le g_2(x) \le x$ for all $g_1,g_2\in \Gamma$. Thus the induction argument yields $g(x)\le x$ for all $g\in\Gamma^n$ and $n\in\mathbb N$. This finally implies that
 $\sup_{g\in\Gamma^*}g(x)\leq x$.

\end{proof}
In particular, under condition  ($\mathfrak{U}$)  the support of any invariant measure is unbounded in both direction. Note  also that if ($\mathfrak{U}$) holds for $\Gamma$ it also holds for $\Gamma^{-1}$.

\subsection{Ergodic measures and Ratio Ergodic Theorem}\label{sec: Ratio erg}

Among $\mu$-invariant measures, ergodic measures play a special role. We present here the main facts and we refer to Section \ref{sec:ergodic} for a more detailed discussion.

For any measurable $A\subseteq \R$ denote by  $\nu_A$ the restriction of $\nu$ to $A$. The restriction is called
trivial if  either $\nu(A)=0$ or $\nu(\R\setminus A)=0$.
We say that a  measure $\nu$ is {\it ergodic} if for  any $A\in {\mathcal B}(\R)$ such that the restriction $\nu_A$ is invariant, it must be also trivial.
 In our setting we can say that if an invariant measure is ergodic, then  any closed $\Gamma$-invariant set $M$ is either null or has full measure: $\nu(M)=0$ or $\nu(\R\setminus M)=0.$
In  Section \ref{sec: appendix erg mes} we give a~more detailed discussion of other equivalent characterisations of the ergodic measures.

Ergodic measures can be seen as atomic bricks that are used to construct any invariant measure. In fact, any invariant measure $\nu$ can be decomposed into ergodic components, in the sense that there exists a measurable set $\EE_\nu$ of  ergodic measures and  a finite measure $\eta_\nu$ on $\EE_\nu$ such that
\begin{equation}\label{e1_20.02.13}
\nu(f)=\int_{\EE_\nu}\nu_e(f) d\eta_\nu(e) \quad\mbox{for all $f\in C_C(\R)$}.
\end{equation}
In  Theorem \ref{Thm: erg dec} we provide a proof of this decomposition for conservative  Markov--Feller processes. Note that the above decomposition entails  that if there are two different invariant measures, there must exist at least two different ergodic measures.
Another consequence is that if $\nu$ is invariant, there exists an ergodic measure  $\nu_e$ such that $\supp\nu_e\subseteq \supp\nu$. In fact, for $\eta_\nu$-almost all $e\in \EE_\nu$ we have $\nu_e(\R\setminus\supp\nu)=0$. Hence we have $\supp\nu_e\subseteq \supp\nu$.

A fundamental property of ergodic $\mu$-invariant Radon measures, that we will often use in the sequel, is  the Ratio Ergodic Theorem (or the Chacon-Ornstein Theorem), that gives the asymptotic behaviour of the partial sum defined by:
 \begin{equation}\label{eq:sn}
S_n\phi(x):=\phi (\g_{ n}\cdots \g_1(x))+\ldots+  \phi(\g_{1}(x))+\phi(x)=\sum_{k=0}^n\phi(X_n^x)
\end{equation}
for any measurable function $\phi\in L^1(\R,\nu)$ and $x\in \R$.
Observe that if $\phi$ is the indicator function of some set $A$, then $S_n\phi(x)=S_n{\bf 1}_A(x)$ is the number of visits in $A$ up to time $n$ for the Markov chain $(X_n^x)_{n\in\mathbb N}$ starting at $x$.

Whenever recurrence condition ($\mathfrak{R}$) is satisfied, it follows that  for any arbitrary function $\Phi$ whose support contains a recurrent interval $\I$ we have  $S_n\Phi(x)\to +\infty$ for any $x\in\R$, as $n\to\infty$.

If $\nu$ is ergodic for any nonnegative function $\Phi\in L^1(\R,\nu)$  we have $\nu(\Phi)>0$ if and only if  $S_n\Phi(x)\to +\infty$ for $\nu$-almost all $x$
 and in this case the Chacon-Ornstein Theorem \cite{Chacon:Ornstein}  guarantees that for  any $\phi\in L^1(\R, \nu)$  the following limit exists
\begin{equation}\label{eq:chacon}
\lim_{n\to\infty}\frac{S_n\phi(x)}{S_n \Phi(x)} =\frac{\nu(\phi)}{\nu(\Phi)}
\end{equation}
 for $\mu^\N$-almost all sequences $(g_1, g_2,\ldots)\in\Gamma^{\mathbb N}$ and $\nu$-almost all $x\in\R$. This is a consequence of the fact that the shift $\tau$ is a contraction on the space $L^1(\R^\N,\P_\nu)$ and that $\P_\nu$ is ergodic, if $\nu$ is ergodic  (see Section \ref{sec:ergodic} and in  particular Corollary \ref{cor: ergodic shift} for a more complete discussion of these results).

\subsection{Measures with atoms}
The following lemma is useful when we have to deal with some invariant measures $\nu$ that have \textit{atoms}, that is, for which there exists $x\in\R$ such that $\nu(\{x\})>0$. It essentially says that one can have invariant measures with atoms only if  the orbits of action of $\Gamma^{-1}$ are somehow discrete.
\begin{lem}\label{lem-nu-atom}
 Assume that condition (\Rec) is satisfied.
	Let $\nu$ be a $\mu$-invariant Radon measure with atoms and let $K$ be a compact interval that contains the recurrence interval $\I$ and some atoms. Then there exists   $x_0\in K$ such that the orbit $(\Gamma^{-1})^*x_0\cap K$ is finite.
\end{lem}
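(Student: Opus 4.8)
The plan is to use the Ratio Ergodic Theorem to extract quantitative information from the existence of an atom. First I would reduce to the ergodic case: by the ergodic decomposition \eqref{e1_20.02.13}, if $\nu$ has an atom then some ergodic component $\nu_e$ has an atom, and since the recurrence interval $\I$ must meet $\supp\nu_e$ (any closed $\Gamma$-invariant set contains $\I$ up to the recurrence dynamics, and in any case $\nu_e$ being recurrent forces mass on $\I$), we may enlarge $K$ if necessary and assume $\nu$ itself is ergodic, with $\nu(\I)>0$ and at least one atom $a\in K$. Let $p:=\nu(\{a\})>0$ and note that $\nu(K)<\infty$ since $\nu$ is Radon and $K$ is compact.

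Next I would count visits to atoms. Apply the Chacon--Ornstein Theorem \eqref{eq:chacon} with $\Phi=\mathbf 1_{\I}$ (or $\mathbf 1_K$) and $\phi=\mathbf 1_{K}$: for $\nu$-a.e.\ $x$ and $\P$-a.e.\ trajectory,
\begin{equation*}
\frac{S_n\mathbf 1_{K}(x)}{S_n\mathbf 1_{\I}(x)}\longrightarrow \frac{\nu(K)}{\nu(\I)}=:R<\infty .
\end{equation*}
On the other hand, every visit of the Markov chain to an atom $y$ of $\nu$ lying in $K$ contributes to $S_n\mathbf 1_{K}(x)$. The key point is a pigeonhole/second-moment bound: because the chain is recurrent in $\I$ and $\nu$ is finite on $K$, the expected number of visits up to time $n$ to the \emph{set of atoms in $K$} cannot grow faster than $S_n\mathbf 1_{\I}(x)$ does, and each individual atom $y$ with $\nu(\{y\})$ bounded below by a fixed $\varepsilon$ is visited comparably often. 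Concretely, since $\sum_{y\in K}\nu(\{y\})\le\nu(K)<\infty$, for every $\varepsilon>0$ there are only finitely many atoms in $K$ of mass $\ge\varepsilon$; I would show that one of these "heavy" atoms, call it $x_0$, is visited infinitely often from $\nu$-a.e.\ starting point along a positive-probability set of trajectories.

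The final step is to translate "$x_0$ is visited infinitely often" into finiteness of the backward orbit $(\Gamma^{-1})^*x_0\cap K$. Suppose for contradiction this backward orbit were infinite inside $K$; pick $N$ distinct points $y_1,\dots,y_N\in K$ with $\ell^{(i)}(y_i)=x_0$ for suitable $\ell^{(i)}\in\Gamma^*$. Each $y_i$ is then an atom of $\nu$ of mass $\ge p\cdot\mu(\text{the word realizing }\ell^{(i)})>0$; more usefully, since $\nu$ is invariant and $x_0$ has mass $p$, pulling back gives $\sum_{g\in\Gamma}\mu(g)\,\nu(g^{-1}\{x_0\})=\nu(\{x_0\})=p$, and iterating this identity along the tree of preimages shows that the total $\nu$-mass carried by the $n$-step backward orbit of $x_0$ equals $p$ for every $n$. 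If the backward orbit in $K$ were infinite, combined with the recurrence of these preimages to $K$ (each $y_i$, being itself reachable backward from $x_0$, has its own forward orbit returning to $\I\subset K$ infinitely often, hence carries definite mass that must be "seen" by the ratio limit $R$), one forces $\nu(K)$ to be infinite, contradicting that $\nu$ is Radon. Hence $(\Gamma^{-1})^*x_0\cap K$ is finite.

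The main obstacle I anticipate is making the third step rigorous: one must carefully organize the preimages of the atom $x_0$ into a tree, control how the invariant mass $p$ is distributed among them at each level, and use recurrence to guarantee that preimages lying in $K$ genuinely contribute a fixed positive amount to $\nu(K)$ rather than having their mass "escape" — it is here that hypothesis (\Rec) is essential, and getting the bookkeeping of masses versus the finite quantity $\nu(K)$ exactly right is the delicate point.
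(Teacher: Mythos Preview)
Your third step has a genuine gap, and the missing idea is precisely what the paper's proof supplies. The invariance identity $\sum_{g\in\Gamma}\mu(g)\,\nu(g^{-1}\{x_0\})=p$ only says that the \emph{weighted average} of the masses of one-step preimages equals $p$; iterating it does not give any uniform lower bound on the mass of an individual backward-orbit point. Nothing in your argument excludes the scenario where $(\Gamma^{-1})^*x_0\cap K$ is infinite but the masses $\nu(\{y_i\})$ decay geometrically and sum to something finite --- that is perfectly compatible with $\nu(K)<\infty$ and with all your ratio limits. Your appeal to recurrence ``forcing definite mass'' is not a proof: recurrence tells you trajectories return to $K$, not that each backward preimage of $x_0$ carries mass bounded below by a fixed constant.

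The paper's key idea, which you are missing, is to choose $x_0$ as an atom of \emph{maximal} mass in $K$, i.e.\ $\nu(\{x_0\})=\sup_{x\in K}\nu(\{x\})$. One then restricts to the countable orbit $O=(\Gamma\cup\Gamma^{-1})^*x_0$, forms the induced chain on $O\cap K$ (recurrence makes the induced kernel stochastic), and looks at the reversed kernel $\widehat p_K(x,y)=p_K(y,x)$. Writing $\nu(x_0)=\sum_{y\in O\cap K}\nu(y)\,\widehat p_K^{\,n}(x_0,y)$ and using that $\widehat p_K$ is sub-stochastic together with $\nu(y)\le\nu(x_0)$ forces $\nu(y)=\nu(x_0)$ for every $y\in(\Gamma^{-1})^*x_0\cap K$. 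Finiteness of $\nu(K)$ then immediately bounds the number of such $y$. No ergodicity, no Chacon--Ornstein, no ergodic decomposition is needed; in fact your reduction to the ergodic case is itself problematic, since the lemma fixes $K$ and an ergodic component of $\nu$ need not have its atoms inside that same $K$.
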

\begin{proof}
Let $\nu$ be a $\mu$-invariant Radon measure with atoms. We shall abbreviate $\nu (\{x\})$ to $\nu(x)$. Analogously, we shall also write ${\bf 1}_x$ for ${\bf 1}_{\{x\}}$. Note first that because $\nu$ is a Radon measure, there are at most countable many atoms and the mass of all atoms in $K$ is finite. Therefore $\sup_{x\in K}\nu(x)$ is finite and there is $x_K\in K$ such that $\nu(x_K) = \sup_{x\in K}\nu(x)$ (note however that it could be not uniquely determined).
 We will prove  that for any compact set $K$ containing  some atoms of $\nu$:
\begin{equation}\label{eq:ny9}
  \nu(y)=\nu(x_K) \qquad  \mbox{for any } y\in (\Gamma^{-1})^*x_K\cap K.
\end{equation}
Since  the total mass of $K$ is finite this will imply that $(\Gamma^{-1})^*x_K\cap K$ is finite.

Let $O = (\Gamma\cup \Gamma^{-1})^*x_K$ be the orbit of $x_K$ under the action of the group generated by $\Gamma$ endowed with the discrete topology. Note that $O$ is a $\Gamma$ and $\Gamma^{-1}$--invariant countable set.  We can define on $O$ a countable Markov chain $X_n$ (that is just the restriction to $O$ of the Markov chain defined on $\R$) with the transition  kernel
$$
p(x, y)=\P(\g_1(x)=y)=\sum_{g\in\Gamma}{\bf 1}_{y}(g(x))\mu(g) \qquad \forall x,y\in O.
$$
Let  $\overline\nu$ be the measure on $O$ defined by $\overline \nu(x):=\nu(x)$.
Observe that
$\overline \nu = \nu|_{O}$ and $\overline \nu$ remains $\mu$-invariant, i.e.
\begin{equation}\label{eq:ov nu}
  \overline \nu(x) = \sum_{y\in O} p(y, x)\overline \nu(y)
  = \sum_{g\in \Gamma}  \sum_{y\in O}  {\bf 1}_{x}(g(y))  \mu(g) \overline \nu(y)
  = \sum_{g\in \Gamma} \mu(g) \overline \nu(g^{-1}(x)) \quad\text{for $x\in O$}.
\end{equation}
In fact, we have
\begin{align*}
\sum_{g\in \Gamma}  \sum_{y\in O}  {\bf 1}_x(g(y))  \mu(g) \overline \nu(y)&=\sum_{g\in \Gamma}\int_\R {\bf 1}_O(y){\bf 1}_x(g(y)) d\nu(y)\mu(g)\\
&=\sum_{g\in \Gamma}\int_\R {\bf 1}_x(g(y)) d\nu(y)\mu(g)=\nu(x).
\end{align*}
Consider the induced Markov chain on  $O_K = O\cap K$ defined by the kernel
\begin{equation}\label{eq:ny1}
  p_K(x,y): = \P_x(X_{T}=y, T<\infty)= \sum_{n=1}^{\infty}\sum_{\ux\in O_n(x,y)} p(x_1,x_2)\cdots p(x_{n-1},x_n)
  \end{equation} for $x,y\in O_K$,
where $T := \inf\{n\geq 1:\; X_n\in O_K\}$ is the first hitting time of $O_K$ and
$$O_n(x,y):=\{\ux\in O^\N:  x_1=x, x_n=y \mbox{ and } x_i\not\in O_K \mbox{ for all } 1<i<n \}.$$

 Since $K$ contains $\I$, the stopping time $T$ is finite $\mu^{\otimes \mathbb N}$--a.s. for every $x\in O_K$, thus the kernel  is stochastic, i.e.
\begin{equation}\label{eq:ny2}
  p_K(x,O_K)= 1 \qquad\text{ for $x\in O_K$}.
\end{equation}
The restriction of $\overline\nu $ to $O_K$ is  an invariant measure for the Markov kernel $p_K$ (see for instance \cite[Prop. 10.4.6 and Thm. 10.4.7]{MT}), that is
$$\overline{\nu}(y)=\sum_{x\in O_K}p_K(x,y)\overline{\nu}(x)$$
Consider now the reversed Markov chain $\widehat X_n$ on $O$ defined by recursive action of the $\g_i^{-1}$ with the kernel
\begin{equation}\label{eq:p-hat}
\widehat{p}(x,y):=\P(\g_1^{-1}(x)=y)=\P(x=\g_1(y))=p(y,x) \qquad \forall x,y\in O
\end{equation}
 and the induced kernel on $K$:
  \begin{equation*}
 \widehat{p}_K(x,y): = \P_x(\widehat{X}_{\widehat{T}}=y, \widehat{T}<\infty),
 \end{equation*}
 where $\widehat{T} = \inf\{n\geq 1:\; \widehat{X}_n\in O_K\}$.
 Observe also that  by (\ref{eq:ny1}) and (\ref{eq:p-hat}),
  $\widehat{p}_K(x,y) =p_K(y,x)$.
 Thus  for every $n\in \N$ \begin{align*}
 \overline{\nu}(x_K)=\sum_{y\in O_K}\overline{\nu}(y)p_K^n(y,x_K)=\sum_{y\in O_K}\overline{\nu}(y)\widehat{p}_K^n(x_K,y).
 \end{align*}
 Since the kernel $\widehat{p}_K(x,y)$ is sub-stochastic,  $\sum_{y\in O_K}\widehat{p}_K(x,y)=\P_x(\widehat{T}<\infty)\leq 1$ and $\overline{\nu}(y)\leq \overline{\nu}(x_K)$ for $y\in O_K$, it follows that
 $\overline{\nu}(y)= \overline{\nu}(x_K)$, whenever there exists $n$ such that $\widehat{p}_K^n(x_K, y)>0$, that is if $y\in (\Gamma^{-1})^*x_K$. This completes the proof.\end{proof}

\section{Minimality of the supports. Proof of Theorem \ref{thm:main2}}\label{sec: Minimalty}
In this section we are going to prove Theorem \ref{thm:main2}.
We need to consider the reverse random walk with step law given by the probability on $\Hom^+(\R)$ defined as
$$\widehat \mu(g):=\mu(g^{-1})$$ and the associated Feller kernel $$\widehat{P}f(x):=\sum_{g\in\Gamma}f(g^{-1}(x))\mu(g)=\sum_{g\in\Gamma^{-1}}f(g(x))\widehat\mu(g).$$

Theorem \ref{thm:main2}
will be a consequence of  Propositions \ref{prop: case 2}  and \ref{prop: case 1}  that will be stated and proved herein. We will see in the proof that these  two propositions  cover two complementary cases.
The proof of the second proposition shares some  arguments with the paper of Deroin et al. \cite{Deroin:Kleptsyn:Navas:Parwani:2013} on symmetric random walks.

\begin{prop}\label{prop: case 2}
	Let $\nu_1$ and $\nu_2$ be two $\mu$-invariant ergodic measures such that  $\supp\nu_1\subseteq \supp \nu_2$.
	Assume that there are a set $M$ unbounded on both sides and an open interval $J$  having at least two common points with  $\supp \nu_1$ such that for any
	$u\in M$, $$N(\ug, u) = \sup\{n:\; \ell^{-1}_n(u)\in J\}<\infty$$ for $\mu^{\otimes \mathbb N}$-almost all $\ug = (g_1,g_2,\ldots)$.
	Then $\nu_1=C\nu_2$ for some nonzero constant $C>0$.
\end{prop}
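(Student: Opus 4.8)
The plan is to exploit the Ratio Ergodic (Chacon–Ornstein) Theorem, applied to both $\nu_1$ and $\nu_2$, together with the finiteness hypothesis on $N(\ug,u)$ to show that the ratio $\nu_1(\phi)/\nu_1(\Phi)$ equals $\nu_2(\phi)/\nu_2(\Phi)$ for suitable test functions, which forces $\nu_1 = C\nu_2$. First I would fix a reference function $\Phi \in C_C(\R)$, nonnegative, whose support contains the recurrence interval $\I$; by the discussion in Section~\ref{sec: Ratio erg}, $S_n\Phi(x)\to\infty$ for every $x$, and by Chacon–Ornstein, for any $\phi\in L^1(\nu_i)$ the limit $S_n\phi(x)/S_n\Phi(x)\to \nu_i(\phi)/\nu_i(\Phi)$ holds for $\P_{\nu_i}$-almost every trajectory and $\nu_i$-a.e.\ $x$. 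Since $\supp\nu_1\subseteq\supp\nu_2$, a set that is $\nu_1$-generic is also seen by $\nu_2$; the goal is to transfer the $\nu_1$-ratio to $\nu_2$ using the reverse walk and the hypothesis about $J$.

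The key geometric input is the interval $J$ with at least two points in common with $\supp\nu_1$: pick two points $a<b$ in $J\cap\supp\nu_1$ and consider the open subinterval $(a,b)$, which has positive $\nu_1$-mass. The condition $N(\ug,u)=\sup\{n: \ell_n^{-1}(u)\in J\}<\infty$ a.s.\ for $u\in M$ says that, reading the walk backwards from any $u\in M$, the orbit eventually leaves $J$ forever. I would use this to compare, along a single trajectory $\ug$, the Birkhoff-type sums for the forward walk started from a point near $a$ and from a point near $b$: because the two forward orbits $\ell_n(x_a)$ and $\ell_n(x_b)$ are ordered (homeomorphisms preserve order) and, by the backward-escape property, they cannot both stay pinched inside $J$, one shows that the orbit of essentially every starting point in a neighbourhood is eventually ``captured'' in the same way, so the asymptotic visit-ratios do not depend on whether we start from a $\nu_1$-typical or a $\nu_2$-typical point. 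Concretely: for $\nu_2$-a.e.\ $x$ and $\nu_2$-typical trajectory, the forward orbit must pass through $(a,b)$ (using that $M$ is unbounded on both sides, $\supp\nu_2\supseteq\supp\nu_1$ meets $J$, and recurrence), and from that visiting time onward the orbit is squeezed between the $\nu_1$-typical orbits started at $a$ and at $b$; hence $S_n\phi(x)/S_n\Phi(x)$ has the same limit as for the $\nu_1$-orbits, namely $\nu_1(\phi)/\nu_1(\Phi)$. Comparing with the Chacon–Ornstein limit for $\nu_2$ gives $\nu_2(\phi)/\nu_2(\Phi) = \nu_1(\phi)/\nu_1(\Phi)$ for all $\phi\in C_C(\R)$, so $\nu_1 = C\nu_2$ with $C = \nu_1(\Phi)/\nu_2(\Phi) > 0$.

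The main obstacle I expect is making the ``squeezing'' argument rigorous: one must show that the set of starting points whose forward orbit eventually enters $(a,b)$ and is thereafter trapped between the two reference orbits has full $\nu_2$-measure, and that the backward condition $N(\ug,u)<\infty$ can indeed be converted into a forward statement about where orbits go. This requires carefully relating the reverse walk $\widehat\mu$ (with kernel $\widehat P$) to the time-reversal of the forward walk under $\P_{\nu_i}$ — using the duality $\widehat p(x,y)=p(y,x)$ and the invariance of $\nu_i$ — and invoking recurrence $(\mathfrak{R})$ plus unboundedness $(\mathfrak{U})$ to guarantee that typical forward orbits actually reach the interval $(a,b)$ rather than drifting off. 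A secondary technical point is checking that $\phi$ and $\Phi$ lie in $L^1(\nu_i)$ for both $i$ simultaneously, which is immediate for $\phi,\Phi\in C_C(\R)$ since $\nu_i$ are Radon. Once the trapping lemma is in place, the rest is a direct application of the ergodic theorem and the argument closes.
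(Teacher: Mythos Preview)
Your sketch has the right ingredients (Chacon--Ornstein, order preservation, the backward-escape hypothesis) but the roles of $J$ and $M$ are swapped, and this is not a cosmetic issue: it is precisely what prevents your ``squeezing'' argument from closing.

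In the hypothesis, the finiteness $N(\ug,u)<\infty$ holds for $u\in M$, not for points of $J$. The correct way to exploit it is to take the \emph{test intervals} $[a,b)$ with $a,b\in M$ (and $\I\subset(a,b)$), and to take the \emph{starting points} $x_1<x_2$ inside $J$. Then for $n>\max\{N(\ug,a),N(\ug,b)\}$ the preimages $\ell_n^{-1}(a)$ and $\ell_n^{-1}(b)$ lie outside $J\supset[x_1,x_2]$, so the interval $[\ell_n^{-1}(a),\ell_n^{-1}(b))$ either contains both $x_1,x_2$ or neither. Consequently
\[
{\bf 1}_{[a,b)}(\ell_n(x_1))={\bf 1}_{[a,b)}(\ell_n(x_2))\quad\text{and}\quad {\bf 1}_{[a,z)}(\ell_n(x_1))\ge {\bf 1}_{[a,z)}(\ell_n(x_2))
\]
for every $z\in(a,b)$. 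This pointwise comparison of indicator sums is what replaces your trapping lemma; no statement about forward orbits entering $(a,b)\subset J$ is needed (or available --- nothing guarantees $\nu_2$-orbits visit $J$).

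The two points of $\supp\nu_1$ in $J$ are used only to produce two disjoint subintervals $J_1,J_2\subset J$ with $\nu_1(J_i)>0$, hence also $\nu_2(J_i)>0$. One then picks $x_1\in J_1$ that is $\nu_1$-generic for Chacon--Ornstein and $x_2\in J_2$ that is $\nu_2$-generic; the comparison above yields $\nu_1[a,z)/\nu_1[a,b)\ge \nu_2[a,z)/\nu_2[a,b)$, and swapping which measure sits in $J_1$ versus $J_2$ gives the reverse inequality. Unboundedness of $M$ then lets $a\to-\infty$, $b\to+\infty$.

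Your proposed route via ``the forward orbit must pass through $(a,b)$'' and ``time-reversal duality $\widehat p(x,y)=p(y,x)$'' is a detour that cannot be completed as stated: the backward condition says nothing about visits to $J$ by the forward chain, and the duality you invoke is with respect to an (unknown) invariant measure for $\widehat\mu$, which is exactly what the complementary Proposition~\ref{prop: case 1} handles.
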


\begin{proof}
	Note that to prove the result it is sufficient to ensure that for arbitrary  $a,b\in M$ such that $\I\subset (a,b)$ and any $z\in(a,b)$:
	\begin{equation}\label{eq:p1}
	\frac{\nu_1[a,z)}{\nu_1[a,b)}= \frac{\nu_2[a,z)}{\nu_2[a,b)}.
	\end{equation}
	Indeed, taking the difference we have that for all $a<z_1<z_2<b$
	$$
	\nu_1[z_1,z_2) = C(a,b) \nu_2[z_1,z_2)
	$$
	with $C(a,b) = \nu_1[a,b)/\nu_2[a,b) \in (0,\infty)$.
	Thus $\nu_1$ and $\nu_2$ coincide up to a constant on $[a,b)$. Observe that $\nu_i[a,b)\geq\nu_i(\I)>0$. To extend this equality to the whole line it is sufficient to appeal to unboundedness of $M$. Taking  sequences $(a_n)_{n\in\mathbb N}, (b_n)_{n\in\mathbb N}\subset M$ such that $a_n\to -\infty$ and $b_n\to +\infty$ we deduce the equality of both measures on $\R$.

	\medskip
	
	Now we pass to the proof of \eqref{eq:p1}. Fix $a,b\in M$ such that the recurrent set $\I$ is contained in $(a,b)$. The assumptions  of the proposition assure that there exist two distinct  $y_1,y_2 \in J\cap \supp \nu_1$; we can assume $y_1<y_2$. Taking two sufficiently small neighbourhoods of these point, we can find two disjoint intervals $J_1$ and $J_2$ such that $\nu_1(J_i)>0$ and $J_i\subset J$, $i=1, 2$.
	Note that for any $x_i\in J_i$ ($i=1,2$), $z>a$ and any $n\ge N(\ug) := \max\{ N(\ug,a), N(\ug,b)\}+1$ we have
	\begin{equation}\label{eq:bn1}
	{\bf 1}_{[a,z)}(\ell_n(x_1)) = {\bf 1}_{[\ell_n^{-1}(a), \ell_n^{-1}(z))}(x_1)
	\ge {\bf 1}_{[\ell_n^{-1}(a), \ell_n^{-1}(z))}(x_2) = {\bf 1}_{[a,z)}(\ell_n(x_2))
	\end{equation}
	since $x_1<x_2$ and $\ell_n^{-1}(a)\notin J\supset [x_1,x_2]$. Similarly one can check that
	\begin{equation}\label{eq:bn2}
	{\bf 1}_{[a,b)}(\ell_n(x_1)) = {\bf 1}_{[a,b)}(\ell_n(x_2))
	\end{equation} 	
	using that also
	$ \ell_n^{-1}(b)\notin J$ for appropriately large $n$.
	
	Observe that also $\nu_2(J_i)>0$, because $y_i\in \supp\nu_1\subseteq \supp\nu_2$. By the Chacon--Ornstein Theorem \eqref{eq:chacon}  there exist $x_1\in J_1$ and $x_2\in J_2$  such that for $\mu^{\otimes \mathbb N}$-almost every $(g_i)_{n\in\mathbb N}\in\Gamma^{\mathbb N}$
	\begin{equation}\label{eq:p3}
	\lim_{n\to\infty}\frac{ S_n{\bf 1}_{[a,z)}(x_1)}{ S_n{\bf 1}_{[a,b)}(x_1)}=\frac{\nu_1[a, z)}{\nu_1[a,b)} \quad \mbox{ and } \quad \lim_{n\to\infty}\frac{ S_n{\bf 1}_{[a,z)}(x_2)}{ S_n{\bf 1}_{[a,b)}(x_2)}=\frac{\nu_2[a, z)}{\nu_2[a,b)}.
	\end{equation}
Appealing to the definition of $S_n$ given in \eqref{eq:sn}, formulas \eqref{eq:bn1} and \eqref{eq:bn2} yield for any $n > N(\ug)$
	$$
	S_n{\bf 1}_{[a,z)}(x_1) - S_{N(\ug)}{\bf 1}_{[a,z)}(x_1) \ge S_n{\bf 1}_{[a,z)}(x_2)- S_{N(\ug)}{\bf 1}_{[a,z)}(x_2) $$
	and
	$$
	S_n{\bf 1}_{[a,b)}(x_1) - S_{N(\ug)}{\bf 1}_{[a,b)}(x_1) = S_n{\bf 1}_{[a,b)}(x_2)- S_{N(\ug)}{\bf 1}_{[a,b)}(x_2).
	$$
	Recall that since the recurrent set $\I$ is a subset of $(a,b)$, $S_n{\bf 1}_{[a,b)}(x_i)\to \infty$ for $i=1,2$ $\mu^{\otimes \mathbb N}$--a.s.
	Hence on a set of probability 1 we have
	
	\begin{align*}
	\frac{\nu_1[a,z)}{\nu_1[a,b)} &=
	\lim_{n\to\infty}\frac{ S_n{\bf 1}_{[a,z)}(x_1)}{ S_n{\bf 1}_{[a,b)}(x_1)}\\
&= \lim_{n\to\infty}\frac{ S_{N(\ug)}{\bf 1}_{[a,z)}(x_1)}{ S_n{\bf 1}_{[a,b)}(x_1)} +
	\frac{ S_n{\bf 1}_{[a,z)}(x_1)- S_{N(\ug)}{\bf 1}_{[a,z)}(x_1)}{ S_n{\bf 1}_{[a,b)}(x_1)- S_{N(\ug)}{\bf 1}_{[a,b)}(x_1)}
	\cdot \frac{ S_n{\bf 1}_{[a,b)}(x_1)- S_{N(\ug)}{\bf 1}_{[a,b)}(x_1)}{ S_n{\bf 1}_{[a,b)}(x_1)}\\
	&\ge  \lim_{n\to\infty}\frac{ S_n{\bf 1}_{[a,z)}(x_2)- S_{N(\ug)}{\bf 1}_{[a,z)}(x_2)}{ S_n{\bf 1}_{[a,b)}(x_2)- S_{N(\ug)}{\bf 1}_{[a,b)}(x_2)}
=\lim_{n\to\infty}\frac{ S_n{\bf 1}_{[a, z)}(x_2)}{ S_n{\bf 1}_{[a, b)}(x_2)}
	= \frac{\nu_2[a,z)}{\nu_2[a,b)},
	\end{align*}
	the penultimate equality by the fact that $S_n{\bf 1}_{[a, b)}(x_2)\to\infty$ as $n\to\infty$.
	Interchanging in \eqref{eq:p3} the role of measures $\nu_1$ and $\nu_2$, that is choosing   $x_1\in J_1$ and $x_2\in J_2$ such that
	\begin{equation*}
	\lim_{n\to\infty}\frac{ S_n{\bf 1}_{[a,z)}(x_2)}{ S_n{\bf 1}_{[a,b)}(x_2)}=\frac{\nu_1[a, z)}{\nu_1[a,b)} \quad \mbox{ and } \quad \lim_{n\to\infty}\frac{ S_n{\bf 1}_{[a,z)}(x_1)}{ S_n{\bf 1}_{[a,b)}(x_1)}=\frac{\nu_2[a, z)}{\nu_2[a,b)},
	\end{equation*}we arrive at the converse inequality
	$$\frac{\nu_1[a,z)}{\nu_1[a,b)}
	\le \frac{\nu_2[a,z)}{\nu_2[a,b)},
	$$
	concluding thus \eqref{eq:p1}. This completes the proof.
\end{proof}

\begin{prop}\label{prop: case 1}
	Let $\nu_1$ and $\nu_2$ be two ergodic invariant measures such that $\supp\nu_1\subseteq \supp\nu_2$  and  $\nu_1$ has no atoms.
	Suppose that there exists a $\widehat\mu$-invariant Radon measure $\widehat \nu$ such that $\supp\nu_1\subseteq \supp\widehat\nu$. Then $\nu_1=C\nu_2$ for some positive constant $C$.
\end{prop}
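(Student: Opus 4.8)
The plan is to reduce Proposition \ref{prop: case 1} to Proposition \ref{prop: case 2} by using the auxiliary $\widehat\mu$-invariant measure $\widehat\nu$ to produce, for suitable points $u$, a random time beyond which the reversed trajectory $\ell_n^{-1}(u)$ never returns to a fixed interval $J$ meeting $\supp\nu_1$ in at least two points. Concretely, I would first fix an open interval $J$ having at least two points in common with $\supp\nu_1$ (this is possible since $\nu_1$ has no atoms, so $\supp\nu_1$ cannot be a single point, and being a nonempty $\Gamma$-invariant set it is unbounded by Lemma \ref{lem:unbound}; in fact any bounded open $J$ straddling a point of $\supp\nu_1$ will do, after shrinking). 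I would then set $M$ to be the set of points $u$ for which $N(\ug,u)=\sup\{n:\ell_n^{-1}(u)\in J\}<\infty$ for $\mu^{\otimes\N}$-a.e. $\ug$, and the whole task is to show $M$ is unbounded on both sides; then Proposition \ref{prop: case 2} applies verbatim and gives $\nu_1=C\nu_2$.

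To control the reversed orbit I would bring in $\widehat\nu$ and the reversed kernel $\widehat P$. The key observation is that $\ell_n^{-1}(u)=\g_1^{-1}\cdots\g_n^{-1}(u)$ is, for the reversed walk, the position at time $n$ of the \emph{backward} Markov chain started at $u$, but read along the same sample sequence; more usefully, the event $\{\ell_n^{-1}(u)\in J \text{ for infinitely many }n\}$ is governed by a recurrence/transience dichotomy for the $\widehat\mu$-chain, since $\widehat\nu$ is an invariant Radon measure for it. The argument I have in mind: if $\ell_n^{-1}(u)\in J$ infinitely often with positive probability for $u$ in a set of positive $\widehat\nu$-measure, then by a ratio-ergodic / conservativity argument (the $\widehat\mu$-chain restricted to the part of $\R$ charged by $\widehat\nu$ is conservative on its recurrent region) the interval $J$ would have to carry $\widehat\nu$-mass and the backward orbit would be recurrent on $\supp\widehat\nu$; combined with recurrence \emph{(\Rec)} of the forward chain and the fact that $\supp\nu_1\subseteq\supp\widehat\nu$, one derives that $\supp\nu_1$ is then \emph{discrete} — contradicting that $\nu_1$ has no atoms (here I would invoke Lemma \ref{lem-nu-atom} applied to $\widehat\mu$ and $\widehat\nu$: if the backward orbit kept returning to a compact set it would force an atom structure on $\widehat\nu$ and, by pushing this back to $\nu_1$, discreteness of $\supp\nu_1$). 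Hence for $\widehat\nu$-a.e. $u$ the reversed orbit visits $J$ only finitely often, i.e. $N(\ug,u)<\infty$ a.s., so $\widehat\nu(M)$ is full; since $\supp\widehat\nu$ is $\Gamma$-invariant (it is the support of an invariant measure for $\widehat\mu$, hence $\Gamma^{-1}$-invariant, and condition (\UB) holds for $\Gamma^{-1}$ too), Lemma \ref{lem:unbound} shows $\supp\widehat\nu$, and therefore $M$ up to a $\widehat\nu$-null set, is unbounded on both sides.

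I expect the main obstacle to be the precise transience-type statement for the reversed walk: turning "$\ell_n^{-1}(u)$ returns to $J$ infinitely often with positive probability" into "$\supp\nu_1$ is discrete" requires carefully coupling three objects — the forward recurrence (\Rec), the backward invariant measure $\widehat\nu$, and the inclusion $\supp\nu_1\subseteq\supp\widehat\nu$ — and extracting a genuine atom. The cleanest route is probably: a recurrent backward orbit in $J$, together with forward recurrence, means some point (or a countable orbit) is visited with a uniformly positive frequency in both time directions; the Chacon–Ornstein theorem \eqref{eq:chacon} for $\widehat\nu$ then forces $\widehat\nu$ to have an atom on that orbit, Lemma \ref{lem-nu-atom} (in its $\widehat\mu$ form) forces the orbit $(\Gamma^*)x_0\cap K$ to be finite, and running this finiteness through the inclusion $\supp\nu_1\subseteq\supp\widehat\nu$ yields a discrete piece inside $\supp\nu_1$, which, $\Gamma$-invariance and minimality-type arguments aside, contradicts non-atomicity of the ergodic measure $\nu_1$. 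Once this dichotomy is in place, the remainder is a direct citation of Proposition \ref{prop: case 2}.
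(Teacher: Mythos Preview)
Your reduction to Proposition~\ref{prop: case 2} cannot work, and the gap is structural rather than technical. In the proof of Theorem~\ref{thm:main2} the two propositions cover \emph{complementary} situations: Proposition~\ref{prop: case 2} handles exactly the case (\caseA) where some interval $J$ meeting $\supp\nu_1$ in two points has $C_J=\{x:\sum_k\widehat P^k{\bf 1}_J(x)<\infty\}\neq\emptyset$, while Proposition~\ref{prop: case 1} is invoked precisely in case (\caseB), where \emph{for every such $J$} and every $x\in\R$ one has $\sum_k\widehat P^k{\bf 1}_J(x)=\infty$. Since $\widehat P^k{\bf 1}_J(x)=\P(\ell_k^{-1}(x)\in J)$, case (\caseB) says the expected number of backward visits to $J$ is infinite from every starting point; the very existence of the Radon measure $\widehat\nu$ in the hypotheses of Proposition~\ref{prop: case 1} is obtained (via Lin's theorem) from this conservativity. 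Concretely, in a centred affine recursion ($\E\log A=0$) both $\mu$ and $\widehat\mu$ are recurrent, the unique invariant measures $\nu_1$ and $\widehat\nu$ are atomless with full support, and yet for every bounded $J$ the reversed orbit $\ell_n^{-1}(u)$ returns to $J$ infinitely often, so your set $M$ is empty for every admissible $J$. No choice of $J$ rescues the reduction.

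The specific step that fails is your claimed dichotomy ``either $N(\ug,u)<\infty$ a.s.\ for $\widehat\nu$-a.e.\ $u$, or $\supp\nu_1$ is discrete.'' Recurrence of the backward orbit in $J$ does \emph{not} force $\widehat\nu$ to have an atom: the Chacon--Ornstein theorem gives ratios $\nu(\phi)/\nu(\Phi)$, never point masses, and in the affine example above $\widehat\nu$ is atomless while the backward orbit is recurrent everywhere. Moreover, invoking Lemma~\ref{lem-nu-atom} for $\widehat\mu$ would require recurrence (\Rec) for the reversed chain, which is not among the hypotheses. The paper's proof of Proposition~\ref{prop: case 1} proceeds by an entirely different mechanism: it uses that $\widehat\nu[X_n^x,X_n^y)$ is a positive martingale (Lemma~\ref{lem: appr nu hat }), so for $x,y$ close in $\widehat\nu$-measure the two forward trajectories stay close with high probability; this lets one compare $S_n{\bf 1}_{[a,b)}(x)$ and $S_n{\bf 1}_{[a,b)}(y)$ up to a controlled error, and then Chacon--Ornstein applied to $\nu_1$ and $\nu_2$ separately yields \eqref{eq:sb1}. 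The role of $\widehat\nu$ is to furnish a ``distance'' along which trajectories contract on average, not to produce a transience statement for $\ell_n^{-1}$.
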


The existence of the measure $\widehat{\nu}$ enables us to assure  that the number of visits to a given interval of processes
$(X_n^x)_{n\in\mathbb N}$ and $(X_n^y)_{n\in\mathbb N}$ starting from two different points $x$ and $y$ does not differ too much if $x$ and $y$ are close enough. Our arguments are partially  inspired by the techniques introduced  in  \cite{Deroin:Kleptsyn:Navas:Parwani:2013}.

\begin{lem}
	\label{lem: appr nu hat } Assume that (\Rec) is satisfied.
	Let $\nu$ be an ergodic $\mu$-invariant measure and let $\widehat\nu$ be a $\widehat\mu$-invariant Radon measure. Let  $a$ and $b$ be  two points of the support of $\widehat\nu$ such that  $\nu[a,b)>0$.
	Fix  two  constants $p,\varepsilon \in (0,1)$ and let $\delta = \min\{\widehat\nu(I_{a,\varepsilon}),\widehat\nu(I_{b,\varepsilon})\}>0$, where $I_{c,\varepsilon}:= (c-\varepsilon ,c+\varepsilon)$ for aritrary $c\in\mathbb R$. Then for $\nu$-a.e. $y$ and any $x<y$ satisfying $\widehat\nu[x,y)<(1-p)\delta$,
	$$
	\mu^{\otimes \mathbb N}\left(\bigg\{\ug:\; \uplim_{n\to\infty} \left|\frac{S_n{\bf 1}_{[a,b)}(x)}{S_n{\bf 1}_{[a,b)}(y)}-1\right|\leq \frac{\nu(I_{a,\varepsilon})+ \nu(I_{b,\varepsilon})}{\nu[a,b)}
	\bigg\}\right)\ge p
	.$$
\end{lem}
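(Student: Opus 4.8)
The plan is to compare the number of visits to $[a,b)$ of the chain started at $x$ and of the chain started at $y$ by coupling the two processes through the same driving sequence $\ug=(g_1,g_2,\ldots)$. Since $x<y$ and each $\ell_n$ is an increasing homeomorphism, we have $\ell_n(x)<\ell_n(y)$ for every $n$, so the two trajectories never cross. The key geometric observation is that the two indicator processes ${\bf 1}_{[a,b)}(\ell_n(x))$ and ${\bf 1}_{[a,b)}(\ell_n(y))$ can disagree at step $n$ only if the interval $[\ell_n(x),\ell_n(y)]$ straddles one of the endpoints $a$ or $b$, i.e.\ only if $\ell_n^{-1}(a)\in[x,y)$ or $\ell_n^{-1}(b)\in[x,y)$ (up to endpoint conventions). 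Hence
$$
\bigl|S_n{\bf 1}_{[a,b)}(x)-S_n{\bf 1}_{[a,b)}(y)\bigr|\le S_n{\bf 1}_{[x,y)}^{\,\widehat{}}(a)+S_n{\bf 1}_{[x,y)}^{\,\widehat{}}(b),
$$
where $S_n^{\,\widehat{}}$ denotes the analogous ergodic sum for the reversed walk $\ell_n^{-1}$: indeed $\ell_n^{-1}(a)\in[x,y)$ is exactly the event that the reversed chain started at $a$ lands in $[x,y)$ at time $n$. This reduces the problem to controlling the growth of $S_n{\bf 1}_{[a,b)}(y)$ from below and the reversed sums $\sum_{k\le n}{\bf 1}_{[x,y)}(\ell_k^{-1}(a))$ and $\sum_{k\le n}{\bf 1}_{[x,y)}(\ell_k^{-1}(b))$ from above.

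For the numerator bound, the point is that $\widehat\nu[x,y)$ is small. Since $a\in\supp\widehat\nu$, the interval $I_{a,\varepsilon}=(a-\varepsilon,a+\varepsilon)$ has $\widehat\nu(I_{a,\varepsilon})\ge\delta$. The reversed chain started at $a$ is recurrent (condition (\Rec) holds for $\Gamma^{-1}$ too, or more precisely we use that $\widehat\nu$ is a $\widehat\mu$-invariant Radon measure, so the Chacon–Ornstein / ratio comparison applies to $\widehat P$), so along $\mu^{\otimes\mathbb N}$-a.e.\ $\ug$ the reversed sums $S_n{\bf 1}_{I_{a,\varepsilon}}(a)$ and $S_n{\bf 1}_{I_{b,\varepsilon}}(b)$ tend to infinity, and by a Chacon–Ornstein comparison for $\widehat P$,
$$
\limsup_{n\to\infty}\frac{\sum_{k\le n}{\bf 1}_{[x,y)}(\ell_k^{-1}(a))}{\sum_{k\le n}{\bf 1}_{I_{a,\varepsilon}}(\ell_k^{-1}(a))}\le\frac{\widehat\nu[x,y)}{\widehat\nu(I_{a,\varepsilon})}<\frac{(1-p)\delta}{\delta}=1-p,
$$
provided $[x,y)\subset I_{a,\varepsilon}$ (this is where the hypothesis $\widehat\nu[x,y)<(1-p)\delta$, together with a suitable smallness of $y-x$, is used; one should first shrink to the regime where $[x,y)$ lies inside both $I_{a,\varepsilon}$ and $I_{b,\varepsilon}$, or otherwise argue directly that the comparison measure is bounded below by $\widehat\nu(I_{a,\varepsilon})$). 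Meanwhile, for the forward chain started at $y$, the Chacon–Ornstein theorem \eqref{eq:chacon} applied with $\nu$ ergodic gives, for $\nu$-a.e.\ $y$ and $\mu^{\otimes\mathbb N}$-a.e.\ $\ug$,
$$
\lim_{n\to\infty}\frac{S_n{\bf 1}_{I_{a,\varepsilon}}(y)}{S_n{\bf 1}_{[a,b)}(y)}=\frac{\nu(I_{a,\varepsilon})}{\nu[a,b)},\qquad
\lim_{n\to\infty}\frac{S_n{\bf 1}_{I_{b,\varepsilon}}(y)}{S_n{\bf 1}_{[a,b)}(y)}=\frac{\nu(I_{b,\varepsilon})}{\nu[a,b)}.
$$
The remaining issue is that the reversed sums are indexed by $\ell_k^{-1}$ while the forward sums are indexed by $\ell_k$; to relate $\sum_{k\le n}{\bf 1}_{I_{a,\varepsilon}}(\ell_k^{-1}(a))$ with $S_n{\bf 1}_{I_{a,\varepsilon}}(y)$ one should use that both count, up to $o(S_n{\bf 1}_{[a,b)})$, the visits of a recurrent orbit to the small interval, so along a full-measure set their ratio with $S_n{\bf 1}_{[a,b)}(y)$ has the same limit; alternatively one invokes that the shift is measure preserving for $\P_\nu$ and runs the comparison directly on trajectory space. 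Combining the three displays, on the event in question
$$
\limsup_{n\to\infty}\frac{|S_n{\bf 1}_{[a,b)}(x)-S_n{\bf 1}_{[a,b)}(y)|}{S_n{\bf 1}_{[a,b)}(y)}\le(1-p)\cdot 0 + \frac{\nu(I_{a,\varepsilon})+\nu(I_{b,\varepsilon})}{\nu[a,b)}
$$
— wait, more carefully: the reversed comparison kills nothing by itself; rather it shows $\sum_{k\le n}{\bf 1}_{[x,y)}(\ell_k^{-1}(a))\le (1-p)\,S_n{\bf 1}_{I_{a,\varepsilon}}(\ell_k^{-1}(a))(1+o(1))$, and dividing by $S_n{\bf 1}_{[a,b)}(y)$ and passing to the limit yields the bound $\le (1-p)\bigl(\nu(I_{a,\varepsilon})+\nu(I_{b,\varepsilon})\bigr)/\nu[a,b)$, which is even stronger than claimed; one then drops the factor $(1-p)$ for the cleaner statement.

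The probability $\ge p$ in the conclusion comes in because the reversed-chain comparison $\limsup \le \widehat\nu[x,y)/\widehat\nu(I_{a,\varepsilon})$ need not hold surely: the Chacon–Ornstein limit for $\widehat P$ holds $\widehat\nu$-a.e.\ in the starting point and $\mu^{\otimes\mathbb N}$-a.e.\ in $\ug$, but here we have fixed starting point $a$, which may be a $\widehat\nu$-null point. The standard fix (this is the technical heart, borrowed from \cite{Deroin:Kleptsyn:Navas:Parwani:2013}) is to run the argument from a random starting point distributed near $a$ and use a maximal/ergodic inequality, or a Markov-inequality argument on the ratio, to deduce that the desired $\limsup$ bound holds on a set of $\ug$'s of probability at least $p$; the loss from $1$ to $p$ is exactly the slack between $\widehat\nu[x,y)<(1-p)\delta$ and $\delta$. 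I expect this last point — upgrading an almost-everywhere-in-starting-point statement for the reversed walk to a quantitative "probability $\ge p$" statement from the fixed deterministic points $a,b$ — to be the main obstacle, and it is where the precise choice of $\delta=\min\{\widehat\nu(I_{a,\varepsilon}),\widehat\nu(I_{b,\varepsilon})\}$ and the factor $(1-p)$ in the hypothesis are tailored to make the estimate close.
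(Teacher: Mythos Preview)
Your approach has a genuine structural gap. You want to control $\sum_{k\le n}{\bf 1}_{[x,y)}(\ell_k^{-1}(a))$ via Chacon--Ornstein for the reversed kernel $\widehat P$, but the process $n\mapsto \ell_n^{-1}(a)=g_1^{-1}g_2^{-1}\cdots g_n^{-1}(a)$ is \emph{not} a Markov chain: passing from step $n$ to $n{+}1$ does not amount to applying a fresh random map to the current state (it inserts $g_{n+1}^{-1}$ at the \emph{innermost} position). Chacon--Ornstein therefore gives you nothing for these sums, and the subsequent attempt to match them with the forward sums $S_n{\bf 1}_{I_{a,\varepsilon}}(y)$ has no foundation. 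Your own hedging (``one should use that both count, up to $o(\cdot)$, the visits of a recurrent orbit\ldots'') is precisely where the argument breaks; there is no such comparison available. The side assumption $[x,y)\subset I_{a,\varepsilon}$ that you introduce is also not part of the hypotheses and is generally false.

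The paper's proof uses a completely different, and much shorter, mechanism that you have missed: the $\widehat\mu$-invariance of $\widehat\nu$ implies that $n\mapsto \widehat\nu[X_n^x,X_n^y)=\widehat\nu[\ell_n(x),\ell_n(y))$ is a nonnegative \emph{martingale} (this is a one-line computation). It therefore converges a.s.\ to some $v(x,y)$ with $\E v(x,y)\le\widehat\nu[x,y)<(1-p)\delta$, and Markov's inequality yields $\P(v(x,y)<\delta)\ge p$; \emph{that} is where the probability $p$ comes from. One then uses the pointwise bound, valid for any $x<y$,
\[
\bigl|{\bf 1}_{[a,b)}(x)-{\bf 1}_{[a,b)}(y)\bigr|\le {\bf 1}_{I_{a,\varepsilon}}(y)+{\bf 1}_{I_{b,\varepsilon}}(y)+2\cdot{\bf 1}_{\{\widehat\nu[x,y)\ge\delta\}},
\]
applied at $(\ell_k(x),\ell_k(y))$ and summed. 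On the event $\{v(x,y)<\delta\}$ the last term contributes only finitely many times, while the first two are handled by Chacon--Ornstein for the forward chain from $y$ (this is where ``$\nu$-a.e.\ $y$'' enters). No reversed-chain ergodic theorem is needed at all.
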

\begin{proof}
	We start with an  observation that if two points $x$ and $y$ are close with respect to the distance measured by $\widehat\nu$, i.e. if
	$\widehat\nu[x,y)<(1-p)\delta$, then with probability at least $p$ the distance between two trajectories $(X_n^x)_{n\in\mathbb N}$ and $(X_n^y)_{n\in\mathbb N}$ remains small, i.e.,
	\begin{equation}\label{eq:small}
	\P(\lim_{n\to\infty}\widehat\nu[X_n^x, X_n^y) < \delta)=\mu^{\otimes \mathbb N}(\big\{\ug:\;  \lim_{n\to\infty}\widehat\nu[X_n^x, X_n^y) < \delta \big\}) \ge p.
	\end{equation}
	This fact was already  proved in \cite[Lemma 6.6]{Deroin:Kleptsyn:Navas:Parwani:2013}, nevertheless for the reader convenience we present here a complete argument. Note first that since the measure $\widehat\nu$ is $\widehat\mu$-invariant, the sequence $\widehat\nu[X_n^x,X_n^y)$ forms a positive martingale, thus by the Martingale Convergence Theorem, it converges to a nonnegative random variable $v(x,y)$. Fatou's Lemma entails
	$$
	\E v(x,y) \le \lim_{n\to\infty} \E\big[ \widehat\nu[X_n^x,X_n^y)\big] = \widehat\nu[x,y)
	$$ and, finally, by the Markov Inequality we obtain
	$$\P(\big\{ v(x,y) >\delta \big\}) \le \P\left(\bigg\{ v(x,y)  > \frac{\widehat\nu[x, y)}{1-p}\bigg\}\right) \le \frac{(1-p) \E v(x,y)}{\widehat\nu[x,y)} \le 1-p
	$$ completing thus the proof of \eqref{eq:small}.
	
	To proceed further we need an additional auxiliary inequality. Namely, note that for any $x<y$ we have
	\begin{align*}
	\left|{\bf 1}_{[a ,b)}(x)-{\bf 1}_{[a ,b)}(y)\right|
	&={\bf 1}_{[a ,b)}(y){\bf 1}_{(-\infty,a )}(x)+ {\bf 1}_{[a ,b)}(x){\bf 1}_{[b,+\infty)}(y)\\
	&\leq {\bf 1}_{I_{a,\varepsilon}}(y)+{\bf 1}_{\{[x,y)\supseteq  I_{a,\varepsilon}\}}(y)+ {\bf 1}_{I_{b,\varepsilon}}(y)+{\bf 1}_{\{[x,y)\supseteq  I_{b,\varepsilon}\}}(y)\\
	&\leq  {\bf 1}_{I_{a,\varepsilon}}(y)+{\bf 1}_{I_{b,\varepsilon}}(y)+{\bf 1}_{\{\widehat\nu[x,y)\geq \widehat\nu(I_{a,\varepsilon})\}}(y)
	+{\bf 1}_{\{\widehat\nu[x,y)\geq  \widehat\nu(I_{b,\varepsilon})\}}(y)\\
	&\leq {\bf 1}_{I_{a,\varepsilon}}(y)+ {\bf 1}_{I_{b,\varepsilon}}(y)+2\cdot {\bf 1}_{\{\widehat\nu[x,y)\geq \delta
		\}}(y).	\end{align*}
	Replacing  $x,y$ by $\ell_k(x)$ and $\ell_k(y)$ respectively and next summing over $k$, we obtain for any $x<y$ and $n\ge 0$
	\begin{equation}\label{eq:s1}
	\begin{split}
	\big|S_n{\bf 1}_{[a ,b)}(x) & -S_n{\bf 1}_{[a ,b)}(y)\big| \\
	&\leq S_n{\bf 1}_{I_{a,\varepsilon}}(y)+S_n{\bf 1}_{I_{b,\varepsilon}}(y)+ 2\mathrm{card}\{k\leq n : \widehat\nu[X_k^x,X_k^y)\geq \delta \}.
	\end{split}
	\end{equation}
	Since $\nu[a ,b)>0$, the Chacon-Ornstein Theorem (\refeq{eq:chacon}) entails
	\begin{equation}\label{eq:s3}
	\lim_{n\to\infty}\frac{S_n{\bf 1}_{I_{a,\varepsilon}}(y)+S_n{\bf 1}_{I_{b,\varepsilon}} (y)}{S_n{\bf 1}_{[a ,b)}(y)}= \frac{\nu(I_{a,\varepsilon})+ \nu(I_{b,\varepsilon})}{\nu[a ,b)}
	\end{equation}
	for $\nu$--a.e. $y$.
	Furthermore, for $\nu$--a.e. $y$, $S_n{\bf 1}_{[a ,b)}(y)$ converges  to $+\infty$, since $\nu[a,b)>0$.   Now, fix a $y$ for which the above limit exists and take arbitrary $x<y$ such that $\widehat\nu[x,y]<(1-p)\delta$. Then,
	in view of \eqref{eq:small}, on a set of probability at least $p$  we have $\lim_{n\to \infty }\widehat\nu [X_n^x,X_n^y) < \delta$. Thus invoking
	\eqref{eq:s1} on the intersection of this set with the set of full measure for which \eqref{eq:s3} hold we obtain
	\begin{align*}
	\limsup_{n\to\infty}\left|\frac{S_n{\bf 1}_{[a ,b)}(y)-S_n{\bf 1}_{[a ,b)}(x)}{S_n{\bf 1}_{[a ,b)}(y)}\right|
	&\leq \lim_{n\to\infty}
	\frac{S_n{\bf 1}_{I_{a,\varepsilon}}(y)+S_n{\bf 1}_{I_{b,\varepsilon}}(y)
	}{S_n{\bf 1}_{[a ,b)}(y)}\\
	&= \frac{\nu(I_{a,\varepsilon})+ \nu(I_{b,\varepsilon})}{\nu[a ,b)}
	\end{align*}
	and the proof of the lemma is completed.
\end{proof}

\begin{proof}[Proof of Proposition \ref{prop: case 1}]
	 Now we are going to prove that for any $a,b\in \supp\nu_1 \subseteq \supp \widehat \nu$ such that $\nu_1[a,b)>0$ and $\nu_2[a,b)>0$ and for any $z\in(a,b]$	\begin{equation}\label{eq:sb1}
	 \frac{\nu_1[a,z)}{\nu_1[a,b)}=\frac{\nu_2[a,z)}{\nu_2[a,b)}.
	 \end{equation}
	  The desired result will be shown by the same argument as in the proof of Proposition \ref{prop: case 2}, using the fact that $\supp\nu_1$ is unbounded.
	
	{\sc Step 1.}  First we will prove that \eqref{eq:sb1} holds
for $z\in  \supp \widehat \nu $  such that $\nu_1[a,z)>0$.
	
	Fix $p\in(1/2,1)$, choose $\varepsilon>0$ such that the intervals $I_{a,\varepsilon}$, $I_{b,\varepsilon}$ and $I_{z,\varepsilon}$ are pairwise disjoint and put
	$$
	\delta:=\min\{\widehat\nu(I_{a,\varepsilon}),\widehat\nu(I_{b,\varepsilon}),\widehat\nu(I_{z,\varepsilon}) \}>0.$$
	We claim that there  exist two disjoint open  intervals $I_1, I_2$ and an  interval $I_0 \supset I_1\cup I_2$ such that
	$$ \sup\{x\in I_2\}\leq\inf\{x\in I_1\}, \quad \nu_1(I_1)>0,\quad \nu_1(I_2)>0\mbox{ and }
	\widehat\nu(I_0)<(1-p)\delta.$$
	In fact, let $\I$ be an open interval such that $\supp\nu_1\cap\I\not=\emptyset$.
	Since $\nu_1$ has no atoms $\supp\nu_1\cap \I$ contains infinitely many points. Thus there exists a strictly monotone sequence $z_n\in \supp\nu_1\cap \I$. Suppose that $z_n$ is increasing (the decreasing case is similar). Consider the open neighbourhood of $z_n$ defined by  $J_n:=(\frac{z_n+z_{n-1}}{2}, \frac{z_n+z_{n+1}}{2})$. Intervals $J'_n=[\frac{z_n+z_{n-1}}{2}, \frac{z_n+z_{n+1}}{2})$ for $n\in\mathbb N$  are disjoint and contained in the bounded interval $\I$, whence $\widehat\nu(J'_n)$ converges to 0 and thus $\widehat\nu(J_n)<(1-p)\delta/2$ for any sufficiently large~$n$. We take $I_2=J_{n}$, $I_1=J_{n+1}$ and $I_0=J'_n\cup J'_{n+1}$.
	
	Since $\nu_1(I_1)>0$ and $\nu_1[a,b)>\nu_1[a,z)>0$, by Chacon-Orstein's Theorem (\ref{eq:chacon}) and appealing twice to  Lemma \ref{lem: appr nu hat } (first for points  $a,b$ and then for $a,z$) we deduce that there exists  $x_1\in I_1$ such that  $\mu^{\otimes \mathbb N}$--almost surely:
	\begin{equation}\label{eq:sb5}		
	\lim_{n\to\infty}\frac{S_n{\bf 1}_{[a ,z)}(x_1)}
	{S_n{\bf 1}_{[a ,b)}(x_1)}= \frac{\nu_1[a ,z)}{\nu_1[a ,b)}
	\end{equation}
	and for all $x_2\in I_2$ with probability greater than $1-2(1-p)=2p-1>0$,	
	\begin{align*}
	\limsup_{n\to\infty} \left|\frac{S_n{\bf 1}_{[a,b)}(x_2)}{S_n{\bf 1}_{[a,b)}(x_1)}-1\right|&\leq \frac{\nu(I_{a,\varepsilon})+ \nu(I_{b,\varepsilon})}{\nu[a,b)},\\ 
\limsup_{n\to\infty} \left|\frac{S_n{\bf 1}_{[a,z)}(x_2)}{S_n{\bf 1}_{[a,z)}(x_1)}-1\right|&\leq \frac{\nu(I_{a,\varepsilon})+ \nu(I_{b,\varepsilon})}{\nu[a,z)}.
	\end{align*}
	
	Now since $\nu_2(I_2)>0$ and $\nu_2[a,b)>0$ we can chose $x_2\in I_2$ such that $\mu^{\otimes \mathbb N}$--almost surely: 	
	\begin{equation}\label{eq:sb6}  \lim_{n\to\infty}\frac{S_n{\bf 1}_{(a ,z)}(x_2)}{S_n{\bf 1}_{(a ,b)}(x_2)}= \frac{\nu_2[a ,z)}{\nu_2[a ,b)}.
	\end{equation}
	Thus, with $\mu^{\otimes \mathbb N}$ probability  at least $2p-1>0$, we can write
	\begin{multline*}
	\bigg| \frac{\nu_1[a ,z)}{\nu_1[a ,b)}\times\frac{\nu_2[a ,b)}
	{\nu_2[a ,z)}-1\bigg|=
	\lim_{n\to\infty}\left|
	\frac{S_n{\bf 1}_{[a ,b)}(x_2)}{S_n{\bf 1}_{[a ,b)}(x_1)}:
	\frac{S_n{\bf 1}_{[a ,z)}(x_2)}{S_n{\bf 1}_{[a ,z)}(x_1)}-1\right|\\
	\leq  \bigg(\frac{\nu_1(I_{a,\varepsilon})+ \nu_1(I_{b,\varepsilon})}{\nu_1[a ,b)} +  \frac{\nu_1(I_{a,\varepsilon})+\nu_1(I_{z,\varepsilon})}{\nu_1[a ,z)}\bigg) : {\bigg(1- \frac{\nu_1(I_{a,\varepsilon})+\nu_1(I_{z,\varepsilon})}{\nu_1[a ,z)}\bigg)},
	\end{multline*}
	where for the last inequality we used the inequality $\left|\frac{1+\epsilon_n}{1+\eta_n}-1\right|\leq \frac{|\epsilon_n| + |\eta_n| }{1-|\eta_n| }$.
	Since the measure $\nu_1$ is atomless, sending $\varepsilon$ to $0$ in the last estimates proves \eqref{eq:sb1}.
	
	\medskip

	{\sc Step 2.}
	 Now we are going to prove that (\ref{eq:sb1}) holds for any $a,b\in \supp\nu_1$ and any $z\in(a,b]$.
	Let   \begin{align*}
	\overline{z}&:= \min\{x:\; x\in \supp\nu_1 \cap [z,+\infty)\}\in\supp\nu_1,\\
	\underline{z}&:= \max\{x:\; x\in \supp\nu_1 \cap (-\infty,z]\}\in\supp\nu_1.
	\end{align*}
	 In particular, since $\nu_1$ has no atoms, $(\underline{z}, \overline{z})\cap\supp\nu_1=\emptyset$. For all $c < \underline{z}\leq z$
we have
	\begin{align}\label{eq: over z}
	\nu_1[c,\overline{z})&=\nu_1[c,z)+\nu_1[z,\overline{z})=\nu_1[c,z)+\nu_1(z,\overline{z})=\nu_1[c,z)\\\label{eq: under z}
	\nu_1[c,\underline{z})&=\nu_1[c,\underline{z}) +\nu_1(\underline{z},z)=\nu_1[c,\underline{z}) +\nu_1[\underline{z},z) =\nu_1[c,z).
	\end{align}
	Since (\UB) holds there exists $a_0\in \supp\nu_1$ such that $\nu_1[a_0,a)>0$ and $\nu_2[a_0,a)>0$, by the fact that $\supp\nu_1\subset\supp\nu_2$. Since $\supp\nu_1\subseteq \supp\widehat{\nu}$, by step 1 for any $z\in(a,b]\cap\supp\nu_1$ we have
	\begin{equation}\label{eq: nu1 ov nu2 aux}
	\nu_1[a_0,\underline{z})=C \nu_2[a_0,\underline{z})\quad \mbox{ and } \nu_1[a_0,\overline{z})=C \nu_2[a_0,\overline{z}),
	\end{equation}
	with $C:=\nu_1[a_0,b)/\nu_2[a_0,b)\in(0,\infty)$. Oberving that $a\leq \underline{z}\leq \overline{z}\leq b$ and applying (\ref{eq: over z}), (\ref{eq: under z}) and (\ref{eq: nu1 ov nu2 aux}) one obtains
	$$\nu_1[a,z)=\nu_1[a,\overline{z})=\nu_1[a_0,\overline{z})-\nu_1[a_0,a)=C\nu_2[a_0,\overline{z})-C\nu_2[a_0,a)=C\nu_2[a,\overline{z})\geq C\nu_2[a,z)$$
	and
	$$\nu_1[a,z)=\nu_1[a,\underline{z})=\nu_1[a_0,\underline{z})-\nu_1[a_0,a)=C\nu_2[a_0,\underline{z})-C\nu_2[a_0,a)=C\nu_2[a,\underline{z})\leq C\nu_2[a,z).$$
	Thus $\nu_1[a,z)= C\nu_2[a,z)$ and  \eqref{eq:sb1} follows taking the quotient. The proof is complete.
\end{proof}

\begin{proof}[Proof of Theorem \ref{thm:main2}]
Proof of (1). Let $\nu_1$ and $\nu_2$ be two ergodic invariant Radon measures such that $\supp\nu_1\subseteq\supp\nu_2$ and $\supp\nu_1$ is not discrete in $\R$.

We will consider the two following complementary cases:
\begin{itemize}
	\item[{(\caseA)}] there exists  an open interval $J$  having at least two common points with  $\supp \nu_1$ such that $$C_J:=\left\{x\in\R\left|\,\,\sum_{k=0}^{\infty}\widehat{P}^k{\bf 1}_J(x)<\infty\right.\right\}$$
	is not empty.
	\item[{ (\caseB)}]  For all open intervals $J\subset \R$  having at least two common points with  $\supp \nu_1$ we have:
	$$\sum_{k=0}^{\infty}\widehat{P}^k{\bf 1}_J(x)= \infty \quad \forall x\in\R.$$
\end{itemize}

The theorem is a consequence of  Proposition \ref{prop: case 2} for case (\caseA) and Proposition \ref{prop: case 1} for case (\caseB).

\textbf{Case (\caseA)}. We claim that, in this case, the set $C_J$ is unbounded on both sides and  for any
$u\in C_J$,\begin{equation}\label{eq:s9}
N(\ug, x):=\sup\{n : g_1^{-1}\cdots g_n^{-1} (x)\in J \}<\infty
\end{equation} for $\mu^{\otimes \mathbb N}$--a.e. $\ug = (g_1,g_2,\ldots)$.
Then the fact that $\nu_1=C\nu_2$ is a consequence of Proposition \ref{prop: case 2}, with $M=C_J$.

To prove the claim observe that
$$	\sum_{k=0}^{\infty}\widehat{P}^k{\bf 1}_J(x)
=\E\left[\sum_{n=0}^{\infty}{\bf 1}_J(\g_1^{-1}\cdots \g_n^{-1} (x))\right]
=\E(\mathrm{card}\{n: \g_1^{-1}\cdots \g_n^{-1} (x)\in J\}).
$$
In particular, for $x\in C_J$   the sequence $ \g_1^{-1}\cdots \g_n^{-1} (x)$ visits $J$  finitely many times with probability 1, that is $N(\ug, x)<\infty$ $\mu^{\otimes \mathbb N}$--a.s.

Observe also that the set $C_J$ is $\Gamma^{-1}$--invariant. In fact, if $x\in C_J$ and $g_0\in \Gamma$, then
\begin{multline*}
\infty>\sum_{k=0}^{\infty}\widehat{P}^k{\bf 1}_J(x)\geq \sum_{k=1}^{\infty}\widehat{P}^k{\bf 1}_J(x)
= \sum_{g\in\Gamma}\sum_{k=0}^{\infty}\widehat{P}^k{\bf 1}_J(g^{-1}x)\mu(g)
\geq\sum_{k=0}^{\infty}\widehat{P}^k{\bf 1}_J(g_0^{-1}x)\mu(g_0).
\end{multline*}
Thus $g_0^{-1}x\in C_J$,  since $\mu(g_0)>0$. In particular, since (\UB) holds also for $\Gamma^{-1}$,  Lemma \ref{lem:unbound} entails that $C_J$ is unbounded.

\textbf{Case (\caseB)}. 	We are going to prove that under condition (\caseB) $\nu_1$ has no atoms and there exists a $\widehat{\mu}$-invariant Radon measure $\widehat{\nu}$ such that $\supp\nu_1\subseteq \supp\widehat{\nu}$.
Then the fact that $\nu_1$ is a multiple of $\nu_2$  follows from Proposition \ref{prop: case 1}.

We first prove that $\nu_1$ cannot have atoms. Let $K$ be a compact set that contains the recurrence interval and an accumulation point of $\supp\nu_1$. If $\nu_1$ had an atom in $K$ then, according to Lemma \ref{lem-nu-atom}, there would exist an $x_0\in K$ such that its $\Gamma^{-1}$--orbit
$M_0\,(= (\Gamma^{-1})^*x_0)$
has a finite number of points in $K$. But since $K$ contains an accumulation point of $\supp\nu_1$, there exists an  interval $J\subset M_0^c$ that contains at least two distinct points  $y_1$ and $y_2$ of $\supp\nu_1$. By $\Gamma^{-1}$--invariance of $M_0$ we deduce that for any $x\in M_0$,
$ g^{-1}x\notin J$ for $g\in \Gamma^*$. Thus  $M_0\subset C_J\not=\emptyset$, which leads to a contradiction.

	Since there exists a compact interval $J$ such that $C_J=\emptyset$, the Feller kernel $$\widehat{P}f(x):=\sum_{g\in\Gamma}f(g^{-1}(x))\mu(g)=\sum_{g\in\Gamma^{-1}}f(g(x))\widehat\mu(g)$$
	 is topologically conservative and therefore it has at least one invariant Radon measure $\widehat \nu$ (see Lin's Theorem \cite[Theorem 5.1]{Lin70}).
	 The set  $M_0:=\supp\widehat \nu$ is then closed and $\Gamma^{-1}$-invariant. Suppose now that there exists $y\in \supp \nu_1$ but  $y \not \in \supp\widehat{\nu}$. Since $\supp\widehat{\nu}$ is closed and $\nu_1$ has no atoms there exists then $J\subseteq\R\setminus \supp\widehat{\nu}$ that contains at least two distinct points  $y_1$ and $y_2$ of $\supp\nu_1$. By $\Gamma^{-1}$-invariance of $\supp\widehat{\nu}$ we conclude as above that for any $x\in \supp\widehat{\nu}$,
	 $ g^{-1}x\notin J$ for any  $g\in \Gamma^*$, that is  $\supp\widehat{\nu}\subset C_J\not=\emptyset$. Which leads to a contradiction.

	To  prove (2)  take
	   $\nu$ to be an ergodic invariant measure   and suppose  it does not contain  a $\Gamma$--invariant discrete set. Let $M\subseteq \supp\nu$ be a non empty closed $\Gamma$-invariant set. Then, by recurrence, there exists an ergodic invariant measure $\nu_1$ such that $\supp\nu_1\subseteq M$.  If $\supp\nu$ does not contain a discrete set, then we can apply the first part of the theorem obtaining that $\supp\nu_1=\supp\nu$. Hence $M=\supp\nu$. This proves that $\supp\nu$ is minimal.
	
Conversely, to prove (3), take a minimal closed $\Gamma$-invariant set $M$,  by recurrence. Then there exists an ergodic invariant measure $\nu_1$ such that $\supp\nu_1\subseteq M$. By minimality of $M$ we have $\supp\nu_1= M$. Take now another ergodic measure $\nu_2$ such that $\supp\nu_2=M=\supp\nu_1$. If $M$ is not discrete we can apply the first part of the theorem to conclude that $\nu_1$ and $\nu_2$ coincide up to a multiplicative constant. If $M$ is discrete  observe that any $x\in M$ is an atom for both $\nu_1$ and $\nu_2$, thus, invoking the Chacon-Ornstein theorem, we obtain
	that for any bounded function $\phi$  with compact support and for all $x\in M$ it holds
	$$\frac{\nu_1(\phi)}{\nu_1(\Phi)}=\lim_{n\to\infty}\frac{S_n\phi(x)}{S_n \Phi(x)} \qquad \mbox{ and } \qquad \frac{\nu_2(\phi)}{\nu_2(\Phi)}=\lim_{n\to\infty}\frac{S_n\phi(x)}{S_n \Phi(x)}.$$
From this we finally obtain that $\nu_1=C\nu_2$ with $C= \frac{\nu_1(\Phi)}{\nu_2(\Phi)}$. This completes the proof of~(3).
\end{proof}

\section{Uniqueness of an invariant measure: Proof of Theorem \ref{thm:main}} \label{sec: uniqueness}

%

We start with the following lemma:

\begin{lem}\label{lem:two invariant sets} Assume  that hypotheses (\Cont) and (\UB) hold. Then
any two nonempty and closed  $\Gamma$--invariant sets $M_1$ and $M_2$
have nonempty intersection, i.e. $M_1\cap M_2\neq \emptyset$.
\end{lem}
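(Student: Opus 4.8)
The plan is to argue by contradiction: assume $M_1\cap M_2=\emptyset$ and reach an impossibility from a ``too many alternations inside a bounded interval'' count. First I would record two elementary facts. Since $M_1$ and $M_2$ are nonempty, closed and $\Gamma$-invariant, hypothesis (\UB) together with Lemma~\ref{lem:unbound} shows that each of them is unbounded above and unbounded below. Moreover, letting $\overline{\I}$ be the closure of the bounded interval $\I$ supplied by (\Cont), the sets $M_1\cap\overline{\I}$ and $M_2\cap\overline{\I}$ are disjoint and compact; they are also nonempty, because applying (\Cont) to a single point $x\in M_i$ yields $g\in\Gamma^*$ with $g(x)\in\I$, and $g(x)\in M_i$ since $M_i$, being $\Gamma$-invariant, is $\Gamma^*$-invariant. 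Hence $\rho:=\dist\!\left(M_1\cap\overline{\I},\,M_2\cap\overline{\I}\right)>0$.

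The heart of the proof is to build, for every $k\in\N$, an alternating chain that is then forced by (\Cont) to sit inside $\I$. Using that both $M_1$ and $M_2$ are unbounded above, I would choose inductively real numbers $a_1<b_1<a_2<b_2<\cdots<a_k<b_k$ with $a_i\in M_1$ and $b_i\in M_2$ (take $a_1\in M_1$ arbitrary, then $b_1\in M_2$ with $b_1>a_1$, then $a_2\in M_1$ with $a_2>b_1$, and so on). Applying (\Cont) to the compact interval $[a_1,b_k]$ gives $g\in\Gamma^*$ with $g([a_1,b_k])\subseteq\I$. Since $g$ is an orientation-preserving homeomorphism we get $g(a_1)<g(b_1)<\cdots<g(a_k)<g(b_k)$, and by $\Gamma^*$-invariance $g(a_i)\in M_1\cap\overline{\I}$ and $g(b_i)\in M_2\cap\overline{\I}$. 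Thus the $2k-1$ consecutive gaps in this chain each have length at least $\rho$, so $\diam\overline{\I}\ge g(b_k)-g(a_1)\ge(2k-1)\rho$. Since this holds for all $k$ while $\rho>0$ and $\overline{\I}$ is bounded, we have a contradiction, and therefore $M_1\cap M_2\neq\emptyset$.

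The only step that really requires attention is the use of boundedness of the target interval $\I$ in (\Cont): this is what makes $\diam\overline{\I}<\infty$ and hence delivers the final contradiction. (It is genuinely needed: for $\Gamma$ generated by $x\mapsto x\pm1$ one has unbounded action and can push every compact set into $[0,\infty)$, yet $\Z$ and $\Z+1/2$ are disjoint closed $\Gamma$-invariant sets.) Everything else in the argument --- the inductive construction of the alternating chain, the passage from $\Gamma$-invariance to $\Gamma^*$-invariance, and order preservation under $g$ --- is routine.
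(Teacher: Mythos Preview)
Your proof is correct. Both your argument and the paper's rest on the same core insight --- contract an arbitrarily long alternation of $M_1$- and $M_2$-points into the fixed bounded interval $\I$ and derive a contradiction from the fact that only finitely many such alternations can fit --- but the technical execution differs. The paper works with the family $\mathcal{J}$ of maximal ``gap intervals'' $(a,b)$ with $a\in M_1$, $b\in M_2$, $(a,b)\cap(M_1\cup M_2)=\emptyset$, shows by a limiting argument that only finitely many (say $N$) of them lie in $\I$, then contracts $N+1$ disjoint gap intervals into $\I$ and applies the pigeonhole principle to force two images to overlap. You instead extract a single scalar $\rho=\dist(M_1\cap\overline{\I},\,M_2\cap\overline{\I})>0$ and observe that a contracted chain of $2k$ alternating points forces $\diam\overline{\I}\ge(2k-1)\rho$. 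Your version is somewhat more direct and quantitative; the paper's pigeonhole argument, on the other hand, never needs to invoke the metric at all (it is purely order-theoretic once one knows there are finitely many gap intervals in $\I$). Your remark that boundedness of $\I$ is the crucial point, with the $x\mapsto x\pm 1$ counterexample, is apt and is exactly what both proofs exploit.
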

\begin{proof}
Assume in contrary that $M_1\cap M_2=\emptyset$ for some $\Gamma$--invariant sets $M_1$ and $M_2$. Consider the class of all open intervals such that
	$$\mathcal{J}=\{J=(a,b): a\in M_1, b\in M_2\mbox{ and } (a,b)\subset (M_1\cup M_2)^c\}.$$
	Observe that all the intervals belonging to  $\mathcal{J}$ are disjoint. Furthermore, note  that if the sets  $M_1$ and $M_2$ are disjoint, then  for all pairs
$m_1\in M_1$, $m_2\in M_2$ such that $m_1<m_2$ there exists $J=(a,b)\in \mathcal{J}$ such that $J\subset (m_1,m_2)$. Indeed,  one can just take $a=\sup\{m\in M_1| \,m\leq m_2\}$ and $b= \inf\{m\in M_2|\, m\geq a\}$.
	
	Let $\I$ be the compact interval that appears  in (\Cont). We claim that there are only  finitely many intervals $J\in \mathcal{J}$ which are subsets of $\I$. Indeed, suppose that there are infinitely many elements $J_i=(a_i, b_i)$ of $\mathcal{J}$ such that $J_i\subset \I$ for $i\in \N$. Since both sequences $\{a_i\}_{i\in \N}$ and $\{b_i\}_{i\in \N}$ are contained in the compact interval $\I$, there exist a subsequence $\{i_k\}_{k\in\N}$ of $\N$ such that sequences $\{a_{i_k}\}_{k\in\N}$ and $\{b_{i_k}\}_{k\in\N}$ are convergent. We denote by $a_0$ and $b_0$ their corresponding limits. Recalling that both sets $M_i$ are closed, we deduce that $a_0\in M_1$ and $b_0\in M_2$.    On the other hand,  since all the intervals $J_{i_k}$ are disjoint and contained in a compact set $\I$ their diameters $|b_{i_k}-a_{i_k}|$ converge to zero. Thus $a_0=b_0\in M_1\cap M_2$ and we obtain a contradiction.
	
Denote by	$J_1,\ldots J_N$  all the disjoint intervals, elements of $\mathcal{J}$, contained in $\I$.
In view of Lemma \ref{lem:unbound}, since the  sets $M_1$ and $M_2$ are $\Gamma$--invariant, they are unbounded. Thus there exists an additional interval $J_{N+1}\in\mathcal{J}$ disjoint with $\I$ and all the remaining chosen intervals $J_i$ for $i\leq N$. Condition (\Cont) entails the existence of $g\in \Gamma^*$ such that  $g(J_1\cup\cdots\cup J_N\cup J_{N+1})\subset \I$. Since $g$ is a homeomorphism preserving the order,  for every $i\leq N+1$ it maps intervals  $J_i=(a_i,b_i)$ onto open intervals $g(J_i)=(g(a_i),g(b_i))$. Observe also that $g(a_i)\in M_1\cap \I$ and $g(b_i)\in M_2\cap \I$, thus for every $i\in\{1,\ldots ,N+1\} $ there exists $j_i\in  \{1,\ldots ,N\} $ such that $g(J_i)\supseteq J_{j_i}$ and then the pigeonhole principle entails that $j_{i_1}=j_{i_2}$ for some $i_1\not=i_2$. This means that both
$g(J_{i_1})$ and $g(J_{i_2})$ contain $J_{j_1}$ and therefore cannot  be disjoint. Moreover, $J_{i_1}\cap J_{i_2}\supset g^{-1}(J_{j_1})\not=\emptyset$ contradicting thus to the choice of the intervals  $J_{i_1}$ and $J_{i_2}$ as disjoint sets. So, we finally arrive at the conclusion that two closed and $\Gamma$--invariant  sets $M_1$ and $M_2$ must have a nonempty intersection. This completes the proof.
\end{proof}

\begin{proof}[Proof of Theorem \ref{thm:main}]
Suppose that there exist   two different invariant Radon measures. Without loss of generality, using ergodic decomposition, we may assume that there exist two different ergodic Radon measures $\tilde\nu_1$ and $\tilde\nu_2$.
We claim then that  there are two different invariant ergodic Radon measures $\nu_1$ and $\nu_2$ such that $\supp\nu_1\subseteq\supp\nu_2$.

If $\supp\tilde\nu_1=\supp\tilde\nu_2$ the result holds taking $\nu_1=\tilde\nu_1$ and $\nu_2=\tilde\nu_2$.

Consider now the second case when $\supp\tilde\nu_1\not=\supp\tilde\nu_2$. Both sets $\supp\tilde\nu_i$ are $\Gamma$--invariant, therefore in view of Lemma \ref{lem:two invariant sets} they must have nonempty intersection, i.e. $K=\supp\tilde\nu_1\cap \supp\tilde\nu_2\neq\emptyset$.
 Since $K$ is $\Gamma$--invariant, by (\Rec) there exists an invariant ergodic Radon measure, say $\nu_1$, whose support is contained in  $K$. Keeping in mind that both sets  $\supp\tilde\nu_1$ and $\supp \tilde\nu_2$ are different, at least one of them, say $\supp \tilde\nu_2$, must be greater than $K$. Then the couple  $\nu_1$ and  $\nu_2:=\tilde\nu_2$ satisfies the claim.

Observe that conditions (\Cont) and (\UB) imply that $M_1:=\supp\nu_1$ is not discrete. Indeed, if $\I$ is the interval appearing in (\Cont), then for all compact intervals $J$
$$\card(M_1\cap J)=\card(g(M_1\cap J))\leq\card(M_1\cap \I),$$
where $g\in \Gamma$ is such that $g(J)\subseteq \I$, by the fact that $M_1$ is $\Gamma$--invariant. Further, since $M_1$ is unbounded one can choose a sequence of compact intervals $J_n$ such that $\card(M_1\cap J_n)\to \infty$. Hence $\card(M_1\cap \I)=\infty$.

Point (1) of Theorem \ref{thm:main2} yields $\nu_1=C\nu_2$, which leads to a contradiction. The proof is complete. \end{proof}

\section{Examples and applications}\label{sect: exp}
We will provide in this section some criteria to ensure recurrence and contraction of the system. In particular, we will focus on the study of systems induced by homeomorphisms for which we can control their behaviour at the end points, such as asymptotically linear homeomorphisms and $C^2$-diffeomorphisms of the interval.

\subsection{Asymptotically linear systems}
In this section we will focus  on the study of systems induced by homeomorphisms that have a linear bound in the sense that for all $g\in \supp \mu$  there exist three positive numbers  $\Aplus (g), \Aminus (g)$ and $B(g)$ such that
\begin{equation}\label{eq: lin bound}
-\Aminus (g)x^--B(g)\leq g(x)\le \Aplus (g)x^++B(g),\qquad\mbox{for all $x\in\mathbb R$,}\end{equation}
where $x^+=\max\{0,x\}$ and $x^-=\max\{0,-x\}$.
\begin{center}
	\includegraphics[width=0.4\linewidth]{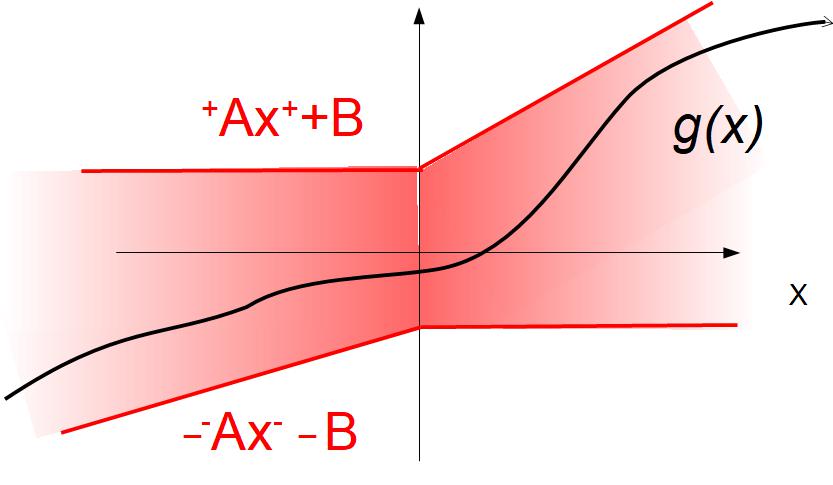}
\end{center}
It can be easily shown that  $g\in\Homr$ satisfies (\ref{eq: lin bound}) iff
the limits
$$\Aplus (g):=\limsup_{x\to +\infty} \frac{g(x)}{x}, \qquad \Aminus (g):=\limsup_{x\to -\infty} \frac{g(x)}{x} \quad \mbox{are finite}$$ and
$$\limsup_{x\to +\infty}[g(x)-\Aplus (g)x]<\infty  \quad\mbox{ and   }\quad \liminf_{x\to -\infty}[g(x)-\Aminus (g)x]>-\infty.$$
This kind of processes appears in many contexts of  probability and related fields and have been investigated in several paper in the last years, see e.g. \cite{Alsmeyer, Alsmeyer:Brofferio:Buraczewski,Brofferio:Buraczewski,Diaconis:Freedman,Goldie}.  A fundamental example that has been widely studied is the  affine recursion where $g(x)=A(g)x+B(g)$ (see \cite{buraczewski2016stochastic} for a general overview).  We refer to \cite[Sect. 6]{Brofferio:Buraczewski} for more detailed presentation of possible applications. In particular, condition (\ref{eq: lin bound}) holds, after conjugation, for  any increasing $C^2$-diffeomorphism $h$ of the interval $[0,1]$, as we will see in the next section.

One expects that if $\Aplus $ and $\Aminus $ are sufficiently often smaller than $1$, then the system will be often repelled  away from infinity and thus will be recurrent or contracting.
For instance one can prove the following sufficient criteria for hypothesis (\Cont) and (\Rec):

\begin{lem}\label{lem:condAL}
\begin{enumerate}
\item 	Suppose that there exists  $g\in \supp\mu $ such that (\ref{eq: lin bound}) holds  with $\Aplus (g)$ and $\Aminus (g)$  smaller than 1. Then (\Cont) holds.
		\item Suppose that (\ref{eq: lin bound}) holds  for every $g\in \supp\mu$. Then (\Rec) holds in any of the following case:
		\begin{enumerate}
			\item $\log\ps{\pm}A(g)$ and $\log^+ B(g)$  are $\mu$-integrable and
			$\int \log\ps{\pm}A(g) \d\mu(g) <0$;
			\item the support of $\mu$ is finite and $\int \log\ps{\pm}A(g  )\d\mu(g) \leq 0$;
			\item $\Aminus=\Aplus=A$,  $\log\ps{\pm}A(g)$ and $\log^+ B(g)$  are $2+\varepsilon$-integrable, for some $\varepsilon>0$, and
			$\int \log A(g) \d\mu(g)=0$.
		\end{enumerate}
	\end{enumerate}

\end{lem}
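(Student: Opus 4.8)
The plan is to treat the two items separately, since part (1) is a direct deterministic computation and part (2) reduces to known recurrence criteria for random walks with linear bounds. For part (1), I would fix a $g\in\supp\mu$ with $\Aplus(g)=:A^+<1$ and $\Aminus(g)=:A^-<1$ and set $A:=\max\{A^+,A^-\}<1$ and $B:=B(g)$. From \eqref{eq: lin bound} one gets immediately $|g(x)|\le A|x|+B$ for all $x\in\R$, and hence by iteration $|g^k(x)|\le A^k|x|+B/(1-A)$. Given a compact $K\subset\R$ with $\sup_{x\in K}|x|=R$, choosing $k$ large enough that $A^kR<1$ yields $g^k(K)\subset(-1-B/(1-A),\,1+B/(1-A))=:\I$, and $g^k$ lies in the semigroup generated by $\Gamma$; this is exactly condition (\Cont). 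One should note the interval $\I$ does not depend on $K$, only $k$ does, which is what the statement of (\Cont) requires.

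For part (2), the idea is that under \eqref{eq: lin bound} the quantity $\log^+|X_n^x|$ (or a comparable Lyapunov function) behaves like a perturbed random walk whose increments have the sign of $\log\ps{\pm}A(g)$, so the three sub-cases (a), (b), (c) are precisely the classical recurrence regimes: negative drift with integrable increments, zero drift with finite support, and zero drift with higher-moment control à la Babillot--Bougerol--Elie. Concretely I would split the trajectory according to whether $X_n^x\ge 0$ or $X_n^x<0$: on the positive side $\log^+ X_{n}^x \le \log(\Aplus(g_n)X_{n-1}^{x,+}+B(g_n))$, which for $X_{n-1}^x$ large is $\le \log\Aplus(g_n)+\log X_{n-1}^x + o(1)$; symmetrically on the negative side with $\Aminus$. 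Thus outside a compact set the process is dominated by a random walk with increments $\log\ps{\pm}A(g)$, and:
\begin{itemize}\end{itemize}
in case (a), Chung--Fuchs / law of large numbers gives $\log^+|X_n^x|\to -\infty$ along a subsequence forcing infinitely many returns to a fixed compact interval;
in case (b), finiteness of the support makes the increments bounded, so a zero-mean bounded random walk is recurrent and the perturbation (bounded overshoot from the additive term $B$) does not destroy this;
in case (c), with $\Aplus=\Aminus=A$ and $2+\varepsilon$ moments one invokes the central-limit-type fluctuation estimates used for centred affine recursions, which guarantee that the walk returns to a neighbourhood of the origin infinitely often a.s.

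I expect the main obstacle to be item (2)(c): making rigorous that the higher-moment hypothesis on $\log A$ and $\log^+ B$ transfers the recurrence of the centred one-dimensional random walk $\sum_{k\le n}\log A(g_k)$ to the nonlinear, only-linearly-bounded process $X_n^x$, controlling in particular the additive noise $B(g_n)$ and the passage of the trajectory between the positive and negative half-lines (where the relevant multiplier switches between $\Aplus$ and $\Aminus$). For cases (a) and (b) a soft Lyapunov-function argument suffices, but for (c) one genuinely needs the quantitative fluctuation bounds; the cleanest route is to cite the corresponding results from \cite{Babillot:Bougerol:Elie} and \cite{Brofferio:Buraczewski} for the linearly bounded setting and then verify that "visits a fixed compact interval infinitely often from every starting point" is exactly what those give, upgrading, if necessary, an almost-sure statement for one fixed $x$ to all $x\in\R$ by monotonicity of the homeomorphisms (so that a trajectory started at $x$ is squeezed between trajectories started at the endpoints of any interval containing $x$).
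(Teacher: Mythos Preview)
Your treatment of part (1) is correct and essentially identical to the paper's: iterate the bound $|g(x)|\le A|x|+B$ with $A=\max\{\Aplus(g),\Aminus(g)\}<1$ to get $|g^n(x)|\le A^n|x|+B/(1-A)$, then absorb any compact set into a fixed interval.

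For part (2), however, the paper takes a different and cleaner route that avoids exactly the sign-switching difficulty you flag. Instead of tracking $\log^+|X_n^x|$ and splitting according to the sign of $X_{n-1}^x$, the paper introduces two genuine affine recursions
\[
\Yplus_n=\Aplus(\g_n)\,\Yplus_{n-1}+B(\g_n),\qquad \Yminus_n=\Aminus(\g_n)\,\Yminus_{n-1}+B(\g_n),
\]
started from $x^+$ and $x^-$ respectively, and proves the deterministic sandwich $-\Yminus_n\le X_n^x\le \Yplus_n$ for all $n$. This reduces recurrence of $X_n^x$ to recurrence of the pair $(\Yplus_n,\Yminus_n)$, with no reference to the sign of $X_n^x$ at intermediate times. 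Your approach of bounding $\log^+X_n^x$ by a random walk whose increment depends on the current sign does not obviously close, because the effective multiplier is a random mixture of $\Aplus$ and $\Aminus$ and one cannot replace it by $\max\{\Aplus,\Aminus\}$ without losing the drift hypotheses.

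Your diagnosis of which sub-case is hardest is also inverted. In the paper's scheme, (a) and (c) are dispatched together by citing known joint recurrence of the two-dimensional affine process $(\Yplus_n,\Yminus_n)$ (for (c) the hypothesis $\Aplus=\Aminus$ means both coordinates share the same multiplicative noise, so the results of \cite{Babillot:Bougerol:Elie} apply). It is case (b) that requires a genuine extra argument: with mean $\le 0$ each one-dimensional recursion $\Yplus_n$, $\Yminus_n$ is recurrent, but the joint process need not be, so one only knows $X_n^x<K$ i.o.\ and $X_n^x>-K$ i.o.\ separately. The paper then uses a crossing argument: if $X_n^x\le -K$ infinitely often it must cross $-K$ infinitely often, and at each crossing time the overshoot is bounded by $\max_{g\in\Gamma}g(-K)$, which is finite precisely because $\Gamma$ is finite. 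Your sketch (``bounded overshoot from the additive term $B$'') gestures at this but does not isolate the point that \emph{joint} recurrence fails and must be recovered by the crossing trick.
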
	
\begin{proof}
We are going to use the linear bound assumed in (\ref{eq: lin bound}) to compare the Markov chain $X_n^x=\ell_n(x)=\g_n\cdots \g_1(x)$ with the affine recursions:
\begin{align*}
\Yplus_n^x:&=\Aplus_n \Yplus_{n-1}+B_n,\quad \Yplus_0=x^+,\\
\Yminus_n^x:&=\Aminus_n \Yminus_{n-1}+B_n,\quad \Yminus_0=x^-,
\end{align*}
where $\Aplus_n=\Aplus(\g_n), \Aminus_n=\Aminus(\g_n)$ and $B_n=B(\g_n).$
It can then be verified by the inductive argument that
\begin{equation}\label{eq:ALvsAFF}
-\Yminus_n^x \leq \g_n\cdots \g_1(x)\leq \Yplus_n^x
\end{equation}

\textbf{Proof of (1)} Let $g\in \supp\mu$ be such that $$A:=\max \{\Aplus (g),\Aminus (g)\}<1$$ and set $B:=B(g)$.
It can  be verified by induction (or applying (\ref{eq:ALvsAFF}) when $\g_i=g$ for all $i$'s) that
\begin{equation}
-A^n x^- -\sum_{k=0}^{n-1} A^k B \leq g^n(x)\leq A^n x^+ +\sum_{k=0}^{n-1} A^k B.
\end{equation}
In particular, if $\beta:=\sum_{k=0}^{\infty} $$A^k B$  then $|g^n(x)|\leq A^n|x|+\beta$.
Fix $I:=[-2\beta,2\beta]$ and take any interval $J=[a,b]$. Then for any sufficiently large $n$ we obtain
$$
g^n(J)\subseteq[-A^n|a|-\beta,A^n|b|+\beta]\subseteq I.
$$

\textbf{Proof of (2): (a) and (c).} It is known that under hypotheses (a) or (c) the two dimensional Markov Chain $\{(\Yplus_n,\Yminus_n)\}_{n\in\N}$ is recurrent in $\R^2$, that is, there exists a constant $K>0$ such that for any starting point, with probability 1,
$\max\{|\Yplus_n|,|\Yminus_n|\}<K$ for infinitely many $n$ (see \cite{Babillot:Bougerol:Elie} and Section 4.4.10 in \cite{buraczewski2016stochastic}). From (\ref{eq:ALvsAFF}) it follows that $X_n^x$ visits infinitely often the interval $I=[-K,K]$.

\textbf{Proof of (2): (b).} Under hypothesis (b) one needs to be more careful. In fact, in this case  each of the one dimensional affine recursions $\Yplus_n$ and $\Yminus_n$ is recurrent, but the joint process $(\Yplus_n,\Yminus_n)$ may not.

Let $K>0$ be such that for all $x\in\R$ we have
$$
\P(|\Yplus_n^x|< K \mbox{ i.o.})=1\mbox{ and }\P(|\Yminus_n^x|< K \mbox{ i.o.})=1.
$$
In view of (\ref{eq:ALvsAFF}) this yields that for any $x\in \R$ the set
$$\Omega_x:=\{\ug \in \Gamma^\N|\,\, X_n^x=\g_n\cdots \g_1(x)< K \mbox{ i.o. and } X_n^x=\g_n\cdots \g_1(x)> - K \mbox{ i.o.} \}$$
has full measure.

Observe that since $\Gamma$ is finite,   $K_0:=K \vee \max_{g\in \Gamma} g(-K)<\infty$.
We are going to prove that for any $x\in \R$ and $\ug\in \Omega_x$  the event $[|X_n^x| < K_0]$ occurs, with probability $1$, for infinitely many $n$.

Take  $\ug\in \Omega_x$. Then there are two possible situations :\\
\textbf{ Case 1:   $\g_n\cdots \g_1(x)=X_n^x\leq -K$ infinitely often.} Thus since we also have $X_n^x>-K$ i.o., $X_n^x$ has to cross $-K$ i.o., that is the sequence of stopping times:
$$T_n=T_n(\ug):=\inf\{k>T_{n-1}| X_{k-1}\leq -K \mbox{ and } X_k>-K \}\qquad T_0:=0$$
is almost surely finite for any $n$. Furthermore
$$-K_0\leq -K \leq X_{T_n}^x=\g_{T_n}(X_{T_{n}-1})\leq \g_{T_n}(-K)\leq \max_{g\in \Gamma} g(-K)<K_0$$
thus $|X_n^x|\leq K_0$  infinitely often.\\
\textbf{ Case 2:   $\g_n\cdots \g_1(x)=X_n^x<-K$ only a finite number of times.} Thus $X_n^x>-K$ for all $n\geq N(\ug)$. Then, since $X_n^x<K$ i.o., we have $|X_n^x|< K\leq K_0$ i.o. This completes the proof. \end{proof}

\subsection{$C^2$-diffeomorphisms of the interval}
Our main theorems and the above mentioned results concerning asymptotically linear systems  can be applied to Stochastic  Dynamical Systems on the interval generated by increasing $C^2$-diffeomorphism  of  $[0,1]$.
 Similar iterated function systems have been extensively studied recently (see \cite{alseda_misiurewicz, Czudek_Szarek, Gharaei_Homburg, malicet}).  A sufficient criterion for the uniqueness of an invariant measure in this situation has been stated in Corollary \ref{cor:C2diff} and is a direct consequence of Theorem \ref{thm:main}, Lemma \ref{lem:condAL} and of the following:
 \begin{lem}\label{lem:C2vsAL}
 	Take the diffeomorphism of $(0,1)$ onto  $\R$ defined by $r(u) :=  - \frac 1u + \frac 1{1-u}.$ Then for any increasing $C^2$-diffeomorphism $h$ of  $[0,1]$,
 	 the conjugated homeomorphism  $$ h_r: =  r\circ h \circ r^{-1}\in\Homr  $$ satisfies (\ref{eq: lin bound}) with
 	$$\Aplus (h_r)=\frac{1}{h'(1)} \quad\mbox{ and   }\quad \Aminus (h_r)=\frac{1}{h'(0)}.$$
 Furthermore, if $\mu$ is  a finitely  supported measure on the family of increasing diffeomorphisms in $C^2([0,1])$ and $\mu_r$ is the conjugated measure on $\Hom^+(\R)$, a Radon measure $\nu$ on $(0,1)$ is $\mu$--invariant iff the Radon measure on $\R$ of the form $$\nu_r(f)=r*\nu(f)=\int_{[0,1]} f(r(x))d\nu(x)
 	$$
 	is $\mu_r$--invariant.
 \end{lem}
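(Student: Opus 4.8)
The plan is to prove the two assertions separately; each is essentially a change of variables, and the only computational input is the behaviour of $r$ near the endpoints of $(0,1)$.

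For the first assertion, recall that an increasing $C^2$-diffeomorphism $h$ of $[0,1]$ fixes the endpoints, $h(0)=0$ and $h(1)=1$, and has $h'>0$ on $[0,1]$; Taylor's formula then gives $h(u)=h'(0)\,u+O(u^2)$ as $u\to 0^+$ and $1-h(u)=h'(1)\,(1-u)+O\bigl((1-u)^2\bigr)$ as $u\to 1^-$. Since $r(u)\to+\infty$ as $u\to 1^-$ and $r(u)\to-\infty$ as $u\to 0^+$, the conjugate $h_r=r\circ h\circ r^{-1}$ is an increasing self-homeomorphism of $\R$ (so $h_r\in\Homr$), and $x\to+\infty$ (resp. $x\to-\infty$) corresponds to $u:=r^{-1}(x)\to 1^-$ (resp. $u\to 0^+$). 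I would then substitute $x=\tfrac1{1-u}-\tfrac1u$ and $h_r(x)=r(h(u))=\tfrac1{1-h(u)}-\tfrac1{h(u)}$ and expand. Near $u=1$ one has $\tfrac1{1-u}=x+O(1)$, while $1-h(u)=h'(1)(1-u)\bigl(1+O(1-u)\bigr)$ gives $\tfrac1{1-h(u)}=\tfrac1{h'(1)(1-u)}+O(1)=\tfrac{x}{h'(1)}+O(1)$ and $\tfrac1{h(u)}=O(1)$, whence $h_r(x)=\tfrac{x}{h'(1)}+O(1)$ as $x\to+\infty$; the symmetric computation near $u=0$ gives $h_r(x)=\tfrac{x}{h'(0)}+O(1)$ as $x\to-\infty$. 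In particular all four quantities in the characterisation of \eqref{eq: lin bound} are finite: $\Aplus(h_r)=\limsup_{x\to+\infty}h_r(x)/x=1/h'(1)$, $\Aminus(h_r)=\limsup_{x\to-\infty}h_r(x)/x=1/h'(0)$, and the remainders $h_r(x)-x/h'(1)$, $h_r(x)-x/h'(0)$ stay bounded at $+\infty$ and $-\infty$ respectively. This establishes the first assertion.

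For the second assertion I would first note that conjugation by $r$ identifies $C_C((0,1))$ with $C_C(\R)$: if $g\in C_C(\R)$ then $f:=g\circ r\in C_C((0,1))$, since $r^{-1}(\supp g)$ is a compact subset of the open interval $(0,1)$, and $f\mapsto f\circ r^{-1}$ inverts this; the same remark applied to indicators of compact sets shows that $\nu_r$ is a Radon measure on $\R$ iff $\nu$ is a Radon measure on $(0,1)$, and $\mu_r$ is again finitely supported because conjugation is injective. Now, for $g\in C_C(\R)$ and $f=g\circ r$, the defining relation $\int_\R g\,d\nu_r=\int_{(0,1)}g(r(u))\,d\nu(u)$ reads $\int_\R g\,d\nu_r=\int_{(0,1)}f\,d\nu$, while, using $h_r\circ r=r\circ h$ on $(0,1)$ and $\mu_r(h_r)=\mu(h)$, one gets $\sum_{h}\mu(h)\int_\R g(h_r(x))\,d\nu_r(x)=\sum_h\mu(h)\int_{(0,1)}g\bigl(r(h(u))\bigr)\,d\nu(u)=\sum_h\mu(h)\int_{(0,1)}f(h(u))\,d\nu(u)$. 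Comparing the two sides, $\nu_r$ satisfies $\int g\,d\nu_r=\sum_h\mu(h)\int g(h_r)\,d\nu_r$ for all $g\in C_C(\R)$ if and only if $\nu$ satisfies $\int f\,d\nu=\sum_h\mu(h)\int f(h)\,d\nu$ for all $f\in C_C((0,1))$, i.e. $\nu_r$ is $\mu_r$-invariant iff $\nu$ is $\mu$-invariant.

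The only delicate point is the asymptotic expansion of the first part: one must keep track of the additive errors and observe that it is precisely the $O\bigl((1-u)^2\bigr)$, resp. $O(u^2)$, control of $h$ near the endpoints --- that is, the $C^2$ hypothesis (a Lipschitz derivative near the endpoints would already suffice) --- which upgrades ``$h_r(x)/x\to 1/h'(1)$'' to the genuine linear bound \eqref{eq: lin bound} with a \emph{bounded} additive term; with only $C^1$ regularity one would merely obtain $h_r(x)=\tfrac{x}{h'(1)}\bigl(1+o(1)\bigr)+O(1)$, which need not remain within bounded distance of an affine function.
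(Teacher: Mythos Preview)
Your proof is correct and follows essentially the same approach as the paper's: both arguments substitute $u=r^{-1}(x)$, use $r(u)\sim\tfrac{1}{1-u}$ near $u=1^-$, and combine this with the second-order Taylor expansion of $h$ at the endpoints to extract the slope $1/h'(1)$ and the bounded remainder. The only cosmetic difference is that you obtain $h_r(x)=x/h'(1)+O(1)$ in one stroke, whereas the paper separates the computation of the limit $\limsup h_r(x)/x$ from that of $\limsup[h_r(x)-x/h'(1)]$; your treatment of the invariance equivalence is also more explicit than the paper's, which simply declares it obvious.
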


 \begin{proof}
We have
 \begin{align*}
 \Aplus (h_r) &=\limsup_{x\to +\infty} \frac{h_r(x)}{x}
 =\limsup_{x\to +\infty} \frac{r(h(r^{-1}(x))}{r(r^{-1}(x))}\\
 &=\limsup_{u\to 1^-} \frac{r(h(u))}{r(u)}\quad\mbox{ change of variable }u:=r^{-1}(x)\\
 &=\limsup_{u\to 1^-} \frac{\frac {1}{1-h(u)}}{\frac 1{1-u}}\quad\mbox{ since  $r(u)\sim \frac 1{1-u}$ for $u\sim 1^-$}\\
 &=\limsup_{u\to 1^-} \frac{1-u}{h(1)-h(u)}=\frac{1}{h'(1)}\quad\mbox{ since  $h(1)=1$.}
 \end{align*}
 Furthermore,  since $h$ is $C^2(0,1)$, we have
 $h(u)=1+h'(1)(1-u)+\mathcal{O}((1-u)^2)$. Thus finally
 \begin{align*}
 \limsup_{x\to+ \infty } \left[h_r(x)-\frac{x}{h'(1)}\right]&=\limsup_{u\to 1^-}\left[\frac{1}{1-h(u)}-\frac{1}{h'(1)(1-u)}\right]\\&=\limsup_{u\to 1^-}\frac{\mathcal{O}((1-u)^2)}{h'(1)^2(1-u)^2}<\infty.
 \end{align*}
 Similar calculations can be done near $-\infty$ and $0$.

 The second part of the lemma is obvious.
 \end{proof}

\subsection{Counterexamples to Theorems \ref{thm:main} and \ref{thm:main2} }
In this section we intend to provide some examples of Stochastic Dynamical Systems that have more than one invariant measure to explain that neither condition (\Cont) nor (\UB) is sufficient alone to guarantee  uniqueness.

\subsubsection{Contraction but not unboundedness}\label{ssect:CnotUB}
Consider the Stochastic Dynamical Systems generated by a set $\Gamma$ of homeomorphisms that fix two distinct points $a$ and $b$ of $\R$ and are all repulsive at the end point. For instance take $\mu$ that gives mass 1/2 to $h(x)=x^{1/3}$ and to  $k(x)=x^{1/5}$.
Then $\mu$ is contracting because $h^n[-K,K]=[-K^{1/3^n},K^{1/3^n}]$ is in $[-2,2]$ for any large $n$. The interval $[-2,2]$  is also recurrent for similar reasons.  This system does not have a unique invariant measure since $\delta_0$ and $\delta_1$, the Dirac measures in $0$ and $1$, are both $\mu$--invariant.

\subsubsection{Unboundedness but not contraction }\label{ssect:UBnotC}
An example of a recurrent Stochastic Dynamical System  that satisfies (\UB) but not (\Cont) is just given by the simple random walk on $\Z\subset\R$. In fact, take $\mu$ that gives mass 1/3 to $h_0(x)=x$, $h_+(x)=x+1$ and  $h_-(x)=x-1\in \Homr$. It defines a recurrent Markov chain and it is obviously unbounded. It possesses infinitely many Radon ergodic invariant measures given by the counting measures on $\Z+k\subseteq \R$ for any $k\in[0,1)$. The  Lebesgue measure on $\R$ is also invariant but it is not ergodic.

\subsubsection{Ergodic measures with non minimal support}\label{ssect:ErgNonMin}
We propose here an example to prove that an ergodic measure may have support that is not minimal. The idea is to start with a Stochastic Dynamical System generated by a measure  $\overline\mu$ on the set of  increasing $C^2$-diffeomorphisms  of  $[0,1]$  that has a unique Radon measure $\overline{\nu}$   whose support is the whole interval $(0,1)$. It follows then that  $\overline{\nu}$ is ergodic. Let $\overline\Gamma=\supp\overline\mu$. For any $\overline g \in \overline\Gamma$ define three homeomorphisms of $\R$:
$$
g_0(x):=\overline{g}(\{x\})+\lfloor x\rfloor,\quad
g_+(x):=\overline{g}(\{x\})+\lfloor x\rfloor +1, \quad
g_-(x):=\overline{g}(\{x\})+\lfloor x\rfloor -1,
$$
where $\{x\}$ is the fractional part of $x$  and $\lfloor x\rfloor$ the floor function. Heuristically the function $g_0$ fixes each integer interval $[n,n+1]$ and acts on each one of them as $\overline{g}$, while $g_\pm$ do the same but are then composed with a translation by $\pm 1$.
Let $\mu$ be the measure charging $g_0,g_\pm$ with mass equal to $\overline{\mu}(\overline{g})/3$. Then it can be proved that the measure
$$\nu(f):=\sum_{k=-\infty}^{+\infty}\int_0^1f(y+k)d\overline{\nu}(y)$$
is a $\mu$-invariant Radon ergodic measure whose support is the whole $\R$. On the other hand, $\Z\subset\R$ is a discrete closed invariant set for $\mu$ (and the counting measure on $\Z$ is another ergodic measure).

\subsubsection{Non recurrent system} A classical example that shows that for non recurrent systems a closed minimal $\Gamma$--invariant set can be a support of several invariant measures is a non centred random walk on $\Z$. Suppose that $g(x)=x+B(g)$ with $B(g)\in\Z$.   Further, suppose also that  $\E(B(\g_1))\not= 0$ and that there exists $\alpha\not=0$ such that $\E(e^{-\alpha B(\g_1)})=1$. Then both the counting measure on $\Z$ and  the measure on $\Z$ such that
$\nu(x)=e^{\alpha x}$ for any  $x\in\Z$ are invariant.

\subsubsection{Non-Radon invariant measures} The restriction to Radon measures in Theorem \ref{thm:main} is indispensable. In the family of  Borel measures the uniqueness
of the invariant measure can be easily broken, see e.g. Remark 2 in \cite{Babillot:Bougerol:Elie}.

\section{Appendix: Some results on ergodic invariant measures  for Markov--Feller processes  }\label{sec:ergodic}
This part of our paper is
devoted to the description of ergodic measures and to the proof of an ergodic decomposition for Markov--Feller processes on locally compact metric spaces \eqref{e1_20.02.13}.
Some of the results of this section seem to be classical and have been often used in a different context in several works in this fields. They are based on the classical theory of positive contractions of $L^1$-spaces that is a powerful and general tool. However we could not find a comprehensive reference specifically adapted to study of Markov--Feller processes with an invariant Radon measure.  So we  give a quick survey of the results that we need in our paper and explain how they can be deduced from the general theory.
In particular,  we give an explicit proof of the ergodic decomposition  of a general invariant Radon measure as an integral over the class of ergodic Radon measures.

For a complete overview on the ergodic theory and related infinite measures, $L^1$-contractions and Markov processes we refer to the books by S. R. Foguel \cite{Foguel}, A. M. Garsia~\cite{Garsia} and D. Revuz~\cite{Revuz}. For a glimpse to the theory we also suggest the nice informal survey of R. Zweim\"uller \cite{zweimueller}.

\subsection{ Markov--Feller processes  and $L^1$-contractions}
Let $(X, \rho)$ be a locally compact metric space and let $(X,  \mathcal B(X), \nu)$ be a $\sigma$--finite measure space with  Radon measure $\nu$. Let $P$ be an operator on $L^1(X,\nu)$ and $L^\infty(X,\nu)$ such that
\begin{itemize}
	\item $P$  is \textit{positive}, i.e. $Pf\ge 0$  for $f\ge 0$;
	\item $P{\bf 1}_X(x)={\bf 1}_X(x)$ for $\nu$-almost every $x\in X$;
	\item $P$ is a {\it contraction} of $L^1(X,\nu)$, i.e. the operator norm $\|P\|_1$ on this space is less than $1$.  This last condition is equivalent to the property of the measure $\nu$ called an \textit{excessive measure}, that is $$\int_X Pf(x)d\nu(x)\leq \int_X f(x)d\nu(x)\quad\mbox{for}\quad f\in L^1(X,\nu), f\ge 0.$$
\end{itemize}	
   We shall call the quadruple $(X, \mathcal B(X), \nu, P)$ a Markov process. This process is said to be \textit{Feller} if  additionally we assume that $Pf\in C(X)$
 for $f\in C_c(X)$. Here $C(X)$ denotes the space of continuous functions and  $C_c(X)$  the sub-space of continuous functions with compact support.
A Radon measure $\nu$
 will be called {\it invariant} for a given Markov--Feller process if $ \nu(Pf)= \nu(f)$ for $f\in L^1(X,\nu)$.

Using the duality between $L^1(X,\nu)$ and $L^\infty(X,\nu)$ we can define a dual operator $P^*$ on both  $L^1(X,\nu)$ and $L^\infty(X,\nu)$. More precisely, for any $f\in L^1(X,\nu)$ (respectively $f\in L^\infty(X,\nu)$) $P^*f$ is the unique function in $L^1(X,\nu)$ (respectively in $L^\infty(X,\nu)$) such that
\begin{equation*}
\int_X P^*f(x) g(x)d\nu(x) = \int_X f(x) Pg(x)d\nu(x)\quad\mbox{ for all $g\in L^\infty(X,\nu)$ (resp. $g\in L^1(X,\nu)$  )}
\end{equation*}
It can be easily checked that also $P^*$ is a positive contraction of $L^1(X,\nu)$.

We can associate to a Markov operator $P$ with an invariant measure $\nu$ the  \textit{space of the trajectories} of the associated Markov chain $(X_n)_{n\in\N}$, that is, the product space $X^\N$ equipped with the measure $\P_\nu$  such that for any finite collection of compact sets  $I_i\subset X $, $i=0,\ldots, n$, the measure of the cylinder $[I]=I_0\times\cdots \times I_n\times \R\times \cdots$ is given by the formula:
\begin{align*}
\P_\nu([I])&= \P_\nu(X_0\in I_0,\ldots, X_n\in I_n )\\&:=\int_{\R^{n+1}}  {\bf 1}_{I_n}(x_n)\cdots {\bf 1}_{I_0}(x_0)P(x_{n-1}, dx_{n}) \cdots P(x_1, dx_2)P(x_0, dx_1) \nu(dx_0).
\end{align*}
The shift $\tau$ on $X^\N$, that is the map $\ux=(x_0,x_1,\ldots)\mapsto \tau\ux=(x_1,x_2,\ldots)$, induces the operator on $L^1(X^\N,\P_\nu)$ (and on $L^\infty(X^\N,\P_\nu)$) that will also be denoted by $\tau$ and defined by the formula: $\tau f(\ux)=f(\tau\ux)$.
If $\nu$ is $P$-invariant, then $\P_\nu$ is $\tau$-invariant. Thus $\tau$ is a positive contraction of $L^1(X^\N,\P_\nu)$.

More generally,  let $(W,\omega)$ be a $\sigma$-finite measure space. A  linear operator $T$ of  $L^1(W,\omega)$ is a \textit{positive contraction} if it is positive ($Tf\geq 0$ whenever $f\geq 0$) and   $\|T\|_1\leq 1$.
We may define the adjoint operator $T^*: L^{\infty}( \omega)\to L^{\infty}( \omega)$ by the formula:
$$
\int_W T^*g f\d \nu=\int_X g Tf\d \nu\qquad \mbox{for $g\in L^{\infty}( \nu)$ and $f\in L^1( \nu)$.}
$$

As we have seen above, we can associate to a Markov kernel $P$ with a $P$-invariant measure $\nu$  at least three different contractions: the  contraction $P$ on $L^1(X,\nu)$, the contraction $P^*$ also on $L^1(X,\nu)$ and the shift $\tau$ on $L^1(X^\N,\P_\nu)$. All of them  possess interesting properties, but this abundance generates also some confusion. We will be mainly interested  in the contractions $P$ and $\tau$. Let us just notice that the contraction $P^*$  is particularly adapted and widely used to the study of Harris recurrent Markov chains (see for instance  Revuz \cite{Revuz} and Foguel \cite{Foguel}), but this is not our case.

These three contractions are deeply related and the dynamical systems they engender share often the same ergodic properties, as it will be shown below.  For some results in this direction see also the recent paper of F. P\`ene and D. Thomine \cite[Section 2]{Pene:Thomine}.

\subsection{Ergodic measures}\label{sec: appendix erg mes}
A fundamental property of $L^1$--contractions is ergodicity saying that the space cannot be decomposed into smaller invariant pieces. More precisely, Borel set $A$ is called {\it $T$--invariant} (or \textit{invariant} ) if $T^*{\bf 1}_A={\bf 1}_A$ (or equivalently if $\nu_A$, the restriction of $\nu$ to $A$, is a $T$--invariant measure). An invariant measure $\nu$ is called {\it ergodic} if either $\nu(A)=0$ or $ \nu(X\setminus A)=0$ for any $T$--invariant set $A\subset X$.

For the contractions induced by a Markov operator $P$, we have in principle at least three definitions of ergodicity (for $P$, $P^*$ and $\tau$) but all of them coincide. First of all, observe that the $\sigma$-algebras of  invariant sets defined by the contractions $P$ and $P^*$ on $L^1(x,\nu)$ coincide (see \cite[Chapter 4, Proposition 3.4]{Revuz}).  Thus $\nu$ is $P$--ergodic if and only if it is $P^*$--ergodic. Furthermore, we have the following:

\begin{lem}\label{lem: erg nu Pnu}
	Let $P$ be a Markov--Feller operator, and let $\nu$ be an invariant measure.
	If $\nu$ is $P$-ergodic, then $\P_\nu$ is ergodic for the shift $\tau$.
\end{lem}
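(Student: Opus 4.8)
The plan is to prove the contrapositive: if $\P_\nu$ is not ergodic for $\tau$, then $\nu$ is not $P$-ergodic. So suppose there is a $\tau$-invariant set $\mathcal{A}\subseteq X^\N$ with $0<\P_\nu(\mathcal{A}\cap C)<\P_\nu(C)$ for some cylinder $C$ of finite measure; equivalently, the indicator $\1_{\mathcal{A}}$ is a non-constant $\tau$-invariant element of $L^\infty(X^\N,\P_\nu)$. First I would pass from the invariant set to an invariant function of the single coordinate: since $\1_{\mathcal{A}}$ is $\tau$-invariant, $\1_{\mathcal{A}}=\1_{\mathcal{A}}\circ\tau^n$ for all $n$, so $\1_{\mathcal A}$ is measurable with respect to $\sigma(X_n,X_{n+1},\ldots)$ for every $n$, hence with respect to the tail $\sigma$-algebra. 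The key step is then to show that a tail-measurable function is (a.s.) a function of $X_0$ alone, i.e. equals $h(X_0)$ for some $h\in L^\infty(X,\nu)$, and that $h$ satisfies $P^*h=h$ (or $Ph=h$); since $\1_{\mathcal A}$ is not constant, $h$ is not $\nu$-a.e. constant, so $A:=\{h>c\}$ for a suitable level $c$ is a non-trivial $P^*$-invariant set, contradicting $P$-ergodicity (using that $P$- and $P^*$-ergodicity coincide, as recalled in the excerpt).

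**Concretely**, here is the route I would take for the central identification. Write $\mathbb{E}_\nu$ for expectation under $\P_\nu$ and use the Markov property: for the conditional expectation of the tail given $X_0$, one has, for any bounded $\mathcal{F}_{\geq 1}$-measurable $\varphi$,
\begin{equation*}
\mathbb{E}_\nu[\varphi(X_1,X_2,\ldots)\mid X_0=x] = \mathbb{E}_x[\varphi(X_0,X_1,\ldots)] =: (Q\varphi)(x),
\end{equation*}
and iterating, $\mathbb{E}_\nu[\1_{\mathcal A}\mid X_0=x]=\mathbb{E}_\nu[\1_{\mathcal A}\circ\tau^n\mid X_0=x]=(P^n g)(x)$ where $g(x):=\mathbb{E}_x[\1_{\mathcal A}]$, by invariance of $\mathcal A$. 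Next I would set $h:=\mathbb{E}_\nu[\1_{\mathcal A}\mid X_0]$ and observe that the martingale $\mathbb{E}_\nu[\1_{\mathcal A}\mid \mathcal F_{\geq n}]$ converges a.s. and in $L^1$ to $\1_{\mathcal A}$ (Lévy's upward theorem, applied on sets of finite measure so the conditioning makes sense); combined with the Markov property this forces $\1_{\mathcal A}=\lim_n (P^n \tilde h)(X_n)$ for an appropriate version, and by stationarity $h(X_0)=\1_{\mathcal A}$ $\P_\nu$-a.s. Invariance $Ph=h$ then follows by applying $\mathbb{E}_\nu[\,\cdot\mid X_0]$ to $\1_{\mathcal A}=\1_{\mathcal A}\circ\tau$.

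**The main obstacle** is the measure-theoretic bookkeeping forced by $\nu$ possibly having infinite mass: $\P_\nu$ is then not a probability measure, so one cannot blithely invoke the martingale convergence theorems or conditional expectations globally. I would handle this by localising: fix a compact set $K$ with $0<\nu(K)<\infty$, work on the event $\{X_0\in K\}$ (which has finite $\P_\nu$-measure), run all the martingale and conditioning arguments there, obtain $h=h_K$ on $K$, and then check that the $h_K$ for an exhausting sequence $K_m\uparrow X$ are consistent (they are, since each is a version of $\mathbb{E}_\nu[\1_{\mathcal A}\mid X_0]$ restricted to $K_m$), yielding a globally defined $h\in L^\infty(X,\nu)$. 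A second, minor point to verify is that the resulting invariant set $A=\{h>c\}$ genuinely witnesses non-ergodicity, i.e. $\nu(A)>0$ and $\nu(X\setminus A)>0$ for some $c$; this is immediate once $h$ is shown to be non-$\nu$-a.e.-constant, which in turn follows because $\1_{\mathcal A}=h(X_0)$ $\P_\nu$-a.s. and $\1_{\mathcal A}$ was assumed non-constant on a finite-measure cylinder. With $A$ in hand, $P^*\1_A=\1_A$ contradicts $P$-ergodicity via the equivalence of $P$- and $P^*$-invariant $\sigma$-algebras cited above, and the proof is complete.
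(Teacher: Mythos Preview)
Your approach and the paper's share the same opening move: set $u(x)=\P_x(\mathcal A)$ (your $h$) and observe that $\tau$-invariance of $\mathcal A$ gives $Pu=u$. From there the paper proceeds directly rather than by contrapositive: since $\nu$ is $P$-ergodic, the bounded $P$-harmonic function $u$ is $\nu$-a.e.\ a constant $u_0$, and then for any cylinder $B_n\in\sigma(X_0,\dots,X_n)$ one computes, using $\tau^{-(n+1)}\mathcal A=\mathcal A$ and the Markov property,
\[
\P_\nu(B_n\cap\mathcal A)=\E_\nu\big[{\bf 1}_{B_n}\,\P_{X_{n+1}}(\mathcal A)\big]=u_0\,\P_\nu(B_n).
\]
Density of cylinder indicators then forces ${\bf 1}_{\mathcal A}=u_0$ $\P_\nu$-a.s. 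No martingale convergence, no localisation.

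Your contrapositive route can be made to work, but the step ``by stationarity $h(X_0)={\bf 1}_{\mathcal A}$'' is a genuine gap as written. L\'evy's theorem gives $h(X_n)\to{\bf 1}_{\mathcal A}$; stationarity then tells you only that $h(X_0)$ and ${\bf 1}_{\mathcal A}$ have the same \emph{law}, which does not by itself give a.s.\ equality. The missing ingredient is that $h(X_0)=\E_\nu[{\bf 1}_{\mathcal A}\mid X_0]$, so equality of second moments forces the conditional variance to vanish --- but this $L^2$ argument needs $\P_\nu$ finite, and your localisation to $\{X_0\in K\}$ destroys exactly the stationarity you want to invoke (the shift does not preserve that event). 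A cleaner repair is to drop the claim $h(X_0)={\bf 1}_{\mathcal A}$ altogether: you only need $h$ non-constant, and that already follows from $h(X_n)\to{\bf 1}_{\mathcal A}$ (if $h\equiv u_0$ then ${\bf 1}_{\mathcal A}\equiv u_0$). You are then left with ``bounded non-constant $P$-harmonic $\Rightarrow$ non-trivial $P$-invariant set'', which is standard but is precisely the step the paper's direct argument makes unnecessary.
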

\begin{proof}
	Let $\uA\subset X^\N$ be $\tau$-invariant, i.e. $\tau^{-1}\uA=\uA$ up to $\P_\nu$-null measure set. Let
	$$u(x)=\P_x(\uA)=\P_x((X_n)_{n\geq 0}\in \uA).
$$
	Observe that for $\nu$-a.e. $x$
	$$u(x)=\P_x(\tau^{-1}\uA)= \P_x((X_{n+1})_{n\geq 0}\in \uA)=\E_x(\P_{X_1}(\uA))=Pu(x).$$
	Thus $u$ is $P$-invariant and  since $\nu$ is $P^*$-ergodic,  $u(x)=u_0$ for some constant $u_0$.
	
	Take now $B_n$ in $\sigma(X_0, X_1,\ldots, X_n)$ -- the $\sigma$-algebra generated by the first $n+1$ coordinates. Then
	$$\P_\nu(B_n\cap \uA)=\P_\nu(B_n\cap \tau^{-(n+1)}\uA)=\E_\nu({\bf 1}_{B_n}\P_{X_{n+1}}(\uA))=u_0\P_\nu(B_n).$$
	Since the set of functions ${\bf 1}_{B_n}$ spans a dense subset of  $L^1(\P_\nu)$, we see that ${\bf 1}_{\uA}=u_0$ has to be constant, i.e. $\uA$ is $\P_\nu$ trivial. This completes the proof.
\end{proof}

In the specific context of this paper where $P$ is induced by the action of the discrete measure $\mu$ on the group of $\Hom(X)$, we can characterize invariant sets (and prove directly that $P$ and $P^*$--invariant sets coincide).
\begin{lem}\label{lem:erg-car}
			Let $P$ be the Markov operator  defined by the action of a discrete distribution $\mu$ on the group of homeomorphims of $X$  as in (\ref{e2_20.02.13}), and let $\nu$ be an invariant Radon measure. Then for any measurable set $A\subset X$ the following conditions are equivalent:
	\begin{enumerate}
		\item $\nu(A\bigtriangleup g^{-1}A)=0$ for each $g\in \Gamma$;
		\item $P^*{\bf 1}_A={\bf 1}_A$ in $L^\infty(X,\nu)$;
		\item $P{\bf 1}_A={\bf 1}_A$ in $L^\infty(X,\nu)$.
	\end{enumerate}
In particular, if  $M$ is a closed $\Gamma$-invariant set, then M is $P$-invariant for  the Markov operator~$P$.
\end{lem}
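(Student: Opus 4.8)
The plan is to prove $(1)\Leftrightarrow(3)$ by a direct computation with the explicit kernel, $(3)\Leftrightarrow(2)$ by the abstract $L^1$--contraction duality (this supplies the announced ``direct proof that $P$- and $P^*$-invariant sets coincide''), and then to read off the ``in particular'' assertion from $(1)\Leftrightarrow(3)$. For $(1)\Leftrightarrow(3)$, first rewrite $P{\bf 1}_A(x)=\sum_{g\in\Gamma}{\bf 1}_A(g(x))\mu(g)=\sum_{g\in\Gamma}{\bf 1}_{g^{-1}A}(x)\mu(g)$. Since $\sum_{g\in\Gamma}\mu(g)=1$ and each ${\bf 1}_{g^{-1}A}(x)\in\{0,1\}$, the identity $\sum_{g}\mu(g){\bf 1}_{g^{-1}A}(x)={\bf 1}_A(x)$ (whose right-hand side again lies in $\{0,1\}$) forces ${\bf 1}_{g^{-1}A}(x)={\bf 1}_A(x)$ for every $g\in\Gamma$: if ${\bf 1}_A(x)=0$ the nonnegative summands all vanish, and if ${\bf 1}_A(x)=1$ then $\sum_g\mu(g)\bigl(1-{\bf 1}_{g^{-1}A}(x)\bigr)=0$ forces every summand to be $1$; the reverse implication is trivial. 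Hence $(3)$ is equivalent to: for $\nu$-a.e.\ $x$ one has ${\bf 1}_{g^{-1}A}(x)={\bf 1}_A(x)$ for all $g\in\Gamma$, and since $\Gamma$ is countable (a countable union of $\nu$-null sets is $\nu$-null) this is in turn equivalent to $\nu(A\bigtriangleup g^{-1}A)=0$ for every $g\in\Gamma$, i.e.\ $(1)$.

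For $(3)\Leftrightarrow(2)$ I would use only that $P$ and $P^*$ are positive contractions of $L^1(X,\nu)$ also acting on $L^\infty(X,\nu)$, that $P{\bf 1}_X={\bf 1}_X$ (clear from the formula, $\mu$ being a probability), that $\nu$ is $P$-invariant, and consequently that $P^*{\bf 1}_X={\bf 1}_X$, that $\nu$ is $P^*$-invariant and $(P^*)^*=P$ (all immediate from the duality). Assume $(3)$, so $P{\bf 1}_A={\bf 1}_A$ and hence also $P{\bf 1}_{A^c}=P({\bf 1}_X-{\bf 1}_A)={\bf 1}_{A^c}$. For any measurable $B\subseteq A$ with $\nu(B)<\infty$, positivity of $P$ and ${\bf 1}_B\le{\bf 1}_A$ give
\[
\int_B P^*{\bf 1}_{A^c}\,d\nu=\int {\bf 1}_{A^c}\,P{\bf 1}_B\,d\nu\le\int{\bf 1}_{A^c}\,P{\bf 1}_A\,d\nu=\int{\bf 1}_{A^c}{\bf 1}_A\,d\nu=0,
\]
so by $\sigma$-finiteness $P^*{\bf 1}_{A^c}=0$ $\nu$-a.e.\ on $A$, and the symmetric argument (with $A^c$ in place of $A$, using $P{\bf 1}_{A^c}={\bf 1}_{A^c}$) gives $P^*{\bf 1}_A=0$ $\nu$-a.e.\ on $A^c$. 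Combining these with $P^*{\bf 1}_A+P^*{\bf 1}_{A^c}=P^*{\bf 1}_X={\bf 1}_X$ yields $P^*{\bf 1}_A={\bf 1}_A$ $\nu$-a.e., i.e.\ $(2)$; the converse $(2)\Rightarrow(3)$ is the same statement applied to $P^*$, whose adjoint is $P$.

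It remains to treat the ``in particular'' statement. If $M$ is closed and $\Gamma$-invariant then $gM\subseteq M$, equivalently $M\subseteq g^{-1}M$, for every $g\in\Gamma$, so by $(1)\Leftrightarrow(3)$ it suffices to prove $\nu(g^{-1}M\setminus M)=0$, i.e.\ $P{\bf 1}_M={\bf 1}_M$ $\nu$-a.e. The inclusion already gives $P{\bf 1}_M\ge{\bf 1}_M$ everywhere (if $x\in M$ then $g(x)\in M$ for all $g$, so $P{\bf 1}_M(x)=1$). When $\nu(M)<\infty$ this is enough, since $\nu$-invariance forces $\int P{\bf 1}_M\,d\nu=\nu(M)=\int{\bf 1}_M\,d\nu$ and hence equality $\nu$-a.e. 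In general one passes to the ergodic decomposition $\nu=\int_{\EE_\nu}\nu_e\,d\eta_\nu(e)$ and argues componentwise: if $\nu_e(M)=0$, the identity $\nu_e(M)=\sum_{h\in\Gamma}\mu(h)\nu_e(h^{-1}M)$ (a sum of nonnegative terms) forces $\nu_e(g^{-1}M)=0$; if $\nu_e(M)>0$, invoke recurrence --- so that $P$ is conservative --- together with the Chacon--Ornstein theorem: a $\nu_e$-typical $x\in M$ keeps its entire forward orbit in $M$, whence $S_n{\bf 1}_{g^{-1}M\setminus M}(x)\equiv0$ while $S_n{\bf 1}_{\I}(x)\to\infty$, which forces $\nu_e(g^{-1}M\setminus M)=0$.

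I expect the last case to be the main obstacle. The pointwise inequality $P{\bf 1}_M\ge{\bf 1}_M$ does \emph{not} by itself upgrade to an equality once $\nu(M)=\infty$: for $\mu=\delta_T$ with $T(x)=x+1$, the Lebesgue measure on $\R$ is invariant, the half-line $M=[0,\infty)$ is closed and $\Gamma$-invariant, yet $P{\bf 1}_M={\bf 1}_{[-1,\infty)}\ne{\bf 1}_M$ in $L^\infty(\nu)$. Thus one genuinely needs the standing recurrence hypothesis, which makes $P$ conservative and excludes such escaping mass; the cleanest way to exploit conservativity here is the Chacon--Ornstein ratio limit applied from points of $M$, as above.
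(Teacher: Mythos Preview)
Your equivalence $(1)\Leftrightarrow(3)$ is exactly the paper's argument. For $(2)\Leftrightarrow(3)$ you take a different route: the paper proves $(1)\Rightarrow(2)$ and $(2)\Rightarrow(1)$ directly with the explicit kernel (using invariance of $\nu$ to rewrite $\int f\,P^*{\bf 1}_A\,d\nu$ and a compact exhaustion $B_n=K_n\cap A^c$ to pull the identity back), whereas you deduce $(3)\Rightarrow(2)$ abstractly from positivity, $P^*{\bf 1}_X={\bf 1}_X$, and duality. Both are correct; your version has the virtue of not using the specific form of $P$ at all, while the paper's is more hands-on.

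On the ``in particular'' clause you are in fact more careful than the paper. The paper simply writes
\[
\sum_{g\in\Gamma}\mu(g)\,\nu(g^{-1}M\bigtriangleup M)=\sum_{g\in\Gamma}\mu(g)\bigl(\nu(g^{-1}M)-\nu(M)\bigr)=0,
\]
which tacitly assumes $\nu(M)<\infty$ (or, symmetrically, $\nu(M^c)<\infty$); your translation counterexample shows this assumption cannot be dropped, so as a standalone statement the ``in particular'' is false without some extra input such as recurrence. Your proposed fix under recurrence is sound. Two small refinements: to apply Chacon--Ornstein you need the numerator in $L^1(\nu_e)$, so replace ${\bf 1}_{g^{-1}M\setminus M}$ by ${\bf 1}_{K\cap M^c}$ for compact $K$ (the orbit of any $x\in M$ stays in $M$, so $S_n{\bf 1}_{K\cap M^c}(x)=0$, giving $\nu_e(K\cap M^c)=0$ and hence $\nu_e(M^c)=0$); and the case $\nu_e(M)=0$ needs no recurrence, since $0=\nu_e(M)=\sum_h\mu(h)\,\nu_e(h^{-1}M)$ already forces each $\nu_e(h^{-1}M)=0$. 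In the paper's applications recurrence (\Rec) is a standing hypothesis, so the gap is harmless there, but you are right that it is a gap.
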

\begin{proof}
	(1)$\Rightarrow$ (2) \\
	Suppose (1) holds. Since $\nu $ is invariant, for any $f\in L^1(\nu)$ we have
	\begin{align*}
	\int_Xf(x) P^*1_A(x)d\nu(x)=&=\int_\Gamma\int_Xf(g(x)){\bf 1}_A(x)d\nu(x)d\mu(g)\\&=\int_\Gamma\int_Xf(g(x)){\bf 1}_A(g(x))d\nu(x)d\mu(g)\\
	&=\int_Xf(x){\bf 1}_A(x)d\nu(x).
	\end{align*}	
	
	(2)$\Rightarrow$ (1) \\
Let $K_n\nearrow X$ be a sequence of increasing compact sets. Let $A^c=X\setminus A$ and $B_n:=K_n\cap A^c$.  Then, since ${\bf 1}_{B_n}\in L^1(\nu)$,
	\begin{align*}
	\sum_{g\in\Gamma}\mu(g)\nu(g^{-1}B_n\cap A)&=\int_X\int_\Gamma{\bf 1}_{B_n}(g(x)){\bf 1}_A(x)d\nu(x)d\mu(g)\\
	&=\int_X P{\bf 1}_{B_n}(x){\bf 1}_A(x)d\nu(x)\\
	&=\int_X {\bf 1}_{B_n}(x)P^*{\bf 1}_A(x)d\nu(x)\\&= \int_X {\bf 1}_{B_n}(x){\bf 1}_A(x)d\nu(x)=\nu(B_n\cap A)=0.
	\end{align*}
	Thus $\nu(g^{-1}B_n\cap A)=0$ for all $g\in \Gamma$ and
	$$\nu((g^{-1}A)^c\cap A)=\nu(g^{-1}(A^c)\cap A)=\lim_{n\to\infty}\nu(g^{-1}B_n\cap A)=0.$$
	Observe that, since $\nu$ is $P$-invariant, also $P^*{\bf 1}_{A^c}={\bf 1}_{A^c}$.
	Similarly 	$\nu(g^{-1}A\cap A^c)=0$ and finally we can conclude that $$	\nu(A\bigtriangleup g^{-1}A)=\nu(g^{-1}A\cap A^c)+\nu((g^{-1}A)^c\cap A)=0.$$
	
	(1)$\Leftrightarrow$ (3)
	Observe that $P{\bf 1}_A(x)=\sum_{g\in\Gamma}\mu(g){\bf 1}_{g^{-1}A}(x)$ thus
	$$P{\bf 1}_A={\bf 1}_A\Longleftrightarrow {\bf 1}_{g^{-1}A}={\bf 1}_A\qquad\forall g\in \Gamma$$ since $\mu(g)>0$ for all $g\in \Gamma$.

		Let $M$ be a closed $\Gamma$-invariant set. Since $M\subseteq   g^{-1}M$, we have then
	$$
	\sum_{g\in\Gamma}\mu(g)\nu(g^{-1}M\bigtriangleup M)=\sum_{g\in\Gamma}\mu(g)(\nu(g^{-1}M)-\nu(M))=0,
	$$
	by the fact that $\nu$ is invariant. Therefore, $\nu(g^{-1}M\bigtriangleup M)=0$ for all $g\in\Gamma$.
\end{proof}


\subsection{Chacon-Ornstein Ergodic Theorem for $L^1$-contractions}\label{sec: appendix CO}

The Chacon--Ornstein Ratio Ergodic Theorem is an extremely powerful and  general theorem to study the asymptotic behaviour of  the partial sums
$$S_nf:=\sum_{k=0}^nT^kf\quad \mbox{with } f\in L^1(W, \omega).$$

\begin{thm}[The Chacon--Ornstein Ergodic Theorem]
Let $T$ be a positive contraction of $L^1(W,\omega)$. Assume that the operator $T$ is \textbf{conservative}, that is, there  exists a strictly positive function $\varPhi\in  L^1(W, \omega)$ such that $\lim_{n\to\infty} S_n \varPhi(w)=+\infty$ for $\omega$-almost all $w\in W$. Then  for any $f\in L^1(\omega)$ the limit
\begin{equation}\label{eq: CO Thm conv}
Lf(w):=\lim_{n\to\infty}\frac{S_nf(w)}{S_n\varPhi(w)}\qquad \mbox{ exists and is finite for  $\omega$-a. e. $w$.}
\end{equation}
Furthermore, the function $Lf$ is invariant (i.e. measurable with respect to $\II$, the $\sigma$-algebra of all $T$--invariant sets) and
\begin{equation}\label{eq: CO Thm int}
\int Lf(x) \Phi(x)\d\omega(x)=\int f(x)\d\omega(x).
\end{equation}
\end{thm}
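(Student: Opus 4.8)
The Chacon--Ornstein Theorem is a classical result and the plan is to deduce the version stated here from the standard one available in the references (Garsia \cite{Garsia}, Foguel \cite{Foguel}, Krengel, etc.), where it is usually phrased with a dominating function $\Phi$ that is merely nonnegative with $S_n\Phi \to \infty$ on the conservative part of $W$. Our statement adds the convenient hypothesis that $T$ is conservative in the sense that there exists a \emph{strictly positive} $\Phi \in L^1(W,\omega)$ with $S_n\Phi(w) \to +\infty$ for $\omega$-a.e.\ $w$; so the first task is to check that this hypothesis places us entirely in the conservative part $C$ of the Hopf decomposition $W = C \sqcup D$. I would first recall that the dissipative part $D$ is characterised (up to $\omega$-null sets) by the property that $S_\infty h := \sum_{k\ge 0} T^k h < \infty$ a.e.\ on $D$ for every $h \in L^1$, $h\ge 0$; applying this to $h = \Phi$ and using $S_n\Phi \to +\infty$ a.e., we conclude $\omega(D) = 0$, i.e.\ $T$ is conservative in the Hopf sense. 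This is the only place the strict positivity of $\Phi$ is used.

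**Main argument.** Once we know $W = C$, the ratio limit \eqref{eq: CO Thm conv} is exactly the conclusion of the Chacon--Ornstein Ratio Ergodic Theorem in its standard form: for a conservative positive contraction $T$ of $L^1(W,\omega)$, for any $f \in L^1(\omega)$ and any $g \in L^1(\omega)$ with $g \ge 0$ and $g > 0$ a.e.\ (here $g = \Phi$), the limit $\lim_{n} S_nf(w)/S_ng(w)$ exists and is finite for $\omega$-a.e.\ $w$. So the existence part needs no new work beyond the citation and the reduction above. For the two remaining assertions I would proceed as follows. That $Lf$ is $\I$-measurable (invariant): the ratio $S_nf/S_n\Phi$ can be rewritten, after shifting the index, in a way that differs from $T$ applied to a similar ratio only by terms of the form $f/S_n\Phi$ and $\Phi/S_n\Phi$, both of which tend to $0$ a.e.\ on $C$ because the denominators blow up; passing to the limit gives $T^*$-invariance of the limit function in the appropriate sense, hence $Lf$ is measurable with respect to $\I$, the $\sigma$-algebra of invariant sets. (Alternatively one simply quotes that the Chacon--Ornstein limit is always invariant, which is part of the classical statement.)

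**The identity \eqref{eq: CO Thm int}.** For the integral formula $\int Lf \cdot \Phi \, d\omega = \int f \, d\omega$, the cleanest route is first to establish it for $f \ge 0$ and then subtract. Fix $f \ge 0$. One direction is Fatou: since $S_nf/S_n\Phi \to Lf$ a.e.\ and, by the maximal/Hopf identity, $\int (S_nf) \Phi_n\, d\omega$-type quantities are controlled, one gets $\int Lf \cdot \Phi\, d\omega \le \int f\, d\omega$ after a careful telescoping using $\int T^k f\, d\omega \le \int f\, d\omega$ and $\int T^k\Phi\, d\omega \le \int \Phi\, d\omega$. For the reverse inequality I would use the fact (standard in this circle of ideas, e.g.\ via the Chacon--Ornstein or Neveu--Chacon identification of the limit) that on the conservative part the limit $Lf$ equals $E_\omega[f \mid \I]/E_\omega[\Phi \mid \I]$ in the sense of a Radon--Nikodym derivative of the measures $A \mapsto \lim_n \int_A S_nf\,d\omega / \int_A S_n\Phi\, d\omega$ restricted to $\I$; integrating $Lf \cdot \Phi$ against an arbitrary bounded $\I$-measurable test function and using invariance collapses the ratio and yields $\int f\, d\omega$ exactly. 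Finally, for general $f \in L^1$ write $f = f^+ - f^-$ and subtract the two identities, which is legitimate because both $Lf^+\cdot\Phi$ and $Lf^-\cdot\Phi$ are in $L^1(\omega)$ by the $f\ge 0$ case.

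**Where the difficulty lies.** The existence of the ratio limit and the invariance of $Lf$ are genuinely classical and I would not reprove them; the only real content to be careful about is (i) the reduction showing the hypothesis forces full conservativity — routine but it must be stated — and (ii) the integral identity \eqref{eq: CO Thm int}, where the Fatou direction is easy but the matching lower bound requires either quoting the precise form of the Chacon--Ornstein limit as a conditional-expectation ratio or redoing a short filling-scheme/maximal-ergodic argument. I expect (ii), specifically pinning down $\int Lf\cdot\Phi\,d\omega \ge \int f\,d\omega$ for $f\ge 0$ without circularity, to be the main obstacle, and the practical resolution is to cite the identification of the Chacon--Ornstein ratio limit with $d(f\omega)|_\I / d(\Phi\omega)|_\I$ from \cite{Garsia} or \cite{Foguel} and integrate.
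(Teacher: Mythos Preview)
Your approach is correct and matches the paper's: the paper does not give an independent proof of this classical result but simply refers the reader to \cite[Theorem~2.6.1]{Garsia}. Your additional remarks on reducing the stated conservativity hypothesis to the standard Hopf-decomposition formulation and on identifying the limit as a conditional-expectation ratio are accurate elaborations, but the paper itself omits all of this and treats the theorem as a black box.
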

For a complete proof see for instance \cite[Theorem 2.6.1]{Garsia}.
The Ratio Ergodic Theorem enables us to give  another characterisation of ergodic measures:
\begin{lem}\label{lem: eerg-const lim} Let $\FF$ be a dense family in $L^1(\omega)$. An invariant measure $\omega$ is ergodic iff $Lf$ is constant for all $f\in\FF$. \end{lem}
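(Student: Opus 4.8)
The plan is to prove both implications directly from the Chacon--Ornstein Theorem together with the characterisation of ergodicity in terms of invariant sets. Fix a strictly positive $\Phi \in L^1(\omega)$ witnessing conservativity and write $Lf = \lim_n S_nf/S_n\Phi$ for the (a.e.\ defined) ratio limit; recall that $Lf$ is $\II$-measurable and $\int Lf \cdot \Phi \, d\omega = \int f \, d\omega$.

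First I would prove the ``only if'' direction. Assume $\omega$ is ergodic. For \emph{any} $f \in L^1(\omega)$, the function $Lf$ is $\II$-measurable, hence constant $\omega$-a.e.\ by ergodicity: indeed, for every $c \in \R$ the set $\{Lf > c\}$ is $T$-invariant, so has $\omega$-measure $0$ or full measure, and this forces $Lf$ to be a.e.\ constant. Evaluating the integral identity, if $Lf \equiv c_f$ then $c_f \int \Phi \, d\omega = \int f\, d\omega$, so in fact $Lf \equiv \int f\, d\omega / \int \Phi\, d\omega$. In particular $Lf$ is constant for all $f$ in the dense family $\FF$. (Note this direction does not even use density.)

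The substance is the ``if'' direction: assume $Lf$ is $\omega$-a.e.\ constant for all $f \in \FF$, and deduce ergodicity. Let $A \subset W$ be a $T$-invariant set; I must show $\omega(A) = 0$ or $\omega(W\setminus A) = 0$. The key point is that $L$ interacts well with invariant sets: since $A$ is invariant one has $S_n({\bf 1}_A g) = {\bf 1}_A \, S_n g$ for any $g$ (because $T^*{\bf 1}_A = {\bf 1}_A$ propagates through the iterates, so $T^k$ leaves functions supported on $A$ supported on $A$), and likewise $S_n({\bf 1}_A \Phi) = {\bf 1}_A S_n\Phi$. Hence on $A$ we get $L({\bf 1}_A f) = Lf$ and on $W \setminus A$ we get $L({\bf 1}_A f) = 0$; combining, $L({\bf 1}_A f) = {\bf 1}_A \cdot Lf$ for all $f\in L^1(\omega)$. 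Now suppose for contradiction that $0 < \omega(A) $ and $0 < \omega(W\setminus A)$. Pick $f \in \FF$ close in $L^1(\omega)$ to some function that distinguishes $A$ from its complement --- e.g.\ approximate ${\bf 1}_B$ for a compact $B$ with $0<\omega(B\cap A)$ and $0 < \omega(B \setminus A)$; by density such $f\in\FF$ exists with $\int_{A} f\,d\omega$ and $\int_{W\setminus A} f\, d\omega$ both positive (after possibly also approximating $\Phi$-normalised test functions). By hypothesis $Lf \equiv c_f$ constant, so $L({\bf 1}_A f) = c_f {\bf 1}_A$; applying the integral identity of the theorem to ${\bf 1}_A f \in L^1(\omega)$ gives
\[
c_f \int_A \Phi \, d\omega \;=\; \int_W L({\bf 1}_A f)\,\Phi\, d\omega \;=\; \int_W {\bf 1}_A f \, d\omega \;=\; \int_A f\, d\omega,
\]
and similarly with $A$ replaced by $W\setminus A$, so $c_f \int_{W\setminus A}\Phi\,d\omega = \int_{W\setminus A} f\, d\omega$. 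Dividing, $\int_A f\,d\omega \big/ \int_A \Phi\,d\omega = c_f = \int_{W\setminus A} f\,d\omega\big/\int_{W\setminus A}\Phi\,d\omega$; since $f$ was chosen so that these two ``conditional averages'' differ, we reach a contradiction. Hence $\omega(A)=0$ or $\omega(W\setminus A)=0$, i.e.\ $\omega$ is ergodic.

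The main obstacle I anticipate is the density argument in the ``if'' direction: one needs to pass from the hypothesis on the dense family $\FF$ to a genuine constraint separating $A$ from $W\setminus A$, and this requires care because $Lf$ depends on $f$ only through $L^1$-data but the map $f \mapsto Lf$ is not obviously $L^1$-continuous pointwise. The clean way around this is to use the \emph{integrated} identity \eqref{eq: CO Thm int} --- which is manifestly $L^1$-continuous in $f$ --- rather than the pointwise limit: one shows that for every $f \in L^1(\omega)$ the ratio $\int_A f\,d\omega / \int_A \Phi\,d\omega$ equals the common constant value, first for $f \in \FF$ and then for all $f \in L^1(\omega)$ by continuity, and finally specialises $f$ to separate $A$ from its complement. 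I would spell out this approximation step carefully; everything else is a routine manipulation of $S_n$ and the invariance relation $T^*{\bf 1}_A = {\bf 1}_A$.
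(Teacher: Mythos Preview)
Your argument is correct and follows the same route as the paper: both directions rest on the identity $L(\mathbf{1}_A f)=\mathbf{1}_A\,Lf$ for $T$-invariant $A$ (the paper cites Garsia, Prop.~2.5.6 for $T(\mathbf{1}_A f)=\mathbf{1}_A Tf$) together with the integral relation \eqref{eq: CO Thm int} and density of $\FF$. Your final paragraph is in fact the clean version of the proof and matches the paper's argument exactly---the contradiction detour with a specially chosen $f$ is unnecessary, since once you have $\omega(\mathbf{1}_A f)=c_f\,\omega_A(\varPhi)=\tfrac{\omega_A(\varPhi)}{\omega(\varPhi)}\,\omega(f)$ for all $f\in\FF$, density immediately gives $\omega_A=C\omega$ and hence $A$ is trivial.
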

\begin{proof}
	If $\omega$ is ergodic then the invariant $\sigma$-algebra is trivial and thus $Lf$ is constant. In consequence, by (\ref{eq: CO Thm int}), it is equal to $\frac{\omega(f)}{\omega(\varPhi)}$.

	Suppose now that $Lf=\frac{\omega(f)}{\omega(\varPhi)}$ is $\omega$--a.e. constant
	for any $f\in \FF$.
	Let $A$ be an invariant set. Since
	$T({\bf 1}_A f)={\bf 1}_AT f$ (see for instance \cite[Prop 2.5.6]{Garsia}), then $L({\bf 1}_A f)={\bf 1}_ALf$ $\omega$-a.e. and
	$$
	\omega({\bf 1}_A f)=\omega(\varPhi) L({\bf 1}_A f)(x)= \omega(\varPhi) {\bf 1}_A(x) Lf(x)={\bf 1}_A(x)\omega(f).
	$$ Since $f\in\FF$ is arbitrary and is  $\FF$ a dense family in $L^1(\omega)$, the set  $A$ must be trivial and we are done.	
\end{proof}

A direct consequence of the last lemma and of Lemma \ref{lem: erg nu Pnu}, in the case of Markov-Feller operator, is the following corollary that summarises some of the fundamental results needed  in our paper.
\begin{cor}\label{cor: ergodic shift} 	Let $\nu$ be an ergodic invariant Radon measure for the Markov-Feller operator $P$. Suppose that the Markov chain is recurrent, i.e. there exists a compact set $K$ such that   ${{\bf 1}_K (x_n)+\cdots +{\bf 1}_K(x_0)}\to +\infty$ $\P_\nu$-a.e. Then
	 for any nonnegative function $\phi\in L^1(X,\nu)$ we have $\nu(\phi)>0$ if and only if
	 ${\phi(_n)+\cdots +\phi(x_0)}\to +\infty$ $\P_\nu$-a.e.,
	  and in this case for all $f\in L^1(X,\nu)$:
	$$\lim_{n\to\infty}\frac{f(x_n)+\cdots +f(x_0)}{\phi(x_n)+\cdots +\phi(x_0)}=\frac{\nu(f)}{\nu(\phi)} \quad\P_\nu-\mbox{a.e.}$$
\end{cor}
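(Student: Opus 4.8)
The plan is to lift everything to the trajectory space $(X^{\N},\P_\nu)$ and read off the statement from the Chacon--Ornstein theorem applied to the shift $\tau$. Recall from the discussion preceding the statement that $\tau$ is a positive contraction of $L^1(X^{\N},\P_\nu)$, and that by Lemma~\ref{lem: erg nu Pnu} it is ergodic, since $\nu$ is $P$-ergodic. To a function $f\in L^1(X,\nu)$ I would associate its lift $\hat f(\ux):=f(x_0)$; invariance of $\nu$ gives $\hat f\in L^1(X^{\N},\P_\nu)$ with $\P_\nu(\hat f)=\nu(f)$, and because $\tau^k\ux=(x_k,x_{k+1},\dots)$ the Birkhoff sums of $\hat f$ under $\tau$ are precisely $\sum_{k=0}^n\hat f(\tau^k\ux)=\sum_{k=0}^n f(x_k)$, i.e.\ the partial sums appearing in the statement.

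The only point that needs care is to verify that $\tau$ is conservative in the sense of the Chacon--Ornstein theorem, i.e.\ that there is a strictly positive reference function. Since $\nu$ is $\sigma$-finite there exists a strictly positive $h\in L^1(X,\nu)$, and since $K$ is compact and $\nu$ is Radon one has ${\bf 1}_K\in L^1(X,\nu)$; set $\varPhi:=\widehat{h+{\bf 1}_K}$. Then $\varPhi>0$, $\varPhi\in L^1(X^{\N},\P_\nu)$, and along $\P_\nu$-almost every trajectory $\sum_{k=0}^n\varPhi(\tau^k\ux)\ge\sum_{k=0}^n{\bf 1}_K(x_k)\to+\infty$ by the recurrence hypothesis~(\Rec). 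Hence $\tau$ is conservative with reference function $\varPhi$.

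Now I would invoke the Chacon--Ornstein theorem for $T=\tau$: for every $F\in L^1(X^{\N},\P_\nu)$ the limit $LF=\lim_n\big(\sum_{k\le n}F\circ\tau^k\big)\big/\big(\sum_{k\le n}\varPhi\circ\tau^k\big)$ exists $\P_\nu$-a.e., is $\II$-measurable, and satisfies \eqref{eq: CO Thm int}. As $\P_\nu$ is ergodic, Lemma~\ref{lem: eerg-const lim} (equivalently, triviality of the invariant $\sigma$-algebra together with \eqref{eq: CO Thm int}) forces $LF$ to be the constant $\P_\nu(F)/\P_\nu(\varPhi)$. Applying this to $F=\hat\phi$ and $F=\hat f$ gives $L\hat\phi=\nu(\phi)/\P_\nu(\varPhi)$ and $L\hat f=\nu(f)/\P_\nu(\varPhi)$.

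It remains to conclude. If $\nu(\phi)>0$ then $L\hat\phi>0$, which together with $\sum_{k\le n}\varPhi\circ\tau^k\to\infty$ forces $\sum_{k=0}^n\phi(x_k)\to+\infty$ $\P_\nu$-a.e.; dividing the Birkhoff sums of $\hat f$ and of $\hat\phi$ each by those of $\varPhi$ and letting $n\to\infty$ yields
$$
\frac{\sum_{k=0}^n f(x_k)}{\sum_{k=0}^n\phi(x_k)}\ \longrightarrow\ \frac{L\hat f}{L\hat\phi}=\frac{\nu(f)}{\nu(\phi)}\qquad\P_\nu\text{-a.e.}
$$
Conversely, if $\nu(\phi)=0$ then, $\phi$ being nonnegative, $\phi=0$ $\nu$-a.e., hence $\phi(x_k)=0$ for all $k$ along $\P_\nu$-a.e.\ trajectory and $\sum_{k=0}^n\phi(x_k)\equiv0$; this establishes the asserted equivalence. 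I do not expect a real obstacle here: essentially all the content is the transfer to the shift and the verification, from~(\Rec), that the lifted indicator of $K$ (padded by a strictly positive $L^1$ function) is a valid Chacon--Ornstein reference function; the rest is a direct reading of the Chacon--Ornstein theorem and of Lemma~\ref{lem: eerg-const lim}.
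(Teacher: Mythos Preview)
Your argument is correct and follows the paper's own route: lift to the trajectory space, use Lemma~\ref{lem: erg nu Pnu} to get ergodicity of the shift, and read off the ratio limit from the Chacon--Ornstein theorem combined with Lemma~\ref{lem: eerg-const lim}. The only notable difference is in the converse direction: the paper argues that if $\sum_k\phi(x_k)\to\infty$ then the ratio $S_n\Phi/S_n\phi$ has a finite limit (invoking a result from Foguel) equal to $\nu(\Phi)/\nu(\phi)$, forcing $\nu(\phi)>0$, whereas you take the cleaner contrapositive --- if $\nu(\phi)=0$ then $\phi=0$ $\nu$-a.e., hence each $\phi(x_k)=0$ $\P_\nu$-a.e.\ by invariance --- which avoids the external citation entirely.
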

\begin{proof} Since $\P_\nu$ is $\tau$-ergodic by Lemma \ref{lem: erg nu Pnu}, applying Lemma \ref{lem: eerg-const lim} to $\underline{f}(\ux):=f(x_0)$ and $\underline{\Phi}(\ux):=\Phi(x_0)$ with $\Phi>{\bf 1}_K$, we obtain
	$$\lim_{n\to\infty}\frac{f(x_n)+\cdots +f(x_0)}{\Phi(x_n)+\cdots +\Phi(x_0)}=\lim_{n\to\infty}\frac{\underline{f}(\tau^n\ux)+\cdots +\underline{f}(\ux)}{\underline{\Phi}(\tau^n\ux)+\cdots +\underline{\Phi}(\ux)}=\frac{\P_\nu(\underline{f})}{\P_\nu(\underline{\Phi})}=\frac{\nu(f)}{\nu(\varPhi)}\quad 	\P_\nu-\mbox{a.e.} $$
Take a nonnegative function $\phi\in L^1(X,\nu)$ such that $\nu(\phi)>0$,
	  then $$S_n\phi(x)\sim\frac{\nu(\phi)}{ \nu(\Phi)}S_n\Phi(x)\to\infty.$$
	Conversely, if $\phi(x_n)+\cdots + \phi(x_0)$ tends to $\infty$, $\nu$-a.e., then, since the limit of $S_n\Phi/S_n\phi$ is finite  (see for instance \cite[Chapter III, Theorem D]{Foguel}) and equals to $\nu(\Phi)/\nu(\phi)$, by the previous step we obtain that $\nu(\phi)>0$.
\end{proof}

\subsection{Ergodic Decomposition of invariant measure}

This part of the paper is
devoted to complete proof of an ergodic decomposition for Markov--Feller processes on locally compact metric spaces. From this decomposition formula (\ref{e1_20.02.13}) will follow.
\begin{thm}\label{Thm: erg dec}
Let $(X, \mathcal B(X), \nu, P)$ be a Markov--Feller process. Assume that there exists a function $\varPhi\in C(X)\cap L^1(\nu)$, $\varPhi>0$, such that $\sum_{n=1}^{\infty} P^n\varPhi(x)=+\infty$ for all $x\in X$. Then there exists a measurable set  $X_0\subset X$ with $\nu(X\setminus X_0)=0$ such that:
\begin{itemize}
\item[1)] for every $x\in X_0$ there exists a Radon measure $\nu_x$ such that
\begin{equation}\label{eq: conv nux cont}
\nu_x(f)=\lim_{n\to\infty}\frac{S_nf(x)}{S_n\varPhi(x)}\qquad \mbox{for all $f\in C_c(X)$;}
\end{equation}
\item[2)] for every nonnegative $f\in L^1(\nu)$:
\begin{equation}\label{eq: conv nux mes}
\nu_x(f)=\lim_{n\to\infty}\frac{S_nf(x)}{S_n\varPhi(x)}\qquad \mbox{for $\nu$-a.e. $x\in X$;}
\end{equation}
thus the function $x\mapsto\nu_x(f)$ is measurable and  \begin{equation}\label{eq: erg dec}
\nu(f)=\int_X \nu_x(f) \varPhi(x)\nu(\d x).
\end{equation}
\item[3)] $\nu_x$  is invariant and ergodic for any $x\in X_0$.
\end{itemize}
\end{thm}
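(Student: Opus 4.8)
The plan is to run the Chacon--Ornstein Ergodic Theorem for the positive, conservative $L^1(\nu)$--contraction $P$ with the strictly positive reference function $\varPhi$. This produces, for every $f\in L^1(\nu)$, the $\II$--measurable ratio limit $Lf(x):=\lim_n S_nf(x)/S_n\varPhi(x)$, defined for $\nu$--a.e.\ $x$, with $L$ positive and linear, $L\varPhi=1$, $|Lf|\le L|f|$, and $\int Lf\cdot\varPhi\,\d\nu=\nu(f)$. Once $\nu_x(f)$ has been identified with $Lf(x)$, the $\nu$--a.e.\ part of (2) and the decomposition \eqref{eq: erg dec} are immediate; so the substantive tasks are: (a) to upgrade the $f$--dependent a.e.\ existence of the limit to a single conull set $X_1$ on which $\lim_n S_nf(x)/S_n\varPhi(x)$ exists for \emph{all} $f\in C_c(X)$ and is represented by a Radon measure $\nu_x$, and (b) to prove that these $\nu_x$ are $P$--invariant and ergodic. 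I shall use throughout that $X$ (being locally compact and second countable, as in our applications) has $C_c(X)$ separable in the uniform norm and $\II$ countably generated modulo $\nu$.

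For (1) I would fix an exhaustion $X=\bigcup_m K_m$ by compacts with Urysohn cut--offs $\psi_m\in C_c(X)$, $\mathbf 1_{K_m}\le\psi_m\le\mathbf 1_{K_{m+1}}$, and a countable $\mathcal D\subset C_c(X)$, closed under $\Q$--linear combinations, containing each $\psi_m$, and uniformly dense in each $\{f\in C_c(X):\supp f\subseteq K_m\}$; put $X_1:=\bigcap_{g\in\mathcal D}\{x:Lg(x)\text{ exists}\}$. Since $\varPhi\ge\varepsilon_m>0$ on $K_m$, one has $|g-g'|\le\varepsilon_m^{-1}\|g-g'\|_\infty\varPhi$ for $g,g'$ supported in $K_m$, hence $|Lg(x)-Lg'(x)|\le\varepsilon_m^{-1}\|g-g'\|_\infty$ on $X_1$, which lets me extend $g\mapsto Lg(x)$ by uniform continuity to a linear functional $\Lambda_x$ on $C_c(X)$; positivity comes from approximating $f\ge0$ by $g_k\in\mathcal D$ and passing to $g_k+r_k\psi_m\ge0$ with $r_k\in\Q$, $\|f-g_k\|_\infty\le r_k\downarrow0$. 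The Riesz representation theorem then furnishes the Radon measure $\nu_x$ with $\Lambda_x(f)=\nu_x(f)$, and a final comparison of $S_n(f-g_k)/S_n\varPhi$ with $S_n\psi_m/S_n\varPhi$ yields $\lim_n S_nf(x)/S_n\varPhi(x)=\nu_x(f)$ for every $f\in C_c(X)$ and every $x\in X_1$, i.e.\ \eqref{eq: conv nux cont}; in particular $\nu_x(g)=Lg(x)$ for $g\in\mathcal D$, $x\in X_1$.

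For (2): for $f\in C_c(X)$ the Chacon--Ornstein limit exists $\nu$--a.e.\ and there equals $\nu_x(f)$, so $\nu_x(f)=Lf(x)$ $\nu$--a.e., and with $\int Lf\varPhi\,\d\nu=\nu(f)$ this is \eqref{eq: erg dec} on $C_c(X)$. I would then push \eqref{eq: conv nux mes} and \eqref{eq: erg dec} from $C_c(X)$ to indicators of precompact open sets by monotone convergence (using $Lg_k\uparrow$, the integral identity, and the equivalence of $\varPhi\nu$ with $\nu$), then to all Borel sets by a Dynkin--class argument, then to nonnegative $f\in L^1(\nu)$ by monotone convergence; this also gives measurability of $x\mapsto\nu_x(f)$. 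The same bookkeeping yields two facts needed afterwards: $\nu_x(\varPhi)=1$ for $\nu$--a.e.\ $x$, and every $\nu$--null Borel set is $\nu_x$--null for $\nu$--a.e.\ $x$ — the device by which $\nu$--a.e.\ statements get transferred to $\nu_x$--a.e.\ statements.

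For (3), invariance follows from the telescoping identity $S_nf-S_n(Pf)=f-P^{n+1}f$, which is bounded in $n$ for $f\in C_c(X)$, so dividing by $S_n\varPhi\to\infty$ gives $\lim_n S_n(Pf)(x)/S_n\varPhi(x)=\nu_x(f)$ on $X_1$; approximating $Pf\in C(X)$ from below by $(Pf)\psi_m\in C_c(X)$ and combining monotone convergence with $\nu(Pf)=\nu(f)$ forces $\nu_x(Pf)=\nu_x(f)$ for $\nu$--a.e.\ $x$, all $f\in C_c(X)$, i.e.\ $\nu_x$ is $P$--invariant (a standard monotone--class upgrade from $C_c(X)$). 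For ergodicity I would apply the Chacon--Ornstein theorem and Lemma \ref{lem: eerg-const lim} to the conservative system $(X,\nu_x,P)$ (conservative at every point by hypothesis, $\varPhi\in L^1(\nu_x)$ since $\nu_x(\varPhi)=1$): $\nu_x$ is ergodic iff the ratio limit $L^{(\nu_x)}g$ is $\nu_x$--a.e.\ constant for $g\in\mathcal D$; but $L^{(\nu_x)}g$ is the \emph{same} ratio limit, equal on the $\nu_x$--conull set $X_1$ to $y\mapsto\nu_y(g)=Lg(y)$, which is $\II$--measurable, while from $L(\mathbf 1_A h)=\mathbf 1_A Lh$ for $\nu$--invariant $A$ (as in the proof of Lemma \ref{lem: eerg-const lim}) one gets $\nu_x(A)=\mathbf 1_A(x)\nu_x(X)$, so $\nu_x$ is, for $\nu$--a.e.\ $x$, concentrated on the single atom of $\II$ through $x$, on which $Lg$ is constant; hence $L^{(\nu_x)}g$ is $\nu_x$--a.e.\ constant and $\nu_x$ is ergodic. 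Taking $X_0\subseteq X_1$ to be the conull, measurable set of $x$ for which $\nu_x$ is invariant, ergodic and $\nu_x(\varPhi)=1$ finishes the proof. The hard part will be exactly this ergodicity step: a $\nu_x$--invariant set need not coincide $\nu_x$--a.e.\ with a $\nu$--invariant one, so one cannot argue directly inside $\II$, and it is the ``ratio limit is constant'' criterion of Lemma \ref{lem: eerg-const lim}, together with the systematic passage from $\nu$--a.e.\ to $\nu_x$--a.e.\ assertions via the disintegration of null sets, that makes the step go through.
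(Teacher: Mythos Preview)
Your proof is correct and follows essentially the same architecture as the paper's: a countable dense family $\mathcal D\subset C_c(X)$ together with the comparison $|f-h|\le C\varPhi$ to pass from $\mathcal D$ to all of $C_c(X)$ and invoke Riesz (Step~I in the paper); a monotone--class extension from $C_c(X)$ to $L^1(\nu)$ for part~(2) (Step~II); and the telescoping identity $S_nPf-S_nf=P^{n+1}f-f$ for invariance (Step~III). The only substantive divergence is in the ergodicity argument.

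For ergodicity the paper does not go through atoms of $\II$ or countable generation. Instead it uses the multiplicative identity $L(f\cdot Lg)=Lf\cdot Lg$ (a consequence of $P(h\cdot Lg)=(Ph)\cdot Lg$ for $P$--invariant $Lg$), which together with part~(2) gives $\nu_x(f\,Lg)=\nu_x(f)\,\nu_x(g)$ for all $f,g$ in the countable family $\FF$, for $\nu$--a.e.\ $x$. Varying $f\in\FF$ (hence, by density, $f\in C_c(X)$) and applying Riesz yields $Lg(y)=\nu_x(g)$ for $\nu_x$--a.e.\ $y$, so Lemma~\ref{lem: eerg-const lim} applied to $(X,\nu_x,P)$ gives ergodicity. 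This is exactly the constancy of the ratio limit you are after, obtained \emph{without} assuming $\II$ countably generated mod $\nu$; your atom argument is valid but imports that extra hypothesis, so the paper's route is slightly more general and also shorter. Note also that your formula $\nu_x(A)=\mathbf 1_A(x)\,\nu_x(X)$ should be read in the equivalent probability measure $\varPhi\,\nu_x$ (since $\nu_x(X)$ may be infinite); the paper sidesteps this by working with $\nu_x(f\,Lg)$ for $f\in C_c(X)$ throughout.
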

 Although the above  result has been used by several authors and  belong to the folklore of the field, we are not aware of any explicit reference in the literature.  In our understanding, in the specific case of  Radon measures invariant under the action of a countable group,  the ergodic decomposition could be deduced (with some work) from the paper of Greschonig and Schmidt \cite[Theorem 1.4]{Greschonig:Schmidt}. However, since  their approach does not seem to apply to more general Markov--Feller processes,  we present here  an independent proof. This can be interesting  in view of the future development of  Stochastic Dynamical Systems induced by transformation $g_i$ that are not invertible or  not countably generated.\\
We would also like to mention that in  the ergodic decomposition obtained in the previous theorem, the set of ergodic measures $\nu_x$ \textit{depends} on the measure $\nu$. In this sense our result is weaker than the one proved in \cite{Greschonig:Schmidt}, where the authors  acquire the existence of the set of quasi-invariant ergodic measures that depends only on the group action.

\medskip

\begin{proof}

First observe that since $X$ is a locally compact metric space, there exist a countable increasing family of compact sets $(K _i)_{i\in\N}$ such that $K _i\nearrow X$ and a countable family of continuous functions $\mathcal F\subset C_C(X)$ dense in    the space $C_c(X)$ (with the supremum norm) and such that 
if the support of $f$ is contained in $K _i$, then for every $\varepsilon>0$ there exists $h\in\FF$ such that
\begin{equation}\label{eq: family FF1}
\|f-h\|_\infty<\varepsilon\mbox{ and  } \supp h\subset K_ {i+1}.
\end{equation}
Thus,  for every $f\in C_c(X)$ and $\delta>0$, there exists $h\in \FF$ such that
$$ |f(x)-h(x)|<\delta\ \Phi(x) \qquad \mbox{ for all } x\in X.$$
Indeed, since $C_{i+1}=\inf_{x\in K_{i+1}} \Phi(x)>0$, it suffices  to take $h\in \FF$ such that (\ref{eq: family FF1}) holds for $\varepsilon=\delta/C_{i+1}$.

We will split the proof into four steps.
\vskip3mm
{\bf Step I.} 
\textit{We are going to define measures $\nu_x$ for $\nu$-almost all $x\in X$ and to prove 1). } \\
Let $  X_1$ be the set of all $x\in X$ such that
$$
Lh(x):=\lim_{n\to\infty}\frac{S_nh(x)}{S_n\varPhi(x)}\qquad \mbox{exists for all $h\in\mathcal F$.}
$$
Since  $\FF\subset L^1(\nu)$ is countable, by the Chacon-Orstein Theorem, $\nu (X\setminus X_1)=0$.

 We shall prove that if $x\in  X_1$, then the above limit exists for an arbitrary $f\in C_c(X)$. For this purpose  we check that the sequence
$(\frac{S_nf(x)}{S_n\varPhi(x)})_{n\in\N}$ for $f\in C_c(X)$ and $x\in  X_1$ satisfies the Cauchy condition.
Fix $f\in C_c(X)$ and $\varepsilon>0$. Let $h\in\mathcal F$ be such that $ \|f-h\|_{\infty}<(\varepsilon/3) \ \Phi.$
Then, we have
$$
\begin{aligned}
\left|\frac{S_nf(x)}{S_n\varPhi(x)}-\frac{S_mf(x)}{S_m\varPhi(x)}\right|&\le
\frac{S_n|h-f|(x)}{S_n\varPhi(x)}+ \frac{S_m|h-f|(x)}{S_m\varPhi(x)}
+\left|\frac{S_nh(x)}{S_n\varPhi(x)}-\frac{S_mh(x)}{S_m\varPhi(x)}\right|\\
&\le \frac{\varepsilon}{3}+\frac{\varepsilon}{3}+\frac{\varepsilon}{3}=\varepsilon,
\end{aligned}
$$
for all $m, n\in\mathbb N$ sufficiently large. Since $\varepsilon>0$ was arbitrary, the Cauchy condition is verified.

Define now, for any $x\in  X_1$, the functional on $C_c(X)$ by the formula:
$$
f\mapsto Lf(x)=\lim_{n\to\infty}\frac{S_nf(x)}{S_n\varPhi(x)}\qquad \mbox{for $f\in C_c(X)$}.
$$
Since this is a positive linear functional, it is represented by some regular measure $\nu_x$, i.e. $Lf(x)=\nu_x(f)$ for  $f\in C_c(X)$, by the Riesz-Markov-Kakutani 
Representation Theorem. Obviously, $\nu_x$ is a Radon measure. This proves (\ref{eq: conv nux cont}) for all $x\in X_0\subseteq X_1$ of full measure.

\vskip3mm
{\bf Step II.} \textit{ We shall check that for any $f\in L^{1}(\nu)$ we have:}
\textit{\begin{equation}\label{eq: nux vs Lx}
	\nu_x(f)=Lf(x)\quad\mbox{for $\nu$--almost all $x\in X$} \end{equation}
and prove 2).}	
By (\ref{eq: conv nux cont}), we already know that the latter equality is true for all $f\in C_c(X)$. We are going to prove that by a continuity argument it can be extended to all functions $f\in L^1(\nu)$. However, observe that if the function is not continuous,  the set of $x$ where (\ref{eq: nux vs Lx}) holds may depend on $f$.	

Let $g_n$, $n\in\N$, be a nonincreasing family of nonnegative measurable functions such that $g_n\searrow 0$ in $L^1(\nu)$. Then $Lg_n(x)\searrow 0$ for $\nu$-a.e $x\in X$. In  fact, since the   operator $L$ is  positive and  $Lg_n$ is a nonincreasing sequence of measurable functions,  the limit
$\overline{g}(x):=\lim_{n\to\infty}Lg_n(x)$
exists for $\nu$-a.e. $x$ and is nonnegative. Furthermore,
\begin{align*}
\int_X \overline{g}(x)\Phi(x)\nu(\d x)
&=\int_X\lim_{n\to\infty}Lg_n(x)\varPhi(x)\nu(\d x)\\
&\le \liminf_{n\to\infty}\int_X Lg_n(x)\varPhi(x)\nu(\d x) \quad\mbox{by Fatou's Lemma}\\
&=  \liminf_{n\to\infty}\int_X g_n(x)\nu(\d x) \quad\mbox{by Chacon-Orstein Theorem}\\
&=0.
\end{align*}
Thus $0=\overline{g}(x):=\lim_{n\to\infty}Lg_n(x)$ for $\nu$-a.e $x$.

Let consider the class of function:
 $$\HH:=\left\{f\mbox{ bounded mesurable function on $X$}\ |\ \nu_x(f\varPhi)=L(f\varPhi)(x)\quad \mbox{$\nu$-a.s.}\right\}.$$
Observe that: \begin{itemize}
	\item \textit{If $f_n\in \HH$  is a family of nonnegative and increasing bounded functions converging to $f$,
	then $f\in\HH$.} In fact, since $f_n\varPhi\nearrow f\varPhi$ pointwise and in $L^1(\nu)$:
\begin{align*}
\nu_x(f\varPhi)&=\lim_{n\to\infty}\nu_x(f_n\varPhi) \quad\mbox{by the Monotone Convergence Theorem}\\
&=\lim_{n\to\infty}L(f_n\varPhi)(x)\quad\mbox{since $f_n\in\HH$}\\
&=\lim_{n\to\infty}[L(f\varPhi)(x)-L((f-f_n)\varPhi)(x)] \quad\mbox{by linearity of $L$}\\
&=L(f\varPhi)(x)-\lim_{n\to\infty}L((f-f_n)\varPhi)(x)=L(f\varPhi)(x) \quad\mbox{}
\end{align*}
since  $g_n=(f-f_n)\varPhi\searrow 0 $ pointwise and in $L^1(\nu)$, by the Dominated Convergence Theorem.
\item \textit{If $U$ is an open subset of $X$, then ${\bf 1}_U\in\HH$.} In fact, there exists a nondecreasing sequence of nonnegative functions $f_n\in C_c(X)$ such that $f_n\nearrow {\bf 1}_U$ pointwise.  Since $f_n\varPhi\in C_c(x)$, step I yields $f_n\in \HH$, and consequently ${\bf 1}_U\in \HH$.
\item \textit{If $f, g \in \HH$, then $f + g$ and $cf$ are in $\HH$ for any real number c.} This is a direct consequence of linearity of $\nu_x$ and $L$.
\end{itemize}
Applying the Monotone Class Theorem for functions (see for  instance \cite[Thm. 5.2.2]{Durrett}), $\HH$ contains all measurable bounded functions.

Take now a nonnegative $f\in L^1(\nu)$ and an increasing sequence of compact sets $K_n\nearrow X$. Observe that
$$f_n(x):=\frac{f(x)\wedge n}{\varPhi(x)} \quad\mbox{for $x\in K_n,$}$$
and $f_n(x)=0$ otherwise. It is  easy to check that $f_n$ are bounded and $f_n\varPhi\nearrow f$, both pointwise and in $L^1(\nu)$, thus, following the same reasoning as above, we obtain
\begin{align*}
\nu_x(f)&=\lim_{n\to\infty}\nu_x(f_n\varPhi) \quad\mbox{by the Monotone Convergence Theorem}\\
&=\lim_{n\to\infty}L(f_n\varPhi)(x)\quad\mbox{since $f_n\in\HH$}\\
&=\lim_{n\to\infty}L(f)(x)-L(f-f_n\varPhi)(x) \quad\mbox{by linearity of $L$}\\
&=Lf(x),\end{align*}
since $g_n=f-f_n\varPhi\searrow 0 $ pointwise and in $L^1(\nu).$ This completes, invoking (\ref{eq: CO Thm int}), the proof of 2).

\textbf{Step III. } \textit{We are going to prove that there exists a set of full measure $X_2$ such that $\nu_x$ is $P$-invariant for all $x\in X_2$. }
Let $X_2$ be the set of all $x\in X_1$ such that:
\begin{enumerate}
	\item $\nu_x(f)=Lf(x)$ for all $f\in\FF$;
	\item $\nu_x(Pf)=L(Pf)(x)$ for all $f\in\FF$;
	\item $\nu_x(\varPhi)=L\varPhi(x)$ and $\nu_x(P\varPhi)=L(P\varPhi)(x)$.
\end{enumerate}
Since $\FF$ is countable and the desired equalities hold $\nu$-a.e., $\nu(X\setminus X_2) =0$.

Observe now that for every  $f\in C_c(X)$ and $x\in X_2$
$$Lf(x)=\lim_{n\to \infty }\frac{S_n Pf(x)}{S_n\varPhi(x)}
= \lim_{n\to \infty }\frac{S_n f(x) +P^{n+1}f(x)-f(x)}{S_n\varPhi(x)}=\lim_{n\to \infty }\frac{S_n f(x) }{S_n\varPhi(x)}=L(Pf)(x),$$
since $f$ and $Pf$ are bounded and $S_n\varPhi\to\infty$. Thus, if $x\in X_2$ and $f\in \FF$, we have
$$\nu_x(f)=Lf(x)=LPf(x)=\nu_x(Pf).$$

Fix $f\in C_c(X)$ and $\varepsilon>0$. Let $h\in \FF$ be
such that $\|f-h\|_{\infty}\leq\varepsilon\varPhi$. Thus  $P|f-h|\leq\varepsilon P\varPhi$.
Then it follows that
\begin{align*}
|\nu_x(Pf)-\nu_x(f)|&\leq |\nu_x(Pf)-\nu_x(Ph)|+|\nu_x(Ph)-\nu_x(h)|+|\nu_x(f)-\nu_x(h)|\\
&=\varepsilon\nu_x(P\varPhi)+0+\varepsilon\nu_x(\varPhi)\\
&=\varepsilon(L(P\varPhi)(x)+L\varPhi(x))\\
&=2\varepsilon \quad\mbox{ since $L(P\varPhi)(x)=L\varPhi(x)=1$}.
\end{align*}
Letting $\varepsilon\to 0$ we obtain that
 $\nu_x(Pf)=\nu_x(f)$ for all $f\in C_c(X)$. Thus $\nu_x$ is $P$--invariant.

\textbf{Step IV.}
{\it We are going to prove that there exists a set of full measure $X_3\subset X_2$ such that $\nu_x$ is ergodic for all $x\in X_3.$}

Take $f\in C_c(X)$ and  observe that $Lf$ is bounded (since $f\leq C\cdot \varPhi$ for some constant $C$, thus $Lf\leq C L\varPhi=C$) and, by the Chacon-Ornstein Theorem, invariant. By  \cite[Prop 2.5.6]{Garsia}, $P(gLf)=(Pg)\ (Lf)$ for any $g\in L^1(\nu)$ and thus $L(gLf)= (Lg)\ (Lf)$. In particular  for $\nu$-almost every $x$
$$
\nu_x(f  Lg)\stackrel{(\ref{eq: conv nux mes})}{=}L(f Lg)(x)=Lf(x) Lg(x)\stackrel{(\ref{eq: conv nux mes})}{=}\nu_x(f)\nu_x(g).
$$
Let $X_3\subseteq X_2$ be the set of all $x$ such the latter equality holds for all $f,g\in \FF$. Since $\FF$ is countable $\nu(X\setminus X_3)=0$. Take $x\in X_3$  and fix $g\in\FF$. Then $$\nu_x(f  Lg) =\nu_x(f)\nu_x(g) \quad\text{for all}\,\,f\in \FF.$$
Since $\FF$ is dense in $L^1(\nu)$, it follows that $Lg(y)=\nu_x(g)$ for $\nu_x$-almost all $y\in X$. Thus $\nu_x$ is an invariant measure such that $Lg$ is $\nu_x$-a.e. constant  and $\nu_x$ is then ergodic, by Lemma \ref{lem: eerg-const lim} applied to $\nu=\nu_x$.
The proof is completed.
\end{proof}

\noindent {\bf Acknowledgement}.
D.B. was partially supported by the National Science Center, Poland (Sonata Bis, grant number DEC-2014/14/E/ST1/00588).


\begin{thebibliography}{99}

\bibitem{alseda_misiurewicz} L. Alsed\`{a} and M. Misiurewicz, {\it Random interval homeomorphisms}, Publ. Mat. {\bf 58} (2014), 15--36.




 \bibitem{Alsmeyer}G. Alsmeyer, \textit{On the stationary tail index of iterated random Lipschitz functions},
Stochastic Process. Appl.  \textbf{126} (2016),  no. 1,  209--233.	

\bibitem{Alsmeyer:Brofferio:Buraczewski}
G. Alsmeyer, S. Brofferio,  D. Buraczewski, {\it
Asymptotically linear iterated function systems on the real line.} In  preparation.

\bibitem{Avila_Viana}
A. Avila and M. Viana, {\it Extremal {L}yapunov exponents: an invariance principle and applications},
Inventiones Mathematicae, \textbf{181} (2010), no. 1, 115--178.

\bibitem{Babillot:Bougerol:Elie}
M. Babillot, Ph. Bougerol and L. Elie, {
The random difference equation $X_n= A_nX_{n-1}+ B_n $ in the critical case}, Ann. Probab. {\bf 25} (1997), no. 1, 478--493.

\bibitem{Benda} M. Benda, Schwach kontraktive dynamische Systeme, Ph.D. Thesis, Ludwig-Maximilians-
Universit\"at M\"unchen, 1998.

\bibitem{Brofferio}
S. Brofferio, {\it How a centred random walk on the affine group goes to infinity}, Ann. Inst. H. Poincar\'e,  {\bf 39} (2003), no. 3, 371--384.

\bibitem{Brofferio:Buraczewski}
S. Brofferio,  D. Buraczewski, {\it On unbounded invariant measures of stochastic dynamical systems}, Ann. Probab.  {\bf 41} (2015), no. 3, 1456--1492.


\bibitem{buraczewski2016stochastic}
D.~Buraczewski, E.~Damek and T.~Mikosch,
{\it Stochastic models with power-law tails. The equation $X=AX+B$},
Springer Series in Operations Research and Financial Engineering.
  Springer, 2016.


\bibitem{Czudek_Szarek}
K. Czudek, T. Szarek, {\it Ergodicity and central limit theorem for random interval homeomorphisms}, Israel J. Math. (2020), https://doi.org/10.1007/s11856-020-2046-4.

\bibitem{Chacon:Ornstein}
R.V.~Chacon, D.S.~Ornstein. {\it A general ergodic theorem,} Illinois J. Math. {\bf 4} (1960),
153--160.

\bibitem{Choquet:Deny} G. Choquet, J. Deny.  \textit{ Sur l'équation de convolution $\mu=\mu*\sigma$}, C. R. Acad. Sc. Paris, {\bf 250} (1960), 799--801.

\bibitem{Deroin:Kleptsyn:Navas:Parwani:2013} B. Deroin, V. Kleptsyn, A. Navas and K. Parwani, {\it Symmetric Random walk on $\textup{HOMEO}^+(\mathbb R)$}, Ann. Probab. {\bf 41} (2013), no. 3B, 2066--2089.



\bibitem{Diaconis:Freedman} P. Diaconis and D. Freedman, \textit{Iterated random functions}, SIAM Rev. \textbf{41} (1999),
no. 1, 45--76.

\bibitem{Durrett} R. Durrett,  \textit{Probability— Theory and Examples},
Fifth edition.
Cambridge Series in Statistical and Probabilistic Mathematics, 49. Cambridge University Press, Cambridge,  2019. xii+419 pp. ISBN: 978-1-108-47368-2.


\bibitem{Foguel} S. R. Foguel, {\it The Ergodic Theory of Markov Processes}, Van Nostrand Mathematical Studies, No. 21. Van Nostrand Reinhold Co., New York-Toronto, Ont.-London, 1969

\bibitem{Furstenberg} H. Furstenberg, {\it Noncommuting random products}, Trans. Amer. Math. Soc. {\bf 108}
(1963), 377--428.
	
\bibitem{Garsia} A. M. Garsia,  \textit{Topics in Almost Everywhere Lonvergence.}
Lectures in Advanced Mathematics, 4 Markham Publishing Co., Chicago, Ill.  1970 {\rm x}+154 pp.

\bibitem{Gharaei_Homburg} M. Gharaei, A.I. Homburg, {\it Random interval diffeomorphisms,} Discrete Contin. Dyn. Syst. Ser. S {\bf 10} (2017), no. 2, 241--272.

\bibitem{Goldie} C.M. Goldie,  \textit{ Implicit renewal theory and tails of solutions of random equations},
Ann. Appl. Probab.  {\bf 1}  (1991),  no. 1, 126--166.

\bibitem{Greschonig:Schmidt} G. Greschonig,  K. Schmidt,
\textit{Ergodic decomposition of quasi-invariant probability measures},
Colloq. Math. {\bf 84/85} (2000), part 2, 495--514.

\bibitem{Letac} G. Letac, \textit{A contraction principle for certain Markov chains and its applications}, in:
Random Matrices and Their Applications (Brunswick, ME, 1984), Contemp. Math.
\textbf{50}, Amer. Math. Soc., Providence, RI, 1986, 263-273.

\bibitem{Lin70}
M. Lin,  {\it Conservative Markov processes on a topological space},  Israel J. Math., {\bf 8} (1970), 165--186.

\bibitem{malicet} D. Malicet, {\it Random walks on Homeo}$(S^1)$, Commun. Math. Phys. {\bf 356} (2017), no. 3, 1083--1116.

\bibitem{MT}S.  Meyn and R. L. Tweedie, {\it Markov Chains and Stochastic Stability},
Second edition.
With a prologue by Peter W. Glynn.
Cambridge University Press, Cambridge,  2009. xxviii+594 pp. ISBN: 978-0-521-73182-9.


\bibitem{Peigne:Woess1}
M. Peign\'e, W. Woess,  {\it Stochastic dynamical systems with weak contractivity properties I. Strong and local contractivity,}
Colloq. Math. {\bf 125} (2011), no. 1, 31--54.

\bibitem{Peigne:Woess2}
M. Peign\'e, W. Woess,  {\it Stochastic dynamical systems with weak contractivity properties II. Iteration of Lipschitz mappings,}
Colloq. Math. {\bf 125} (2011), no. 1, 55--81.

\bibitem{Pene:Thomine}
F. P\`ene, D. Thomine,  {\it Probabilistic potential theory and induction of dynamical systems,}
https://arxiv.org/abs/1909.05518.

\bibitem{Revuz} D. Revuz,  \textit{Markov Chains.}
Second edition.
North-Holland Mathematical Library, 11. North-Holland Publishing Co., Amsterdam,  1984. {\rm xi}+374 pp. ISBN: 0-444-86400-8.

\bibitem{zweimueller} R. Zweim\"uller, \textit{Survey Notes on Infinite Ergodic Theory}\\
https://mat.univie.ac.at/~zweimueller/MyPub/SurreyNotes.pdf.


\end{thebibliography}
\end{document}